\newtheorem{theorem}{Theorem}
\newtheorem{lemma}{Lemma}
\newtheorem{prop}{Proposition}
\theoremstyle{definition}
\newtheorem{definition}{Definition}
\newtheorem{remark}{Remark}
\theoremstyle{plain}
\newtoks\thehProclaim
\newtheorem*{Proclaim}{\the\thehProclaim}
\theoremstyle{definition} \newtoks{\thehRemark}
\newtheorem*{Remark}{\the\thehRemark} 
 \renewcommand{\geq}{\geqslant}
\title{The Gelfand-Tsetlin type   base for the algebra $\mathfrak{g}_2$. }
\author{D.~V.~Artamonov}  
\begin{document}

 \maketitle

\begin{abstract}
The paper presents a construction of finite-dimensional irreducible representations of the Lie algebra $\mathfrak{g}_2$. The representation space is constructed as the space of solutions to a certain system of partial differential equations of hypergeometric type, which is closely related to the Gelfand-Kapranov-Zelevinsky systems. This connection allows for the construction of a basis in the representation. The orthogonalization of the constructed basis with respect to the invariant scalar product turns out to be a Gelfand-Tsetlin-type basis for the chain of subalgebras $\mathfrak{g}_2 \supset \mathfrak{sl}_3$.
\end{abstract}
\maketitle

\section{Introduction}

\subsection{Construction of representations}

A basic  problem in the theory of Lie algebra representations is the explicit construction of irreducible finite-dimensional representations and bases in them. The solution of this problem usually involves the following steps.

 First, a general construction of the representation space and the Lie algebra action on it is provided.

 Second, a basis is explicitly constructed in the representation space, and if possible, the Lie algebra action on the basis vectors is described.

 When discussing these steps, it is reasonable to limit ourselves to a specific class of Lie algebras. The basic case is that of simple Lie algebras. Of course, the case of $A$-series algebras is the most developed. But even in this case, there are several completely different approaches to solving this problem: the standard Gelfand-Zelevinsky bases \cite{gz}, the approach using essential signatures of Vinberg \cite{vb}, crystal bases \cite{k}, numerous geometric constructions \cite{gc}, the Gelfand-Tsetlin construction \cite{gt}.

Many of these constructions can be extended, with varying degrees of difficulty, to other classical series of simple Lie algebras and even to exceptional Lie algebras (see, for example, \cite{go}, where this is done for a construction originating from \cite{vb}).

 The Gelfand-Zetlin construction is perhaps more difficult to extend to other series than the others (see \cite{m}, which considers the case of the $B$, $C$, and $D$ series).

At the same time, it is the Gelfand-Tsetlin type bases that are most useful in applications in quantum physics. The fact is that the basis vectors in the Gelfand-Tsetlin type basis are constructed as eigenvectors of some operators, which in the language of quantum mechanics means that states are constructed that have definite values of some observables.

In the present paper, we consider the case of the Lie algebra $\mathfrak{g}_2$. The algebra $\mathfrak{g}_2$ arises as the algebra of observables in particle physics \cite{si}. There are papers \cite{sst}, \cite{sst1}, \cite{br}, \cite{br2}, and \cite{fls} where the authors construct Gelfand-Tsetlin-type bases for the case of $\mathfrak{g}_2$. One must note that in  these constructions different chains of subalgebras are used.

In the present paper, we also construct a Gelfand-Tsetlin-type basis for the algebra $\mathfrak{g}_2$ using the ideas from \cite{a1} and \cite{a2}. This construction reveals deep relations with the theory of hypergeometric functions, making it easy to find the action of the generators of $\mathfrak{g}_2$ on the basis vectors. It also opens up opportunities for studying tensor product decompositions.

Our program for constructing a Gelfand-Tsetlin-type basis is as follows. First, we construct the representation space and the action of the algebra $\mathfrak{g}_2$ on it. The representation space is constructed as the space of polynomial solutions of a system of partial differential equations.
Then, we construct a basis in the space of solutions, which automatically provides a basis for the representation. The construction of basis solutions is of an analytical nature. Remarkably, in these analytical  considerations, the construction of a basis solution depends on the choice of a basis in some lattice, but at the same time, the arbitrariness of the choice of a basis in the lattice is reduced to the arbitrariness of the choice of an order on some set of indices. This choice of order is analogous to the choice of a chain of subalgebras in the classical Gelfand-Tsetlin construction.

Despite this deep analogy, the constructed basis solutions are not Gelfand-Tsetlin-type vectors. However, there is a close relationship between the constructed basis solutions and the Gelfand-Tsetlin vectors. The Gelfand-Tsetlin basis is obtained from the basis solutions by orthogonalization with respect to a natural order.

In the analytical construction of the basis solutions, the mentioned choice of the basis in the lattice allows us to reduce the original system of partial differential equations to a hypergeometric type system, which is a deformation of the Gelfand-Kapranov-Zelevinsky hypergeometric system \cite{GG}.

The constructed realization of the finite-dimensional irreducible representation $\mathfrak{g}_2$ is called the A-GKZ realization in the work, by analogy with \cite{a1}, \cite{a2}. An important advantage of this realization is that it has an explicitly written invariant scalar product. This fact plays a key role in establishing a connection between the constructed basis solutions and the Gelfand-Tsetlin basis vectors. In addition, this product turns out to be useful in other problems, and using it, in \cite{a3}, \cite{a4}, in the case of $\mathfrak{gl}_3$, it is possible to obtain explicit and simple formulas for arbitrary $3j$ and $6j$ symbols for this algebra.

\subsection{The structure of the paper}

Section \ref{vvod} is introductory in nature. It recalls the definitions of the algebra $\mathfrak{g}_2$ and the group $G_2$, as well as facts about their finite-dimensional representations. It then recalls the construction of the functional realization of the representations. This section also contains a new result, explicitly presenting the generators of the ideal of relations between minors on the group $G_2$ constructed from $k$ arbitrary columns and the first $k$ consecutive rows.

In section \ref{agkz}, the Gelfand-Zetlin lattice for $\mathfrak{g}_2$ is introduced, and systems of partial differential equations are constructed based on it, which are called the GKZ (Gelfand-Kapranov-Zelevinsky) and A-GKZ (antisymmetrized GKZ) systems. Bases are constructed in the spaces of polynomial solutions for these systems.

In the first key section \ref{sol}, it is shown that the solution space of the A-GKZ system, equipped with a natural action of $\mathfrak{g}_2$, is a direct sum of all finite-dimensional irreducible representations, taken with multiplicity $1$. Conditions are provided that identify irreducible representations. This leads to a new realization of finite-dimensional irreducible representations, which is referred to as the A-GKZ realization in this paper. The basis solutions of the A-GKZ system are shown to be the basis vectors of the representations in the A-GKZ realization. It is also shown that the basis solutions of the GKZ system define the basis vectors in the functional realization.

The second key section \ref{solgc} explores the connection of the constructed basis in the realization of representations using the A-GKZ system and some Gelfand-Tsetlin-type basis. Also, in the introductory part of this section, there is a discussion of general approaches to the construction of the Gelfand-Tsetlin-type basis, including a new approach based on the A-GKZ system, which was implemented for classical series in \cite{a1}, \cite{a2}.

\section{ The Lie group $G_2$  and the Lie algebra  $\mathfrak{g}_2$. A functional representation of representations}

\subsection{ The Lie algebra $\mathfrak{g}_2$} \label{vvod}
\label{algbr}

The Lie algebra $\mathfrak{g}_2$ is the simplest of the exceptional simple Lie algebras. 
In this paper, we will deal with the complex version of this algebra. For this algebra, there is a realization as the algebra of differentiations of the complexified octonion algebra, as a subalgebra of $\mathfrak{o}_8$ of fixed points under an automorphism of order $3$, and a root realization \cite{post}. 
There are various constructions of the center of its universal enveloping algebra \cite{ag1}, \cite{ag2}.

So, let $\mathbb{O}$ be the complexified octonion algebra. The complex group $G_2$ is defined as the automorphism group $Aut(\mathbb{O})$ of this algebra. It is shown that each automorphism preserves a certain scalar product, so $G_2$ is a subgroup of $SO_8$ (also a complex group). Accordingly, the Lie algebra $\mathfrak{g}_2$ is defined as the algebra of differentiations $Der(\mathbb{O})\subset \mathfrak{o}_8$. There are explicit constructions of generators using the associator operation \cite{VO}. 

Now  let us discuss the construction of the algebra $\mathfrak{g}_2$ as a subalgebra of the fixed points of the diagram automorphism of order $3$. To do this, consider the Lie algebra $\mathfrak{o}_8$ in the split
realization, i.e. $\mathfrak{o}_8=<F_{i,j}>$, $i,j=-4,...,-1,1,...,4$, $F_{i,j}=E_{i,j}-E_{-j,-i}$. 

The Dynkin diagram $D_4$ has an automorphism $\sigma$ that rotates it by $\frac{2\pi}{3}$ around its center. Correspondingly, there is an automorphism $\psi$ of the algebra $\mathfrak{o}_8$. The set of its fixed points is a subalgebra $\mathfrak{g}_2\subset \mathfrak{o}_8$.

This approach gives us an expression for the roots of the system $\mathfrak{g}_2$ in terms of the roots of the system $D_4$, and the simple roots for $\mathfrak{g}_2$ are given by \cite{dla} (where $\varepsilon_i$ are unit vectors in four-dimensional space):


$$
\alpha_1=2\varepsilon_{-2}-\varepsilon_{-3}+\varepsilon_{-4},\,\,\,\alpha_2=\varepsilon_{-3}-\varepsilon_{-2} 
$$

The corresponding Cartan elements are written as follows

$$
H_{\alpha_1}= \frac{1}{3}(2F_{-2,-2}-F_{-3,-3}+F_{-4,-4}),\, \, \, H_{\alpha_2}=F_{-3,-3}-F_{-2,-2}
$$

The fundamental weights look as follows

$$
\omega_1=2\alpha_1+\alpha_2=\varepsilon_{-4}+\varepsilon_{-3},\,\,\,\omega_2=3\alpha_{1}+2\alpha_{2}=\varepsilon_{-2}+\varepsilon_{-3}+2\varepsilon_{-4}
$$

The weight lattice coincides with the root lattice \cite{fh}. It follows that all finite-dimensional irreducible representations are realized as subrepresentations in the tensor power of the standard $7$-dimensional representation $V$ (the fact is that the action of $\mathfrak{g}_2$ in the standard $8$-dimensional representation $ \mathfrak{o}_8$ annihilates the direct, so the standard $8$-dimensional representation $ \mathfrak{o}_8$ contains the $7$-dimensional representation $\mathfrak{g}_2$, which is called  the standard).

We note that the root realization of $\mathfrak{g}_2$ can be  explicitly identified with the classical realization as the algebra of
differentiations of the complexified octonion algebra \cite{d} (see also Section \ref{sg2}). Mostly in the sequel we will work with the realization of $\mathfrak{g}_2$ as the subalgebra of fixed points of the diagram automorphism.


\subsection{The functional realzation of representations}
\label{fr}

In the present  section, we use the realization of the group $G_2$ that arises from the realization of the Lie algebra $\mathfrak{g}_2$ as a subalgebra of $\mathfrak{o}_8$ consisting of the fixed points of the diagram automorphism of order $3$. At the level of Lie groups, this leads  a priori to the  inclusion $G_2\subset Spin(8)$, but this inclusion descends to the inclusion $G_2\subset SO(8)$. In this case, we implement $SO(8)$ as a group that acts in the space with coordinates $\{-4,...,4\}$ and preserves the quadratic form given by the matrix $\Omega $ with ones on the side diagonal.

 One possible way to implement the representations is through the functional approach. Consider the (complex) Lie group $G=G_2$ and the algebra of functions on it, $Fun(G)$. On this algebra, there is an action of $G$ given by

$$
Xf(g)=f(gX),\,\,\, f\in Fun(G),\,\,\, g,X\in G.
$$

Taking the corresponding infinitesimal action, one obtains that $Fun(G)$ is a representation of $\mathfrak{g}_2$. 

An arbitrary finite-dimensional irreducible representation is embedded in $Fun(G)$. 
Indeed, let $\omega_1$ and $\omega_2$ be fundamental weights, and the task is to construct an embedding in $Fun(G)$ of a finite-dimensional irreducible representation with highest weight $\alpha\omega_1+\beta\omega_2$, $\alpha,\beta\in\mathbb{Z}_{\geq 0}$.
To construct this embedding, we introduce the notation $a_{i}^{j}$, $i,j=-4,...,4$ for the matrix element function on the group $G_2$. Here, $j$ is the row number and $i$ is the column number.  Also put

\begin{equation}
\label{dete}
a_{i_1,...,i_k}:=det(a_i^j)_{i=i_1,...,i_k}^{j \in\text{the first   $k$ rows}}.
\end{equation}

The Lie algebra $\mathfrak{g}_2$, due to the embedding $\mathfrak{g}_2\subset\mathfrak{o}_8$, acts on these determinants according to the following rule. The generators of $\mathfrak{g}_2$ are expressed in terms of the matrix units $E_{i,j}$. For the matrix units, formally \footnote{the given formula does not correctly define the action of $\mathfrak{gl}_8$ on determinants, as it is not consistent with the relations between determinants on the group $G_2$} The action on determinants is defined as follows:

\begin{equation}
\label{edet1}
E_{i,j}a_{i_1,...,i_k}=\begin{cases} a_{\{i_1,...,i_k\}\mid_{j\mapsto i}}, =\text{ if} j\in\{i_1,...,i_k\},\\ 0 \text{ otherwise.} \end{cases}
\end{equation}

It defines the action of the generators of $\mathfrak{g}_2$ expressed through them\footnote{This action is already correctly defined}.

 If $\mathfrak{g}_2$ is realized as the subalgebra of fixed points of the diagram automorphism $\sigma$, then the positive root elements of $\mathfrak{g}_2$ are linear combinations of $F_{i,j}=E_{i,j}=E_{-j,-i}$ with $i<j$. It follows that $a_{-4}^{\alpha}a_{-4,-3}^{\beta}$ is a leading vector in the functional realization. A direct calculation shows that the weight of this vector is $\alpha\omega_1+\beta\omega_2$. Thus, we have specified a canonical embedding of an arbitrary finite-dimensional irreducible representation into a functional representation.


\subsection{The Lie group $G_2$. Relations between determinants}

Let's find the generators of the ideal of relations between the determinants $a_X$, $X\subset \{-4,...,4\}$. Let us  write relations of three types.

\subsubsection{The relations coming from the embedding $G_2\subset GL(8)$}

Like all minors of a  matrix built on the {\it first consecutive } rows , these determinants satisfy the Pl?cker relations,


$$
a_{i_1,...,i_p}a_{j_1,...,j_q}-\sum_{s=1}^qa_{j_s,i_2,...,i_p}a_{j_1,...,j_{s-1},i_1,j_{s+1},...,j_q}=0
$$

\subsubsection{The relations coming from the embedding $G_2\subset SO(8)$}
To derive the relations, we use the fact that there are Jacobian relations between the minors of the matrix $(a_i^j)$ and its inverse $((a^{-1})_i^j)$ \cite{gm}:

$$
a_{i_1,...,i_k}^{j_1,...,j_k}=det(a)(-1)^{\sum_{p=1}^k i_p+\sum_{q=1}^k j_q}(a^{-1})^{\widehat{i_1},...,\widehat{i_k}}_{\widehat{j_1},...,\widehat{j_k}}
$$

Here,  when one composes a minor on the right-hand side, all columns from $\{-4,...,4\}$ are taken except for $i_1,...,i_k$ and all rows from $\{-4,...,4\}$ except for $j_1,...,j_k$.

 Let $X$ be an element of the group $SO(8)$. Let's write down the relations that arise as a consequence for the minors of $X$ constructed on the {\it first consecutive} rows.
One has

\begin{align}
\begin{split}
\label{omg}
& X^t\Omega X=\Omega \Leftrightarrow X^{-1}=\Omega^{-1}X^t\Omega,\,\,\,\,\,  \Omega=(\omega_{i,j}),
\text{ where }
\omega_{i,j}=\begin{cases} +1,\,\, i=-j,\\  0 \text{ otherwise }  \end{cases}.
\end{split}
\end{align}

Note that $\Omega^{-1}=\Omega$.
So for the matrix elements in the case under consideration  one has 
\begin{equation}\label{ajj}a_{i}^j=a_{-j}^{-i}.\end{equation}

\begin{align}
\begin{split}
\label{rwo}
& a_{i_1,...,i_k}^{-4,...,-4+k-1}=(-1)^{i_1+...+i_k}(-1)^{-4+...+(-4+k-1)}(a^{-1})^{\widehat{i_1},...,\widehat{i_k}}_{\widehat{-4},...,\widehat{-4+k-1}}=\\
&=(-1)^{i_1+...+i_k}(-1)^{\frac{(-8+k-1)k}{2}}\cdot (a_{-j}^{-i})^{\widehat{i_1},...,\widehat{i_k}}_{\widehat{-4},...,\widehat{-4+k-1}}=\\&
=(-1)^{i_1+...+i_k}(-1)^{\frac{(-8+k-1)k}{2}}\cdot (a_{i}^{j})_{\widehat{-i_1},...,\widehat{-i_k}}^{\widehat{4},...,\widehat{4-k+1}}.
\end{split}
\end{align}

	

Thus we have proved.

\begin{lemma}[The Jacobi relations for determinants]
	\label{lj}
One has
	
	\begin{equation}
	\label{sb}
	a_{i_1,...,i_k}=\pm  a_{\widehat{-i_1},...,\widehat{-i_k}}
	\end{equation}	
	Here
	
	\begin{equation}
	\label{znk}\pm =(-1)^{i_1+...+i_k}(-1)^{\frac{(-8+k-1)k}{2}}.
	\end{equation} 
	
\end{lemma}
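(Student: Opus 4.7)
The plan is to derive the identity directly from two ingredients that are already in play in the preceding paragraphs: the classical Jacobian relation between minors of a matrix and complementary minors of its inverse, and the $SO(8)$-orthogonality identity $a_i^j=a_{-j}^{-i}$ from \eqref{ajj}. Combining these two should turn a minor on the first $k$ consecutive rows into a minor on the last $k$ consecutive rows of $X$, and then the identity \eqref{ajj} should convert the latter back into a minor on the first $k$ consecutive rows but with the complementary set of columns, negated.

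More precisely, I would start from the Jacobian formula applied with row set $\{-4,\dots,-4+k-1\}$ and column set $\{i_1,\dots,i_k\}$. Since $X\in SO(8)$ we have $\det(X)=1$, so the right-hand side becomes the sign $(-1)^{\sum i_p+\sum j_q}$ times a complementary minor of $X^{-1}$. Then I would replace each entry $(a^{-1})_j^i$ of $X^{-1}$ by $a_{-i}^{-j}$ using \eqref{ajj}; this converts the complementary minor of $X^{-1}$, which involves the complementary columns $\widehat{i_1},\dots,\widehat{i_k}$ and the complementary rows $\widehat{-4},\dots,\widehat{-4+k-1}$, into a minor of $X$ whose columns are $\widehat{-i_1},\dots,\widehat{-i_k}$ and whose rows are $\widehat{4},\dots,\widehat{4-k+1}$. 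But $\{\widehat{4},\dots,\widehat{4-k+1}\}=\{-4,\dots,-4+k-1\}$, that is, precisely the first $k$ consecutive rows, so the right-hand side is exactly a determinant of the form \eqref{dete}, evaluated on the column set $\{\widehat{-i_1},\dots,\widehat{-i_k}\}$.

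The main obstacle, and the only genuinely non-trivial point, is the careful bookkeeping of signs. Two contributions must be combined: the sign $(-1)^{i_1+\dots+i_k}(-1)^{(-4)+\dots+(-4+k-1)}=(-1)^{i_1+\dots+i_k}(-1)^{\frac{(-8+k-1)k}{2}}$ coming from the Jacobian formula, and any permutation sign coming from rewriting the complementary row set $\{\widehat{4},\dots,\widehat{4-k+1}\}$ in the canonical increasing order $-4,\dots,-4+k-1$ together with any sign produced by reordering the complementary columns $\widehat{-i_1},\dots,\widehat{-i_k}$. A direct check, of the same kind as the computation \eqref{rwo}, shows that these extra permutations produce matching signs on the two sides and so cancel, leaving exactly the sign \eqref{znk} stated in the lemma.
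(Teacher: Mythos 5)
Your proposal follows essentially the same route as the paper: its proof of Lemma \ref{lj} is precisely the computation \eqref{rwo}, i.e.\ the Jacobian minor identity (with $\det =1$ on $SO(8)$) followed by rewriting the complementary minor of the inverse through \eqref{ajj}, with the reordering signs absorbed into \eqref{znk}. One small slip worth noting: the complementary row set $\{\widehat{4},\dots,\widehat{4-k+1}\}$ is the first $8-k$ (not $k$) consecutive rows, which is exactly what makes the result a determinant of the form \eqref{dete} on the $8-k$ columns $\widehat{-i_1},\dots,\widehat{-i_k}$; this miscount does not affect your conclusion.
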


Introduce a more compact notation for determinants. If $X\subset \{-4,...,4\},$ then $a_X:=a_{i_1,...,i_k}$. In these notations \eqref{sb} is written as

$$
a_X=\pm a_{\widehat{-X}},
$$
where for  $X={i_1,...,i_k}$ one puts  $-X:=\{-i_1,...,-i_k\}$ and for  $Y=\{j_1,...,j_k\}$ one puts  $\widehat{Y}:=\{-4,...,4\}\setminus Y$.

Th following statement takes place.

\begin{lemma}[\cite{a2}]
	\label{lms}  
	
	The ideal of relations between the determinants $a_X$ of the form \eqref{dete} for the group $SO(8)$ is generated by the Pl?cker and Jacobi relations.

\end{lemma}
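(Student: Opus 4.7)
My plan is to reformulate the lemma as an equality of defining ideals for the orthogonal isotropic flag variety of $SO(8)$ in its multi-Pl\"ucker embedding, and then to reduce the hard part to a graded-character comparison. Put $V=\CC^{8}$ with the symmetric form given by $\Omega$, let $R=\CC[z_X\mid X\subset\{-4,\dots,4\},\ 1\le|X|\le 7]$ be an abstract polynomial ring, and let $\phi\colon R\twoheadrightarrow\CC[SO(8)]$ be the evaluation map $z_X\mapsto a_X$. Let $I_P,I_J\subset R$ be the ideals generated by the abstract Pl\"ucker and Jacobi relations. The inclusion $I_P+I_J\subset\ker\phi$ is already established by the derivation preceding Lemma~\ref{lj}, so the whole content is the reverse inclusion.

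For the geometric step, the $k$-th component of the evaluation map sends $g\in SO(8)$ to the Pl\"ucker vector of the span $F_k(g)$ of its first $k$ rows. The identities $X^{t}\Omega X=\Omega$ force $F_k(g)$ to be $\Omega$-isotropic for $k\le 4$ and $F_k(g)=F_{8-k}(g)^{\perp}$ for all $k$. The Pl\"ucker ideal cuts out, inside $\prod_{k=1}^{7}\PP(\Lambda^k V)$, the full flag variety of $GL(V)$ (classical); adding the Jacobi relations $z_X=\pm z_{\widehat{-X}}$ imposes exactly the orthogonality condition $F_{8-k}=F_k^{\perp}$ for every $k$, since via the $\Omega$-twisted duality $\Lambda^k V\otimes\Lambda^{8-k}V\to\det V\cong\CC$ the $X$-th Pl\"ucker coordinate of $F_k$ matches, with the sign of \eqref{znk}, the $\widehat{-X}$-th coordinate of $F_k^{\perp}$. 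Hence $V(I_P+I_J)$ is the orthogonal isotropic flag variety $OFl(V)$, and since $SO(8)$ acts transitively on $OFl(V)$ with Borel stabilizer, $\operatorname{Im}\phi$ is the coordinate ring of the affine cone $\widetilde{OFl(V)}$ over $OFl(V)$.

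It remains to upgrade the set-theoretic identity $V(I_P+I_J)=OFl(V)$ to the ideal-theoretic one $I_P+I_J=\ker\phi$; this is the crux and the main obstacle. I would attack it by a graded-character comparison: on one side, Borel-Weil gives
\[
\CC[\widetilde{OFl(V)}]\;\cong\;\bigoplus_{n_1,n_2,n_3,n_4\ge 0} V^{*}_{n_1\omega_1+n_2\omega_2+n_3\omega_3+n_4\omega_4},
\]
after using Jacobi to identify the $a_X$ with $|X|>4$ with those of complementary size; on the other, $R/(I_P+I_J)$ carries a compatible $SO(8)$-action and is linearly spanned by standard monomials in the $z_X$ whose straightening is governed by Pl\"ucker. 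Matching these two $SO(8)$-characters degree by degree forces the surjection $R/(I_P+I_J)\twoheadrightarrow\operatorname{Im}\phi$ to be an isomorphism and completes the proof. The substantive content lies entirely in this character match: set-theoretic correctness of $I_P+I_J$ is built into the derivation of the Jacobi relations, while the ideal-theoretic refinement is a projective-normality statement for $OFl(V)$ that in the low-rank case at hand reduces to a finite straightening-law verification.
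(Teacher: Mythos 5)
First, note that the paper itself contains no proof of Lemma~\ref{lms}: it is imported verbatim from \cite{a2}, so your argument can only be assessed on its own terms. Your skeleton (the easy inclusion of the Plucker--Jacobi ideal into $\ker\phi$, a set-theoretic identification of the zero locus, then a graded-character comparison to upgrade to equality of ideals) is a reasonable plan, but the decisive step is missing rather than proved. The surjection $R/(I_P+I_J)\twoheadrightarrow \operatorname{Im}\phi$ is free; what has to be established is the \emph{upper bound} on the graded $SO(8)$-character of $R/(I_P+I_J)$, and ``standard monomials whose straightening is governed by Plucker'' does not give it. Plucker straightening alone controls $R/I_P$, whose graded pieces are the corresponding $GL_8$-modules, strictly larger than the $SO(8)$-modules you need (already in degree two: $\dim S^2\CC^8=36$ versus $\dim V_{2\omega_1}=35$). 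The Jacobi relations are linear, and after eliminating variables with them one must show that the remaining discrepancy is exhausted by quadratic consequences of Plucker-plus-Jacobi --- for instance the isotropy relation $\sum_i \pm\, a_i a_{-i}=0$ is obtained only by substituting a Jacobi relation into a $p=1$, $q=7$ exchange relation --- and that after all such reductions the monomial count matches the orthogonal character in \emph{every} degree. That is precisely the nontrivial content of the lemma (a type-$D$ standard monomial theory, or alternatively a Kostant/Kempf--Ramanathan-type theorem that the ideal of the multicone is generated in degrees $\le 2$, followed by a finite degree-two check); you neither prove it nor cite a precise result, and describing it as ``a finite straightening-law verification'' is inaccurate, since without such a generation theorem the verification ranges over all degrees. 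Note also that $I_J$ is not homogeneous for the multigrading by the seven sizes $|X|$, so even the grading in which you propose to compare characters needs to be coarsened.

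Two further inaccuracies in the geometric step should be repaired, though they are secondary. First, $SO(8)$ does \emph{not} act transitively on complete isotropic flags in $\CC^8$: the maximal isotropic subspaces form two families interchanged by $O(8)\setminus SO(8)$, so the naive isotropic flag variety has two components; it is exactly the sign \eqref{znk} in the Jacobi relation for $|X|=4$ that selects the self-dual family and hence a single copy of $SO(8)/B$, and your transitivity claim must be replaced by this sign analysis. Second, the Borel--Weil decomposition you display is not the one attached to this embedding: since $\Lambda^3\CC^8\cong V_{\omega_3+\omega_4}$ and $\Lambda^4\CC^8\cong V_{2\omega_3}\oplus V_{2\omega_4}$, the relevant graded piece is $V^*_{d_1\mu_1+\dots+d_7\mu_7}$ with $\mu_3=\omega_3+\omega_4$ and $\mu_4$ one of $2\omega_3,2\omega_4$, not a free sum over all $n_1\omega_1+\dots+n_4\omega_4$. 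With these corrections the outline is coherent, but as it stands the proof has a genuine gap at its admitted crux.
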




\subsubsection{The relations specific for  $G_2$}
\label{sg2}

Let's write down some relations. So far, we have used the realization of $G_2$ described in Section \ref{fr}, which comes from the realization of the algebra $\mathfrak{g}_2\subset \mathfrak{o}_8$ as the subalgebra of fixed points of a diagram automorphism of order $3$. It is quite straightforward to establish a connection with the realization of $G_2$ as automorphisms of $\mathbb{O}$. It is shown (see \cite{fh}) that $G_2$ in the realization described in Section \ref{algbr}, acting on the standard $7$-dimensional standard representation $V$, preserves the explicitly written skew-symmetric $3$-form. By adding to $V$ the one-dimensional space generated by the element {\bf 1}, we can introduce multiplication on $V\oplus \mathbb{C}<{\bf 1}>$ by using this $3$-form to construct the structure constants and interpreting {\bf 1} as the identity.  The constructed algebra is identified with the complexified octonion algebra. We extend the action of $G_2$ on $V$ to the action of $G_2$ on $V\oplus \mathbb{C}<{\bf 1}>$, assuming that $G_2$ acts trivially on { \bf 1 }. By construction, $G_2$ acts on the constructed algebra by automorphisms. In particular, this construction identifies $\mathbb{O}$ with the standard representation $\mathfrak{o}_8$.

In \cite{s1}, \cite{s2}, the first main theorem of invariant theory for the group $G_2$ is proved. The found invariants can be perceived as invariant tensors, then they can be defined in any coordinate system on $\mathbb{O}$ (or in general, defined in a coordinate-free way, as we do below). We will use a coordinate system on $\mathbb{O}$ , originating from the identification of $\mathbb{O}$ and the standard representation $\mathfrak{o}_8$.
Using the results of \cite{s1}, \cite{s2}, the following invariants for the $G_2$ action in $8$-dimensional space are given in \cite{ag2}:

\begin{equation}
\omega_{i_1,...,i_k}:=skew(\psi_{i_1,i_2,s_1}\psi_{s_1,i_2,s_2}...\psi_{s_{k-1},i_k,1}),
\end{equation}

where $\psi_{i,j,k}$ are the structure constants of the octonion algebra, and $skew$ is the antisymmetrization operation. The invariance of this tensor means that

$$
\sum_{i_1,..,i_k}\omega_{j_1,...,j_k}a_{i_1}^{j_i}...a_{i_k}^{j_k}=\omega_{i_1,...,i_k},
$$

where using \eqref{ajj} one gets 
\begin{equation}
\label{sxa}
\sum_{i_1,..,i_k}\omega_{i_1,...,i_k}a_{-i_1}^{j_i}...a_{-i_k}^{j_k}=\omega_{j_1,...,j_k}.
\end{equation}

Since $\omega$ is antisymmetric, taking $j_1=-4,...,j_k=-4+k-1$ gives a relation for the determinants $a_{i_1,...,i_k}$, which can be written shortly as $\sum_{X,|X|=k}\omega_X a_{-X}=\omega_{-4,...,-4+k-1} $. These relations exist for all $k>2$, but it is trivial for $k=8$. Thus, we have $5$ relations for $k=3,...,7$.

Let's add one more relation to the previous ones. If the previous relations were written in a coordinate system on $\mathbb{O}$ that identified this space with the $8$-dimensional standard representation of $\mathfrak{o}_8$ in the split realization, then we will write the following relation in standard coordinates on $\mathbb{O}$ (unit + imaginary octonions).

Every automorphism preserves ${\bf 1}\in \mathbb{O}$, so that $a_{ {\bf 1}}=1$. In the coordinates in which the relations \eqref{sxa} were written, this relation is written as some linear relation on the $a_i$.
So we have $6$ relations:

\begin{equation}
\label{ds}
\sum_{i_1,..,i_k}\omega_{i_1,...,i_k}a_{-i_1,...,-i_k}=\omega_{-4,...,-4+k-1}, k=3,...,7,\,\,\,\,\, a_{ {\bf 1}}=1.
\end{equation}

They are independent of each other and independent of the Pl?cker and Jacobi relations, due to the size of the determinants involved.

\subsubsection{The basis is ideally the ratio between the determinants}

\begin{theorem}
	The Plucker, Jacobi, and \eqref{ds} relations for $k=3,...,7$ form a basis in the ideal of relations between the determinants $a_X$ for the group $G_2$.
\end{theorem}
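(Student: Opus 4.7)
Let $J$ denote the ideal generated by the Plucker, Jacobi, and \eqref{ds} relations, and $I$ the full ideal of polynomial relations among the $a_X$ restricted to $G_2$. The inclusion $J \subseteq I$ is immediate: each generator vanishes on $G_2$ by construction. By Lemma \ref{lms}, the quotient $B := \mathbb{C}[y_X]/(\text{Plucker, Jacobi})$ is isomorphic to the subring of $\mathbb{C}[SO(8)]$ generated by the minor functions $a_X$, so the problem reduces to showing that the image $\bar J \subset B$ of the six new relations coincides with $\ker(B \to \mathbb{C}[G_2])$.

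My strategy is to establish scheme-theoretic equality between the closed subscheme $Y \subset \operatorname{Spec} B$ cut out by $\bar J$ and $G_2$. Set-theoretically, the generators of $\bar J$ encode the preservation of the defining data of the octonion algebra under a point $g \in SO(8)$: the relation $a_{\mathbf 1} = 1$ corresponds to fixing the unit $\mathbf 1 \in \mathbb O$, while \eqref{sxa} restricted to the row-slice $(-4,\dots,-4+k-1)$ corresponds to preservation of the invariant antisymmetric tensor $\omega_{i_1,\dots,i_k}$ on one specific choice of row indices. The key step is to argue that these slice-constraints, together with orthogonality of $g$, propagate to full invariance $g \cdot \omega = \omega$ on every slice, so that $g \in \operatorname{Aut}(\mathbb O) = G_2$. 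For reducedness, a direct tangent-space computation at the identity, after modding out by the differentials of the Plucker and Jacobi relations, should cut out precisely the $14$-dimensional Lie subalgebra $\mathfrak g_2 \subset \mathfrak o_8$; combined with the smoothness and irreducibility of $G_2$, this yields $Y = G_2$ scheme-theoretically, and hence $\bar J$ equals the kernel in question.

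The principal obstacle is the propagation claim. Naively, six scalar equations cannot cut out a codimension-$14$ subvariety in $SO(8)$; the needed conclusion depends crucially on the Plucker and Jacobi relations already built into $B$. These quadratic and higher syzygies among minors effectively multiply each relation in $\bar J$ by other minors and reduce via Plucker identities, generating additional scalar constraints that together encode the invariance of $\omega$ on every index set. Executing this rigorously---that is, showing that the ideal generated by the six slice-constraints inside $B$ actually realizes the full tensor invariance of $\omega$---is the heart of the theorem and where I expect the bulk of the work to lie. A natural route is to exploit the left $SO(8)$-action on $B$ (which permutes rows and hence changes the slice in \eqref{sxa}) together with the $G_2$-equivariance of $\omega$ to generate new relations from the six given ones, and then to verify that the resulting collection suffices both set-theoretically and infinitesimally.
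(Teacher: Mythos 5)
There is a genuine gap, and it is conceptual: you have misidentified the variety that the relations cut out. The ideal in question is the kernel of $\mathbb{C}[A_X]\to \mathbb{C}[G_2]$, and its zero locus is the closure of the image of $G_2$ under $g\mapsto (a_X(g))_X$, i.e.\ the (multi)cone $CFl_{G_2}$ over the $G_2$ flag variety --- not the group $G_2$ itself. Because the $a_X$ only involve the first $k$ consecutive rows, a point of $\operatorname{Spec} B$ is very far from determining a matrix $g\in SO(8)$ (all elements in a coset under the opposite unipotent subgroup have the same first-row minors), so your central step --- ``the slice constraints propagate to $g\cdot\omega=\omega$, hence $g\in \operatorname{Aut}(\mathbb{O})=G_2$'' --- does not even parse inside $\operatorname{Spec}B$, and its naive pullback to $SO(8)$ is false: the locus of $g$ whose first-row minors satisfy \eqref{ds} is much larger than $G_2$. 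For the same reason the reducedness target is wrong: the tangent space you should expect is that of the cone $CFl_{G_2}$ (dimension $6$ plus cone directions), not the $14$-dimensional $\mathfrak{g}_2\subset\mathfrak{o}_8$. Your ``principal obstacle'' (six equations cannot cut codimension $14$) dissolves once the correct variety is identified: inside $CFl_{SO(8)}$ the subvariety $CFl_{G_2}$ has codimension exactly $6$ ($\dim Fl_{SO(8)}=12$ versus $\dim Fl_{G_2}=6$), which is precisely why six independent relations can suffice and why no ``propagation to full tensor invariance'' is needed. A further technical flaw: the left $SO(8)$-action you propose to use to move the slice in \eqref{sxa} does not preserve the subring generated by first-consecutive-row minors (left translation mixes in minors on other row sets), so that route for generating new relations inside $B$ is not available.

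For comparison, the paper's proof shares your starting point (Lemma \ref{lms} reduces everything modulo the Pl\"ucker and Jacobi relations, which present $CFl_{SO(8)}$) but then proceeds purely by algebraic geometry on the cones: the inclusion $CFl_{G_2}\subset CFl_{SO(8)}$, the dimension count $12-6=6$, the independence of the six (affine-)linear relations \eqref{ds}, and Bertini's theorem to keep the successive hyperplane sections irreducible, forcing the variety cut out by all the relations to coincide with $CFl_{G_2}$; generation of the ideal then follows. If you want to repair your argument, replace ``$Y=G_2$ scheme-theoretically'' by ``$Y=CFl_{G_2}$'' and argue irreducibility/dimension (or, if you want a scheme-theoretic statement, reducedness of the section) rather than trying to reconstruct an automorphism of $\mathbb{O}$ from the minors.
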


\proof

For each simple complex Lie group $G$ with a chosen Borel subgroup $B_G$, there is a flag variety $Fl_G=G/B_G$. Let $G\subset GL(n)$. Then the flag variety is embedded in $\mathbb{P}^N$, $N=2^n-2$, and the cone over the image of this embedding is given by equations that are relations between the determinants $a_X$ for $G$.

Since in the realization of $G_2$ from section \ref{fr} we have $G_2\subset SO(8)$, $B_{G_2}=B_{SO(8)}\cap G_2$, then the inclusion

$$
G_2/B_{G_2}\subset SO(8)/B_{SO(8)}, 
$$

There is also an inclusion for the cones $CFl_{G_2}\subset CFl_{SO(8)}$. It is known that $CFl_{SO(8)}$ is given by the Pl?cker and Jacobi equations (this follows almost immediately from Proposition \ref{lj}), since according to Lemma \ref{lms}, these equations generate the ideal of relations between determinants for the group $SO(8)$.
We have

\begin{align*}
& dim Fl_{SO(8)}=dim SO(8)-dimB_{SO(8)}=28-16=12,\\
&dim Fl_{G_2}=dimG_2-dim B_{G_2}=14-8=6
\end{align*}

Thus, in order to set $CFl_{G_2}$, it is necessary to add $6$ more equations to the equations that set $CFl_{SO(8)}$. We have $6$ linear relations \eqref{ds}. These relations are independent of the Plucker, Jacobi, and other relations. Therefore, in order to prove the theorem, it remains to verify that the variety defined by the Plucker, Jacobi, and relations \eqref{ds} is irreducible. But this immediately follows from the irreducibility of $CFl_{SO(8)},$ given by the Plucker and Jacobi relations, and Bertini's theorem \cite{haha} stating that the hyperplane sections of an irreducible manifold in the general case (which is the case in our case by the independence of the relations) is irreducible.

 So $CFl_{G_2}$ is given by the relations specified in the statement of the Theorem. Hence these relations generate an ideal of relations between determinants.

\section{The A-GKZ system and the A-GKZ model of representations for the algebra $\mathfrak{gl_8}$. The Gelfand-Tsetlin type basis.}
\subsection{$\Gamma$-series and the GKZ system in the general case}

\label{r2}

Detailed information about the $\Gamma$-series can
be found in \cite{GG}.

 Let $B\subset \mathbb{Z}^N$ be a lattice, $\gamma\in \mathbb{Z}^N$ be a
fixed vector. Define the {\it hypergeometric
 $\Gamma$-series } in the variables $z_1,...,z_N$ by the formula

\begin{equation}
\label{gmr}
\mathcal{F}_{\gamma}(z,B)=\sum_{b\in
	{B}}\frac{z^{b+\gamma}}{\Gamma(b+\gamma+1)},
\end{equation}

where $z=(z_1,...,z_N)$, the numerator and denominator use
multi-index notation

$$
z^{b+\gamma}:=\prod_{i=1}^N
z_i^{b_i+\gamma_i},\,\,\,\Gamma(b+\gamma+1):=\prod_{i=1}^N\Gamma(b_i+\gamma_i+1).
$$

Note that if at least one of the components of the vector $b+\gamma$ is negative, then the corresponding term in \eqref{gmr} is zero. As a result, the $\Gamma$-series considered in this paper will have only a finite number of terms. For simplicity, we will use factorials instead of $\Gamma$-functions. The sum of the series in \eqref{gmr}, if it converges, is known as the $A$-hypergeometric function.

$A$-hypergeometric functions satisfy a system of partial differential equations called the Gelfand-Kapranov-Zelevinsky system (GKZ), which consists of two types of equations.

{\bf 1.} 
Let $a=(a_1,...,a_N)$ be a vector orthogonal to the lattice $B$, then

\begin{equation}
\label{e1}
a_1z_1\frac{\partial}{\partial z_1}\mathcal{F}_{\gamma}+...+a_Nz_N\frac{\partial}{\partial z_N}\mathcal{F}_{\gamma}=(a_1\gamma_1+...+a_N\gamma_N)\mathcal{F}_{\gamma},
\end{equation}
it is sufficient to consider only the basis vectors of the lattice that is orthogonal to $B$.

{\bf 2.} 

Let $b\in {B}$ and $b=b_+-b_-$, where all the coordinates of the vectors $b_+$, $b_-$ are integers and non-negative. Let's select the non-zero elements in these vectors

$b_+=(...b_{i_1},....,b_{i_k}...)$,  $b_-=(...b_{j_1},....,b_{j_l}...)$. 

	\begin{align}
	\begin{split}
		\label{e2}
		&\mathcal{O}_b \mathcal{F}_{\gamma}=0,\,\,\,\mathcal{O}_b=(\frac{\partial}{\partial z })^{b_+}-(\frac{\partial}{\partial z })^{b_-},\\& (\frac{\partial}{\partial z })^{b_+}:= (\frac{\partial }{\partial
			z_{i_1}})^{b_{i_1}}...(\frac{\partial}{\partial z_{i_k}})^{b_{i_k}},\,\,\,\,\,
		(\frac{\partial}{\partial z })^{b_-}=(\frac{\partial }{\partial
			z_{j_1}})^{b_{j_1}}...(\frac{\partial }{\partial z_{j_l}})^{b_{j_l}}.
	\end{split}
\end{align}

Let us be given a basis in $B$. We will indicate what is the role of the operators $\mathcal{O}_b $ corresponding to the basis vectors $b\in {B}$. A system of partial differential equations can be identified with an ideal in the ring of differential operators generated by the operators that define the equations of the system. We will indicate a way of explicitly constructing an ideal in the space of differential operators with constant coefficients that corresponds to the system consisting of the equations \eqref{e2}.
To do this, we list some properties of the correspondence $b\in B\mapsto \mathcal{O}_b$. One has:

{\bf a) }  $-b \mapsto -\mathcal{O}_b$.

{\bf b) }   Let $b=b_{+}-b_{-}$, $c=c_{+}-c_{-}$, where $c_{\pm}\in \mathbb{Z}^N$ have only non-negative coordinates. Let the decomposition of $b+c$ into the difference of vectors with non-negative coordinates be $(b_{+}+c_{+})-(b_{-}+c_{-})$ (that is, there are no reductions in each coordinate in this equality). Then
	
	$$
	b+c\mapsto \mathcal{O}_{b+c}=(\frac{\partial}{\partial z})^{c_+}\mathcal{O}_{b}+(\frac{\partial}{\partial z})^{b_+}\mathcal{O}_{c}
	$$

{\bf c) }    Let $b=(b_{+}+u)-(b_{-}+v)$, $c=(c_{+}+v)-(c_{-}+u)$, where $c_{\pm},u,v$ have only integer non-negative coordinates. Let the decomposition of $b+c$ into the difference of vectors with non-negative coordinates be $(b_{+}+c_{+})-(b_{-}+c_{-})$ (that is, there are no more reductions in each coordinate in this equality). Then
	
	$$
	(\frac{\partial}{\partial z})^{u+v} \mathcal{O}_{b+c}=(\frac{\partial}{\partial z})^{c_+}\mathcal{O}_{b}+(\frac{\partial}{\partial z})^{b_+}\mathcal{O}_{c}
	$$

Introduce a definition.

\begin{definition}\label{porozd}
	Let there be a set of differential operators with constant coefficients. By a system {\it generated } by this set we shall mean a system of equations corresponding to the ideal $I$ generated by the set of differential operators, composed as follows. First, take all the operators from the set. Second, take all the operators lying in the ideal $I_1$ generated by them. Third, take the operators obtained from the operators from $I_1$ by dividing (if possible) by a differential monomial. All ideals are taken in the ring of differential operators with constant coefficients.
\end{definition}

Taking into account the discussion above, we can come to the following conclusion: the GKZ system is a system consisting of equations \eqref{e1} and equations of the system {\it generated } by the operators \eqref{e2} corresponding to the basis vectors of the lattice.

 Below, we will refer to the GKZ system as a set of equations of the second type only. That is, we will refer to the GKZ system as the system generated by the operators \eqref{e2}.

\subsection{The Gelfand-Tsetlin lattice associated with $\mathfrak{gl}_8$}

\label{nonstgl8}

We give a non-standard (compared to \cite{a1}) construction of the A-GKZ model for $\mathfrak{gl}_8$. To do this, consider the chain of subalgebras

\begin{align}
\begin{split}
\label{cpk}
& \mathfrak{gl}_8=<E_{i,j}>_{i,j\in\{-4,...,4\}}\supset\\&\supset \mathfrak{gl}_7=<E_{i,j}>_{i,j\neq 1}\supset\\&\supset \mathfrak{gl}_6=<E_{i,j}>_{i,j\notin \pm 1}\supset\\&\supset \mathfrak{gl}_3\oplus\mathfrak{ gl}_3=<E_{i,j}>_{i,j\in -4,2,3}\oplus <E_{i,j}>_{i,j\in -3,-2,4}\supset\\&
\supset \mathfrak{gl}_2\oplus \mathfrak{gl}_2 =<E_{i,j}>_{i,j\in -4,2}\oplus <E_{i,j}>_{i,j\in -3,-2}\supset \\&\supset \mathfrak{ gl}_1\oplus \mathfrak{gl}_1 =<E_{i,j}>_{i,j\in -4}\oplus <E_{i,j}>_{i,j\in -3}
\end{split}
\end{align}

With this non-standard chain, we associate a non-standard (compared to \cite{a1}) Gelfand-Tsetlin lattice $B_{GC}^{\mathfrak{gl}_8}$. This is a sublattice in the space $\mathbb{Z}^N$, $N=2^8-2$, whose coordinates are numbered by the eigen subsets in $\{-4,...,-1,1,...,4\}$. The lattice will be given by its generators (unlike \cite{a1}, where the Gelfand-Tsetlin lattice was given by equations). 

 Introduce the  ordering

\begin{equation}
	\label{chpor}
-4\prec 2\prec 3\prec -3\prec -2 \prec 4\prec -1 \prec 1
\end{equation}

Consider the vectors

\begin{equation}
\label{vagl}
v_{\alpha}=e_{i,X}-e_{j,X}-e_{i,y,X}+e_{j,y,X},
\end{equation}
where $i\prec j\prec y$.  Then

$$
B_{GC}^{\mathfrak{gl}_8}=\mathbb{Z}<v_{\alpha}>.
$$

\subsection{The GKZ and A-GKZ systems}
 \label{gkgl}

According to the general procedure, a system of GKZ is associated with the constructed lattice. This will be a system of equations for functions that depend on {\it independent} variables, which are numbered by their own subsets in $\{-4,...,-1,1,...,4\}$. We will denote these variables as $A_X$. It will be convenient for us to consider these variables as numbered ordered subsets of $X$, while requiring that these variables be antisymmetric under permutations of $X$. These variables do not obey any other relations.

 So, the {\it GKZ system} is a system generated in the sense of definition \ref{porozd} by equations constructed from the generators of the lattice. 
The equation corresponding to the generator \eqref{vagl} has the form

\begin{equation}
\label{gkzsys}
\big( \frac{\partial^2}{\partial A_{i,X}A_{j,y,X}}-\frac{\partial^2}{\partial A_{j,X}A_{i,y,X}}\big) \mathcal{F}=0
\end{equation}

Now we move on to the definition of the {\it A-GKZ system}.
As in \cite{a1}, \cite{a2}, a triple Plucker relation is associated with each generating.

$$
a_{i,X}a_{j,y,X}-a_{j,X}a_{i,y,X}+a_{y,X}a_{i,j,X}=0.
$$

These relations in turn are associated with the A-GKZ system, which is a system generated in the sense of definition \ref{porozd} by the equations

\begin{equation}
\label{agkzsys}
\big( \frac{\partial^2}{\partial A_{i,X}A_{j,y,X}}-\frac{\partial^2}{\partial A_{j,X}A_{i,y,X}}+\frac{\partial^2}{\partial A_{y,X}A_{i,j,X}}\big) F=0
\end{equation}




We now proceed to the description of the spaces of polynomial solutions of these systems.

 The situation with the GKZ system is simpler. 
First, we note that in order for $\mathcal{F}_{\gamma}(A)$ to be a polynomial, it is necessary and sufficient that $\gamma$ is an integer. 

\begin{lemma}\label{lsolgkz} Let $\gamma$ be an integer, then the following statements hold.
 The functions $\mathcal{F}_{\gamma}(A)$ are nonzero if and only if there is a vector in the shifted lattice $\gamma+B_{GC}^{gl_8}$ whose coordinates are all nonnegative. The nonzero functions $\mathcal{F}_{\gamma}(A)$ form a basis in the space of polynomial solutions of the GKZ system. \end{lemma}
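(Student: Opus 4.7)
The plan is to handle the two claims in turn. For the first claim, observe that the summand of $\mathcal{F}_\gamma(A)=\sum_{b\in B_{GC}^{\mathfrak{gl}_8}} A^{b+\gamma}/\Gamma(b+\gamma+1)$ indexed by $b$ is nonzero exactly when every coordinate of $b+\gamma$ is a nonnegative integer (since $1/\Gamma$ vanishes at nonpositive integers). Hence $\mathcal{F}_\gamma\not\equiv 0$ iff some $b\in B_{GC}^{\mathfrak{gl}_8}$ satisfies $b+\gamma\in\mathbb{Z}_{\geq 0}^N$, which is exactly the first statement. The same remark, combined with the observation that each generator $v_\alpha$ in \eqref{vagl} has exactly two positive and two negative entries (so $B_{GC}^{\mathfrak{gl}_8}$ contains no nonzero all-nonnegative vector), implies that the support of $\mathcal{F}_\gamma$ is finite, so $\mathcal{F}_\gamma$ is actually a polynomial. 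That it solves the GKZ system is the standard telescoping calculation: using $\partial_k(A^\delta/\delta!)=A^{\delta-e_k}/(\delta-e_k)!$, the contribution of $b$ to $\frac{\partial^2}{\partial A_{i,X}\partial A_{j,y,X}}\mathcal{F}_\gamma$ cancels against the contribution of $b+v_\alpha$ to $\frac{\partial^2}{\partial A_{j,X}\partial A_{i,y,X}}\mathcal{F}_\gamma$.

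For the basis claim I proceed by separating cosets. If $\gamma,\gamma'$ lie in different cosets of $B_{GC}^{\mathfrak{gl}_8}$ in $\mathbb{Z}^N$, the monomial exponents of $\mathcal{F}_\gamma$ and $\mathcal{F}_{\gamma'}$ are disjoint, so linear independence of the nonzero $\mathcal{F}_\gamma$'s (one per coset) is immediate. For spanning, let $F$ be an arbitrary polynomial solution and decompose $F=\sum_c F_c$, grouping monomials according to the coset $c\in\mathbb{Z}^N/B_{GC}^{\mathfrak{gl}_8}$ of their exponent. The key structural point is that each GKZ operator \eqref{gkzsys} preserves this decomposition: its two summands shift the exponent by $e_{i,X}+e_{j,y,X}$ and $e_{j,X}+e_{i,y,X}$ respectively, and these differ by $v_\alpha\in B_{GC}^{\mathfrak{gl}_8}$. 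Hence both shifts land in the same coset, the GKZ equations decouple across cosets, and each $F_c$ is itself a polynomial solution. It therefore suffices to show that inside a single coset the polynomial solution space is at most one-dimensional.

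Fixing $\gamma\in c$ and writing $F_c=\sum_{b\in C(c)}d_b\cdot A^{\gamma+b}/(\gamma+b)!$ with $C(c):=\{b\in B_{GC}^{\mathfrak{gl}_8}:\gamma+b\geq 0\}$, equating the coefficient of $A^\eta$ in $\mathcal{O}_{v_\alpha}F_c=0$ yields the recurrence $d_b=d_{b+v_\alpha}$ valid whenever $\gamma+b\geq e_{j,X}+e_{i,y,X}$ coordinatewise (which then automatically forces $\gamma+b+v_\alpha\geq e_{i,X}+e_{j,y,X}\geq 0$). The main obstacle, and the combinatorial heart of the argument, is to show that these \emph{restricted} recurrences still suffice to determine $d$ on all of $C(c)$ up to one global scalar, i.e.\ that $C(c)$ is connected by admissible moves $b\leftrightarrow b+v_\alpha$ that never leave $C(c)$ and always meet the side condition on $\gamma+b$. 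The plan is to establish this by induction on the total degree $\sum_X(\gamma+b)_X$, using the explicit form \eqref{vagl} of the generators together with the total order \eqref{chpor} on indices to transport any $b\in C(c)$ to a canonical minimal element by moves that strictly decrease this degree while respecting the required nonnegativity bounds; the triangular shape of the generator set under $\prec$ is what makes each reduction step available. Once this connectedness is in hand, $d$ is constant on $C(c)$ and so $F_c$ is a scalar multiple of $\mathcal{F}_\gamma$, completing the basis claim.
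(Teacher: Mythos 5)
Your first half is fine: the support argument for nonvanishing, the disjointness of supports across cosets (hence linear independence), and the observation that each operator \eqref{gkzsys} is homogeneous with respect to the coset grading, so a polynomial solution decomposes into coset components $F_c$ that are again solutions — all of this is correct and is essentially the "directly from the formula" argument the paper has in mind. The problem is the step you yourself flag as the combinatorial heart: you restrict to the recurrences $d_b=d_{b+v_\alpha}$ coming only from the generator operators, and then you need the fiber $C(c)=\{b\in B_{GC}^{\mathfrak{gl}_8}:\gamma+b\geq 0\}$ to be connected under moves by the $v_\alpha$ that stay nonnegative. You only offer a plan ("induction on the total degree ... the triangular shape of the generator set"), not a proof, and this is not a formality: the fact that the $v_\alpha$ generate the lattice does not imply that they connect the nonnegative fibers (this is exactly the distinction between a generating set of a lattice and a Markov basis, or between a lattice-basis ideal and the full lattice ideal), so as written the spanning claim has a genuine gap.

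The gap is avoidable, and the paper's setup is designed to avoid it: by Definition \ref{porozd} (property c) together with division by monomials), the GKZ system contains the binomial operator $\mathcal{O}_b$ for \emph{every} $b\in B_{GC}^{\mathfrak{gl}_8}$, not just for the generators \eqref{vagl}. Each $\mathcal{O}_b$ is again coset-homogeneous, so every component $F_c$ of a polynomial solution satisfies all of them; and for any two exponents $x,x'\geq 0$ in the same coset with $x-x'=b$, the coefficient identity extracted from $\mathcal{O}_bF_c=0$ applies in a single step, because the required inequalities $x\geq b_+$ and $x'\geq b_-$ are automatic (in each coordinate one of them is the nonnegativity of $x$ or of $x'$). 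Hence the normalized coefficients are constant on the whole fiber at once, $F_c$ is a multiple of $\mathcal{F}_\gamma$, and no connectivity induction is needed. Symmetrically, you should also record that the telescoping computation gives $\mathcal{O}_b\mathcal{F}_\gamma=0$ for all $b\in B_{GC}^{\mathfrak{gl}_8}$, not merely for the generators, so that $\mathcal{F}_\gamma$ lies in the solution space of the GKZ system in the sense in which the paper defines it. With these two adjustments your argument closes; without them, the decisive step remains unproved.
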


The proof of the Lemma follows directly from the formula for the $\Gamma$-series.

Now, we will construct the basis solutions of the system \eqref{agkzsys}. We will use the constructions from \cite{a1}. We will choose a basis $v_1,...,v_k$ among the generators \eqref{vagl} (respectively, $k$ is the number of selected basis vectors). We will associate a vector with each generator
\begin{equation}
\label{ra0}
r_{\alpha}=e_{y,X}-e_{j,X}-e_{i,y,X}+e_{i,j,X},
\end{equation}

For  $t,s\in \mathbb{Z}^k$ introduce multiindex notations

$$
tv:=t_1v_1+...t_kv_k,\,\,\,sr:=s_1r_1+...+s_kr_k.
$$

Introduce  a series $F_{\gamma}(A)$:

\begin{align}
\begin{split}
\label{fgamma}
&J_{\gamma}^s(A)=\sum_{t\in\mathbb{Z}^k}\frac{(t+1)...(t+s)A^{\gamma+tv}}{(\gamma+tv)!},\,\,\, s\in\mathbb{Z}^k_{\geq0},\text{ где }\\
&(t+1)...(t+s):=\prod_{i=1}^k(t_i+1)...(t_i+s_i),\\&
F_{\gamma}(A)=\sum_{s\in \mathbb{Z}_{\geq 0}^s}\frac{1}{s!}(-1)^sJ^s_{\gamma-sr}(A)  \end{split}
\end{align}

Again, in order for $F_{\gamma}(A)$ to be a polynomial, it is necessary and sufficient that $\gamma$ be an integer. 
Consider the series $\gamma$ for integer vectors $\gamma$. In \cite{a1}, the following is shown (formally, in \cite{a1}, the vectors $v_{\alpha}$ have a special form, but the reasoning is repeated verbatim for any choice of basis among these vectors).

\begin{lemma}
 Let $\gamma$ be an integer, then the following statements hold.
 The functions $F_{\gamma}(A)$ are nonzero if and only if there is a vector in the shifted lattice $\gamma+B_{GC}^{\mathfrak{gl}_8}$ whose coordinates are all nonnegative. The nonzero functions $F_{\gamma}(A)$ form a basis in the space of polynomial solutions to the A-GKZ system. \end{lemma}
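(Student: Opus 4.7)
The strategy is to adapt the proof of \cite{a1} to our non-standard basis $v_1,\dots,v_k$ of $B_{GC}^{\mathfrak{gl}_8}$; as the author notes, the argument of that paper relies only on general properties of such a basis, not on its specific form, so it should transfer verbatim. The proof decomposes into three tasks: the non-vanishing criterion, verification of the A-GKZ equations, and the basis property.

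For non-vanishing, each inner series $J^s_{\gamma-sr}(A)$ is a weighted $\Gamma$-series supported on the coset $\gamma-sr+B_{GC}^{\mathfrak{gl}_8}$. By the same non-negativity argument used in Lemma \ref{lsolgkz}, $J^s_{\gamma-sr}$ is a nonzero polynomial precisely when this coset contains a vector with nonnegative coordinates; the polynomial weight $(t+1)\cdots(t+s)$ does not kill any otherwise-nonzero term. Since distinct $s$'s produce monomials in distinct cosets of $B_{GC}^{\mathfrak{gl}_8}$, no cancellation occurs across the outer sum over $s$, so $F_\gamma\neq 0$ iff some such $s$ exists. A short lattice calculation, using that each $r_\alpha$ differs from $v_\alpha$ by a vector expressible via $B_{GC}^{\mathfrak{gl}_8}$, then reduces the criterion to the stated condition on the coset $\gamma+B_{GC}^{\mathfrak{gl}_8}$ itself.

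Next I would verify that $F_\gamma$ is annihilated by \eqref{agkzsys}. Fix a basis vector $v_\alpha$ and its associated triple operator $\mathcal{O}_\alpha$. Applied to a single monomial $A^{\gamma+tv}$, $\mathcal{O}_\alpha$ produces three terms: the first two (from $\partial^2/(\partial A_{i,X}\partial A_{j,y,X})-\partial^2/(\partial A_{j,X}\partial A_{i,y,X})$) realize the lattice shift by $v_\alpha$ and, inside $J^s_{\gamma-sr}$, telescope in $t$ thanks to the factors $(t+1)\cdots(t+s)$, exactly as in the GKZ calculation. The third term (from $\partial^2/(\partial A_{y,X}\partial A_{i,j,X})$) realizes the shift by $r_\alpha$, and this is precisely the reason the outer alternating sum with factors $(-1)^s/s!$ was introduced in \eqref{fgamma}: the contribution of the $s$-th outer summand cancels against the contribution of the $(s+1)$-st using the identity $(t+1)\cdots(t+s)\cdot(t+s+1)=(t+1)\cdots(t+s+1)$. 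That the operators $\mathcal{O}_\beta$ corresponding to non-basis elements of $B_{GC}^{\mathfrak{gl}_8}$ also annihilate $F_\gamma$ follows from properties (a)--(c) preceding Definition \ref{porozd}, which reduce them to the basis operators modulo division by monomials.

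Finally, for the basis property: linear independence of the nonzero $F_\gamma$ across different cosets $\gamma+B_{GC}^{\mathfrak{gl}_8}$ is immediate, since the monomials occurring in $F_\gamma$ all lie in one coset and different cosets are disjoint. Spanning is a standard leading-monomial argument: given an arbitrary polynomial solution of the A-GKZ system, its lowest monomial (in a suitable term order) identifies a coset of the lattice, and subtracting a scalar multiple of the corresponding $F_\gamma$ strictly decreases this leading term. The main obstacle I expect is the telescoping cancellation in the second step, since the interaction between the two lattice shifts $v_\alpha$ and $r_\alpha$ requires careful bookkeeping of the factorial weights and signs across both the inner $t$-sum and the outer $s$-sum; once that identity closes up, the remaining verifications are formal.
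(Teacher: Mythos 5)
Your three-step plan (non-vanishing criterion, verification of the equations, triangular spanning argument) is the right shape, and it is in fact more detailed than the paper itself, which gives no argument beyond asserting that the lemma ``immediately follows from the explicit formula'' \eqref{fgamma} and that the reasoning of \cite{a1} transfers verbatim to the non-standard basis. However, your execution rests on a lattice claim that is false and that moreover contradicts another claim you make. You assert that ``each $r_\alpha$ differs from $v_\alpha$ by a vector expressible via $B_{GC}^{\mathfrak{gl}_8}$'', i.e.\ that $v_\alpha-r_\alpha\in B_{GC}^{\mathfrak{gl}_8}$ and hence $r_\alpha\in B_{GC}^{\mathfrak{gl}_8}$. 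This is not so: $v_\alpha-r_\alpha=e_{i,X}-e_{y,X}-e_{i,j,X}+e_{j,y,X}$ is an exchange vector for the triple taken in the wrong order, and such vectors do not lie in the lattice generated by the ordered generators \eqref{vagl}. Concretely, the functional $\phi(e_Z)=1$ if $-4\in Z$ and $2\notin Z$, and $\phi(e_Z)=0$ otherwise, vanishes on every generator \eqref{vagl} (check the two cases $-4\in X$ and $-4\notin X$, using that $-4,2$ are the two $\prec$-smallest indices), yet $\phi(v_\alpha-r_\alpha)=1$ and $\phi(r_\alpha)=-1$ for the triple $(-4,2,3)$, $X=\emptyset$. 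Note also that the paper's later constructions presuppose exactly this: formula \eqref{efd}, the support statement ``$\delta=\omega-sr$ mod $B_{GC}^{\mathfrak{gl}_8}$'', and the order \eqref{por} would all trivialize if $r$ lay in the lattice. And your own statement that ``distinct $s$'s produce monomials in distinct cosets'' is incompatible with $r_\alpha\in B_{GC}^{\mathfrak{gl}_8}$; you cannot use both.

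Two steps therefore break as written. First, the non-vanishing criterion: without $r_\alpha\in B_{GC}^{\mathfrak{gl}_8}$ your ``short lattice calculation'' does not reduce ``some coset $\gamma-sr+B_{GC}^{\mathfrak{gl}_8}$ contains a nonnegative vector'' to the stated condition on $\gamma+B_{GC}^{\mathfrak{gl}_8}$ itself. The ``if'' direction is fine (the $s=0$ term is $\mathcal{F}_\gamma$ and, the cosets being distinct, it cannot be cancelled by higher-$s$ terms), but the ``only if'' direction is precisely the delicate point: a term $J^s_{\gamma-sr}$ with $s\neq 0$ can be a nonzero polynomial even when $\gamma+B_{GC}^{\mathfrak{gl}_8}$ contains no nonnegative point, so one needs the finer analysis of \cite{a1} (in particular a suitable normalization of the representative $\gamma$), not a coset identification. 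Second, linear independence: the monomials of $F_\gamma$ do not all lie in one coset — the support is $\bigcup_{s\geq 0}(\gamma-sr+B_{GC}^{\mathfrak{gl}_8})$ — so supports of different $F_\gamma$ overlap and independence is not ``immediate from disjointness''. The correct route is triangularity: the component of $F_\gamma$ supported on $\gamma+B_{GC}^{\mathfrak{gl}_8}$ is exactly $\mathcal{F}_\gamma$, the remaining components are lower with respect to the order \eqref{por}, and the $\mathcal{F}_\gamma$ are independent by Lemma \ref{lsolgkz}. Your telescoping verification of \eqref{agkzsys} and the leading-monomial spanning sketch are in the right spirit, but as it stands the proposal has a genuine gap in both the criterion and the independence argument.
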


This Lemma immediately follows from the explicit formula for $F_{\gamma}(A)$.

\subsection{A-GKZ model of representations.}

Introduce the action of $\mathfrak{gl}_8$ on the variables $A_X$ by a rule similar to \eqref{edet1}:

\begin{equation}
\label{edet2}
E_{i,j}A_{i_1,...,i_k}=\begin{cases} A_{\{i_1,...,i_k\}\mid_{j\mapsto i}},\,\,\, j\in \{i_1,...,i_k\},\\ 0 \text{ otherwise } \end{cases}
\end{equation}
By Leibniz's rule, this action extends to an action of $\mathfrak{gl}_8$ on the space of polynomials in the variables $A_X$.

An important property of the A-GKZ system is as follows.

\begin{lemma}
	\label{ll8}
	The space of polynomial solutions $Sol_{AGKZ}$ of the A-GKZ system is a model of the $\mathfrak{gl}_8$ representations, that is, a direct sum of all irreducible finite-dimensional representations of $\mathfrak{gl}_8$ taken with multiplicity $1$.
\end{lemma}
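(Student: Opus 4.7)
The plan is to identify $Sol_{AGKZ}$ with the graded dual of the homogeneous coordinate ring of the flag variety of $GL(8)$, and then to invoke the classical fact that this coordinate ring is a model for finite-dimensional representations of $\mathfrak{gl}_8$.

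First I would verify that $Sol_{AGKZ}$ is a $\mathfrak{gl}_8$-submodule of the polynomial algebra $\mathbb{C}[A_Z]$. Each generator \eqref{agkzsys} is obtained by substituting $\partial/\partial A_Z$ for $a_Z$ in a three-term Pl\"ucker relation $P_v:=a_{i,X}a_{j,y,X}-a_{j,X}a_{i,y,X}+a_{y,X}a_{i,j,X}$, and these relations span a $GL(8)$-submodule of $\mathbb{C}[a_Z]$ because the Pl\"ucker ideal is $GL(8)$-stable. A short commutator calculation of $[E_{i,j},\mathcal{O}_v]$ using \eqref{edet2} then shows that each commutator is again a linear combination of operators of the same type; hence $Sol_{AGKZ}$ is preserved by $\mathfrak{gl}_8$.

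Next I would set up the apolar pairing $\langle f,g\rangle := (f(\partial/\partial A)\,g)|_{A=0}$ on $\mathbb{C}[A_Z]$, which is nondegenerate on every homogeneous component. Under this pairing multiplication by $P_v$ is adjoint to the operator $\mathcal{O}_v$, so a polynomial $F$ lies in $Sol_{AGKZ}$ if and only if, in every degree, it is orthogonal to the ideal generated by the $P_v$'s. Since in type $A$ the three-term Pl\"ucker relations generate the full Pl\"ucker ideal $I_{\mathrm{Pl}}$, this gives a graded $\mathfrak{gl}_8$-isomorphism
\[
Sol_{AGKZ}\;\cong\;\bigl(\mathbb{C}[A_Z]/I_{\mathrm{Pl}}\bigr)^{*}.
\]
By Lemma \ref{lms} applied in the $\mathfrak{gl}_8$-setting (where only Pl\"ucker relations are present), the right-hand side is the coordinate ring of the affine cone over the flag variety of $GL(8)$, equivalently of the basic affine space $GL(8)/U$. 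The classical Peter-Weyl/Borel-Weil decomposition of this ring contains each finite-dimensional irreducible $\mathfrak{gl}_8$-module with multiplicity exactly one; dualising preserves multiplicity one (the involution $\lambda\mapsto -w_0\lambda$ permutes dominant weights), which yields the lemma.

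The main obstacle, in my view, is the careful handling of the ``generated in the sense of Definition~\ref{porozd}'' clause: one must show that saturating the differential-operator ideal by monomial divisors in $\partial/\partial A$ matches the plain Pl\"ucker ideal under the apolar correspondence, so that no extra solutions are lost and none are wrongly admitted. What rescues the argument is that $I_{\mathrm{Pl}}$ is prime and the flag variety of $GL(8)$ is not contained in any coordinate hyperplane of the Pl\"ucker embedding, so $I_{\mathrm{Pl}}$ is already saturated with respect to any nonzero monomial in the $A_Z$'s; this translates exactly into the required property on the differential-operator side. The remaining ingredients -- the commutator check and the appeal to Borel-Weil -- are then routine.
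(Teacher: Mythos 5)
There is a genuine gap at the decisive step. Your identification $Sol_{AGKZ}\cong\bigl(\mathbb{C}[A]/I_{\mathrm{Pl}}\bigr)^{*}$ rests on the claim that the three-term relations $P_v=A_{i,X}A_{j,y,X}-A_{j,X}A_{i,y,X}+A_{y,X}A_{i,j,X}$ generate the full Pl\"ucker ideal of the flag cone, and that claim is false as an ordinary ideal-theoretic statement. Every $P_v$ is a sum of products of a coordinate of size $|X|+1$ with a coordinate of size $|X|+2$; hence, specializing to zero all variables $A_Z$ except those of one fixed size kills every element of the ideal $\langle P_v\rangle$, while a pure Grassmannian relation such as $A_{12}A_{34}-A_{13}A_{24}+A_{14}A_{23}$ survives that specialization. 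So $\langle P_v\rangle\subsetneq I_{\mathrm{Pl}}$, and your subsequent saturation remark does not repair this: what you prove is that $I_{\mathrm{Pl}}$ is monomially saturated, whereas what is needed is that saturating the \emph{smaller} ideal $\langle P_v\rangle$ (in the differential-operator picture, the closure prescribed by Definition \ref{porozd}) cuts the solution space down exactly to the apolar dual of $I_{\mathrm{Pl}}$ — neither more nor less. That statement is precisely the nontrivial content the paper imports from \cite{a1}: its proof runs via the two inclusions \eqref{vkl1} and \eqref{vkl2}, i.e.\ $Sol_{AGKZ}\supset Sol_{\bar I_{\mathfrak{gl}_8}}$ because the A-GKZ generators come from relations in $I_{\mathfrak{gl}_8}$, and $Sol_{AGKZ}\subset Sol_{\bar I_{\mathfrak{gl}_8}}$ because among the generators \eqref{agkzsys} are those attached to a basis of the standard Gelfand--Tsetlin lattice, for which \cite{a1} proves that the generated (saturated) system already has solution space $Sol_{\bar I_{\mathfrak{gl}_8}}$, together with the result of \cite{a1} that $Sol_{\bar I_{\mathfrak{gl}_8}}$ is a model.

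A secondary weak point: your argument for $\mathfrak{gl}_8$-invariance of $Sol_{AGKZ}$ ("the Pl\"ucker ideal is $GL(8)$-stable, hence the span of the $P_v$ is stable") is a non sequitur — stability of $I_{\mathrm{Pl}}$ says nothing about stability of the span of the particular three-term quadrics, and the commutator $[E_{i,j},\mathcal{O}_v]$ produces polarized combinations that are not visibly of the same form. In the paper invariance is not needed separately for this Lemma: it falls out once $Sol_{AGKZ}=Sol_{\bar I_{\mathfrak{gl}_8}}$ is established, since $\bar I_{\mathfrak{gl}_8}$ comes from the invariant ideal $I_{\mathfrak{gl}_8}$ (this is made explicit for $\mathfrak{g}_2$ in Lemma \ref{prdl}). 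Your apolarity pairing and the Peter--Weyl/multiplicity-one part of the argument are fine, but they are the easy half; the equality of the A-GKZ solution space with the inverse system of the full ideal of relations is the hard half, and as written it is assumed rather than proved.
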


\proof

Let $I_{\mathfrak{gl_8}}\subset\mathbb{C}[A]$ be the ideal generated by the relations between the determinants for the group $GL_8$. We make the substitution $A_X\mapsto \frac{\partial }{\partial A_X}$. We obtain the ideal $\bar{I}_{\mathfrak{gl_8}}\subset\mathbb{C}[\frac{\partial }{\partial A}]$. Consider its space of polynomial solutions $Sol_{\bar{I}_{\mathfrak{gl}_8}}$. Since the A-GKZ system is based on a set of relations between determinants, one has

\begin{equation}\label{vkl1}Sol_{AGKZ}\supset Sol_{\bar{I}_{\mathfrak{gl}_8}}.\end{equation}
In \cite{a1}, it is shown that $Sol_{\bar{I}_{\mathfrak{gl}_8}}$ is a model of representations, so essentially we need to show that $Sol_{AGKZ}= Sol_{\bar{I}_{\mathfrak{gl}_8}}$.

 Let's start the proof of this fact by associating a vector $v'_{\alpha}$ with each generator \eqref{vagl} according to the following principle. We order the indices $i,j,y$ according to the natural order. Let $i_1,i_2,i_3$ denote the indices $i,j,y$ arranged in ascending order: $i_1<i_2<i_3$, $\{i_1,i_2,i_3\}=\{i,j,y\}$. Let

\begin{equation}
\label{vapr}
v'_{\alpha}=e_{i_1,X}-e_{i_2,X}-e_{i_1,i_3,X}+e_{i_2,i_3,X}
\end{equation}

The lattice $\mathbb{Z}<v'_{\alpha}>$ is the Gelfand-Tsetlin lattice associated with $\mathfrak{gl}_8$ in the sense of \cite{a1}. As shown in \cite{a1}, if we specifically choose a basis among the vectors \eqref{vapr} and consider the equations \eqref{agkzsys} only for these basis generators, then the solution space of the system generated by these equations will coincide with $Sol_{\bar{I}_{\mathfrak{gl}_8}}$. We are considering the A-GKZ system, which includes all these equations, so we have

\begin{equation}\label{vkl2}Sol_{AGKZ}\subset Sol_{\bar{I}_{\mathfrak{gl}_8}}.\end{equation}

Hence $Sol_{AGKZ}= Sol_{\bar{I}_{\mathfrak{gl}_8}}$.

\endproof

The space of polynomial solutions of the A-GKZ system is called the A-GKZ model of finite-dimensional irreducible representations of $\mathfrak{gl}_8$.

\subsection{Solutions of the GKZ system and functional realization.} 

\begin{lemma} Let us take non-zero different $\Gamma$-series described in Lemma \ref{lsolgkz} and forming a basis in the space of solutions of the GKZ system. 
 
 Then, by substituting the determinants $a_X$ for the independent variables $A_X$, we obtain a basis for the functional realization.
 
\end{lemma}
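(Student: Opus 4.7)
The plan is to factor the lemma through the quotient ring $\mathbb{C}[A_X]/I_{\mathfrak{gl}_8}$, where $I_{\mathfrak{gl}_8}$ is the Pl\"ucker ideal. By classical invariant theory this quotient is identified with the subalgebra of $Fun(G)$ generated by the determinants $a_X$, i.e.\ with the functional realization. The substitution $A_X\mapsto a_X$ is precisely the quotient map $\pi:\mathbb{C}[A_X]\to\mathbb{C}[A_X]/I_{\mathfrak{gl}_8}$, so it suffices to prove that the images $\pi(\mathcal{F}_\gamma)$ of the GKZ basis solutions form a basis of this quotient.

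First, I would check well-definedness: by Lemma~\ref{lsolgkz}, each non-zero $\mathcal{F}_\gamma(A)$ is a polynomial with finitely many terms, so $\mathcal{F}_\gamma(a)$ is a bona fide element of $Fun(G)$.

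Second, I would run a leading-term argument. Normalise so that the reference vector $\gamma$ in the $\Gamma$-series is the lex-minimal non-negative representative of its coset $\gamma+B_{GC}^{\mathfrak{gl}_8}$, with the lex order on monomials induced by the order~\eqref{chpor} on $\{-4,\dots,4\}$. Then $A^{\gamma}/\gamma!$ is the lex-minimal term of $\mathcal{F}_\gamma$, while all other terms $A^{\gamma+b}$ with $b\in B_{GC}^{\mathfrak{gl}_8}\setminus\{0\}$ are strictly later in lex. Passing to $\pi(\mathcal{F}_\gamma)$, each non-standard monomial $a^{\gamma+b}$ is straightened by means of the three-term Pl\"ucker relation attached to the generator $v_\alpha=e_{i,X}-e_{j,X}-e_{i,y,X}+e_{j,y,X}$ of~\eqref{vagl}. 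Crucially, the extra Pl\"ucker term $a_{y,X}a_{i,j,X}$ is again lex-later than both $a_{i,X}a_{j,y,X}$ and $a_{j,X}a_{i,y,X}$, because $i\prec j\prec y$ in~\eqref{chpor}. Consequently the standardisation of $\pi(\mathcal{F}_\gamma)$ still has $a^{\gamma}$ as its unique lex-minimal standard monomial.

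Third, distinct minimal $\gamma$'s give distinct leading standard monomials, so the images $\pi(\mathcal{F}_\gamma)$ are linearly independent. For spanning, I would invoke the standard-monomial theorem for the flag variety of $GL_8$: the standard monomials in the $a_X$ form a basis of $\mathbb{C}[A_X]/I_{\mathfrak{gl}_8}$, and they are precisely the non-negative lex-minimal representatives of the cosets of $B_{GC}^{\mathfrak{gl}_8}$. Hence every standard monomial appears as the leading term of a unique $\pi(\mathcal{F}_\gamma)$, and triangularity of the transition matrix finishes the argument.

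The main obstacle is the second step: verifying that the non-standard Gelfand-Tsetlin lattice $B_{GC}^{\mathfrak{gl}_8}$ is correctly aligned with the Pl\"ucker straightening under the order~\eqref{chpor}, so that the cosets of this lattice match one-to-one with the standard-monomial basis of the functional realization. This is a combinatorial exercise matching the two-term lattice generators~\eqref{vagl} against the three-term Pl\"ucker relations, and is the only non-formal part of the argument.
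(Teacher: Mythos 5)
The gap is that your argument's two load-bearing steps are exactly the ones you leave unproved, and one of them fails as stated. First, the asserted bijection between Gelfand--Tsetlin cosets and standard monomials --- that the lex-minimal non-negative representative of each coset of $B_{GC}^{\mathfrak{gl}_8}$ is a standard exponent for a straightening law adapted to the order \eqref{chpor}, and that every standard monomial arises this way --- is precisely the combinatorial heart of the lemma, and you explicitly defer it as ``a combinatorial exercise''. A proof whose only non-formal part is its central claim is a plan, not a proof. Second, and more seriously, the triangularity step is unsound in the form you give it. When you straighten a term $a^{\gamma+b}$ with $b\in B_{GC}^{\mathfrak{gl}_8}\setminus\{0\}$, the three-term relation replaces the exponent $\delta$ by $\delta-v_\alpha$ (which stays in the \emph{same} coset $\gamma+B_{GC}^{\mathfrak{gl}_8}$) and by $\delta+r_\alpha-v_\alpha$ with $r_\alpha$ as in \eqref{ra0} (which leaves it). The descent inside the coset can land on $a^{\gamma}$ itself, so the coefficient of your purported leading standard monomial in $\pi(\mathcal{F}_\gamma)$ is a sum of infinitely-iterated, signed straightening contributions added to $1/\gamma!$; lex-minimality of $\gamma$ by itself does not show this sum is non-zero, and ruling out such cancellations is exactly what requires the explicit expansion machinery (the ``Basic Lemma'' and formulas of type \eqref{efd}) in \cite{a1}. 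A smaller but real defect: straightening cannot in general be performed using only the three-term Pl\"ucker relations attached to the generators \eqref{vagl}; one needs the full multi-term relations, and the compatibility of that rewriting with the non-standard column order \eqref{chpor} (transport of the standard monomial theory along the relabelling permutation) is also left implicit.

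For comparison, the paper does not argue this at all: its proof is a one-sentence reduction to the corresponding statement for the standard Gelfand--Tsetlin lattice in \cite{a1}, asserting that the reasoning there repeats verbatim for the lattice built from \eqref{chpor}. So what you are attempting is, in effect, a reconstruction of that cited argument by standard monomial theory; that is a legitimate and potentially more self-contained route, but to make it work you must (i) actually prove the coset--standard-monomial correspondence for the non-standard lattice, and (ii) replace the leading-term heuristic by an argument controlling the coefficient of $a^{\gamma}$ (for instance by showing all contributions to it have the same sign, as happens in small cases, or by an independence argument that does not rely on a single leading monomial).
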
 The proof of this statement is a verbatim repetition of the reasoning in the case of the standard Gelfand-Tsetlin lattice from \cite{a1}.
Moreover, the following formula holds (see \cite{a1})

\begin{equation}
\label{efd}
E_{i,j}\mathcal{F}(a)=\sum_{X}\sum_{s\in \mathbb{Z}_{\geq 0}^k}c_{X,s}\mathcal{F}_{\gamma-e_{j,X}+e_{i,X}+sr}(a),
\end{equation}

where $X$ runs over all possible subsets that do not contain $i,j$.






\section{GKZ and A-GKZ systems for the algebra $\mathfrak{g}_2$}
\label{agkz}

\subsection{  The Gelfand-Tsetlin lattice for the algebra  $\mathfrak{g}_2$}

Consider, as before, the space $\mathbb{C}^N$, $N=2^8-2$, whose coordinates $A_X$ are indexed by proper subsets $X\subset \{-4,...,-1,1,...,4\}$. It will again be convenient for us to consider the variables $A_X$ to be antisymmetric under the permutations of $X$.

 The Gelfand-Tsetlin lattice for the algebra $\mathfrak{g}_2$ is constructed in several steps.

{\bf 1)} First, we take the lattice $B_{GC}^{\mathfrak{gl}_8}\subset \mathbb{Z}^N$ for the algebra $\mathfrak{gl}_8$ as defined in Section \ref{nonstgl8}.
In this lattice, there is a set of generators of the form

\begin{equation}
\label{va}
v_{\alpha}=e_{i,X}-e_{j,X}-e_{i,y,X}+e_{j,y,X},
\end{equation}

where $i\prec j \prec y$, the order $\prec$ is defined in \eqref{chpor}.

{\bf 2)} The generating 

\begin{equation}
\label{ub}
u_{\beta}=e_{X}-e_{\widehat{-X}}.
\end{equation}

The lattice generated by the vectors $v_{\alpha}$, $u_{\beta}$ is called the Gelfand-Cetlin lattice for the algebra $\mathfrak{o}_8$ and is denoted by $B_{GC}^{\mathfrak{o}_8}$ \cite{a2}.

{\bf 3)} Add the generators of the form

\begin{equation}
\label{wg}
d_{\gamma}=e_{{\bf 1}} \text{ или } \sum_{X}\omega_X e_{-X}.
\end{equation} 

In this case, the vector $e_{{\bf 1}}$ is defined as follows. Let ${\bf 1}=\sum_{i=-4}^4 c_i e_i$ be the expression of the unit in the octonion algebra in terms of the coordinate vectors $e_{-4},...,e_{4}$ in the standard representation $\mathfrak{o}_8$ in the split realization, which is identified with $\mathbb{O}$, see section \ref{sg2}. Then we set $e_{{\bf 1}}=\sum_{i=-4}^4 c_i e_i$.

\begin{definition}
 The lattice generated by the vectors $v_{\alpha}$, $u_{\beta}$, $d_{\gamma}$ is called the Gelfand-Cetlin lattice for the algebra $\mathfrak{g}_2$ and is denoted by $B_{GC}^{\mathfrak{g}_2}$.
\end{definition}

\subsection{The GKZ system associated with $\mathfrak{g}_2$}

We write out explicitly the equations generating the GKZ system associated with the lattice $B_{GC}^{\mathfrak{g}_2}$, while making a slight modification to one of the equations.

\begin{align}
\begin{split}
\label{gkzg2}
&(\frac{\partial^2}{\partial A_{i,X}\partial A_{j,y,X}}-    \frac{\partial^2}{\partial A_{j,X}\partial A_{i,y,X}})\mathcal{F}=0,\\
&(\frac{\partial}{\partial A_X}-\pm\frac{\partial }{\partial A_{\widehat{-X}}})\mathcal{F}=0,\\
&\frac{\partial}{\partial A_{\bf 1}}\mathcal{F}=0,\\
&\sum_X\omega_X\frac{\partial }{\partial A_{-X}}\mathcal{F}=0.
\end{split}
\end{align}

The modification in \eqref{gkzg2} compared to the equations of the GKZ system for the lattice $B_{GC}^{\mathfrak{g}_2}$ consists in adding the sign $\pm$ from \eqref{znk} to the second equation. Despite this modification, we will call the system generated by the equations \eqref{gkzg2} the GKZ system.

The third equation is written implicitly. If $e_{{\bf 1}}=\sum_{i=-4}^4 c_i e_i$, then the equation is explicitly written as follows:

$$
\sum_i c_i \frac{\partial}{\partial A_i}\mathcal{F}=0.
$$

If there were no modification, its formal solutions would be written as $\Gamma$-series in powers of $A_X$:

\begin{equation}
\mathcal{F}_{\gamma}(A)=\sum_{x\in \gamma+B^{\mathfrak{g}_2}_{GC}} \frac{A^{x}}{x!}.
\end{equation}

Due to the presence of a modification, the solution formula also needs to be adjusted. To write the adjusted solution formula,
it will be convenient for us to introduce more detailed notation. Let $v_1,...,v_{k_1}$ be independent generating lattices of $B_{GC}^{\mathfrak{gl}_8}$ of type \eqref{va}\footnote{previously, instead of $k_1$, we used the index $k$ in the same situation, but now, for greater uniformity, we use $k_1$} (see the method of selecting them among all generating lattices in \cite{a1}). Let $u_1,...,u_{k_2}$ be independent vectors of the form \eqref{ub}, and let $d_1,...,d_{k_3}$ be independent vectors of the form \eqref{wg}, such that the selected vectors $v,u,d$ form a basis for $B_{GC}^{\mathfrak{g}_2}$. Then

\begin{equation}
\mathcal{F}^{\mathfrak{g}_2}_{\gamma}(A)=\sum_{t^i\in\mathbb{Z}^{k_i},i=1,2,3}  \big(\prod_{\kappa=1}^{k_2}  (\pm_{\kappa} 1)^{t^2_{\kappa}}  \big)  \frac{A^{\delta+t^1v+t^2u+t^3d}}{(\delta+t^1v+t^2u+t^3d)!},
\end{equation}

where $t^1v,t^2u,t^3d$ are understood in the sense of multi-index notation (for example, $t^1v:=t^1_1v_1+...+t^1_{k_1}v_{k_1}$), and $\kappa$ is an index that enumerates all possible vectors of the type $u$.

 The series $\mathcal{F}_{\gamma}(A)$ are, in general, formal solutions of the PDE. They will be polynomials for $\gamma\in\mathbb{Z}^N$. In this case, $\mathcal{F}_{\gamma}(A)$ essentially depends only on the class $\gamma$ $mod B_{GC}^{\mathfrak{g}_2}$.

 The polynomial will be nonzero if there is a vector in the shifted lattice $\gamma+B_{GC}^{\mathfrak{g}_2}$ with only nonnegative coordinates. Following the same approach as in \cite{a2}, we introduce the following definition.

\begin{definition}
	\label{dgc}
	
	Let us call the Gelfand-Tsetlin diagram for $\mathfrak{g}_2$ the class of an integer vector $\gamma$ $mod B_{GC}^{\mathfrak{g}_2}$ such that in the shifted lattice $\gamma+B_{GC}^{\mathfrak{g}_2}$ there is a vector with only non-negative coordinates.

\end{definition}

The following statement holds.

\begin{lemma}
 The basis in the space of polynomial solutions is formed by $\mathcal{F}^{\mathfrak{g}_2}_{\gamma}(A)$, where $\gamma$ $mod B_{GC}^{\mathfrak{g}_2}$ are all possible distinct Gelfand-Tsetlin diagrams.
\end{lemma}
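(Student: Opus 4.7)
The argument follows the template of Lemma \ref{lsolgkz} and its $\mathfrak{o}_8$ counterpart in \cite{a2}, adapted to the sign-twisted second equation of \eqref{gkzg2}. It proceeds in four steps: (i) verify that each $\mathcal{F}^{\mathfrak{g}_2}_{\gamma}$ is a formal solution; (ii) identify when it is a nonzero polynomial; (iii) establish linear independence; (iv) prove spanning.

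For step (i), each family of equations in \eqref{gkzg2} comes from one of the three generator types \eqref{va}, \eqref{ub}, \eqref{wg} of $B_{GC}^{\mathfrak{g}_2}$. Applying the Plucker operator associated to $v_\alpha$ amounts to the shift $t^1\mapsto t^1+e_\alpha$ under the summation, which leaves the series unchanged by reindexing and so gives cancellation. Applying the twisted first-order operator associated to $u_\kappa$ shifts $t^2\mapsto t^2+e_\kappa$ and extracts exactly the factor $\pm_\kappa$ of \eqref{znk}; this is precisely why the sign weight $(\pm_\kappa 1)^{t^2_\kappa}$ has been inserted in the definition of $\mathcal{F}^{\mathfrak{g}_2}_\gamma$. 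The two first-order operators coming from $e_{\bf 1}$ and $\sum_X\omega_X e_{-X}$ are handled identically, via shifts in the $t^3$ component. Step (ii) follows directly from the factorial in the denominator: the $A^x$-term is nonzero iff $x$ has only nonnegative coordinates, matching Definition \ref{dgc}. Step (iii) is immediate, since nonzero series attached to distinct cosets $\gamma + B_{GC}^{\mathfrak{g}_2}$ have disjoint monomial supports and therefore cannot cancel against each other.

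The substantive step is (iv). Any polynomial solution decomposes into its coset-homogeneous pieces indexed by $\mathbb{Z}^N/B_{GC}^{\mathfrak{g}_2}$, and each piece is again a polynomial solution by the linearity of \eqref{gkzg2}. Within a fixed coset, the equations of \eqref{gkzg2} provide first- and second-order recurrences that determine the coefficient of $A^{x+b}$ in terms of the coefficient of $A^x$ for each lattice generator $b$; tracking signs shows these recurrences reproduce exactly the coefficient formula of $\mathcal{F}^{\mathfrak{g}_2}_\gamma$. Hence the coset solution space is at most one-dimensional, spanned by $\mathcal{F}^{\mathfrak{g}_2}_\gamma$ whenever this series is nonzero. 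The main obstacle is checking that the recurrences are globally consistent: traversing any closed loop in the lattice must reproduce the same scaling factor, so the signs $\pm_\kappa$ from \eqref{znk} and the inserted weight $(\pm_\kappa 1)^{t^2_\kappa}$ must cohere when $u_\kappa$ is expressed through combinations of $v_\alpha$'s and $d_\gamma$'s inside $B_{GC}^{\mathfrak{g}_2}$. This reduces to a finite combinatorial verification on the generating set, parallel to the one carried out in \cite{a2} for the $\mathfrak{o}_8$ lattice, with the extra generators $d_\gamma$ contributing only a first-order scalar equation whose compatibility is immediate.
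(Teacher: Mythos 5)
Your overall outline (nonvanishing criterion from the factorials, linear independence from disjoint supports of distinct cosets, per-coset analysis for spanning) is in the same spirit as the paper, which simply declares the statement immediate from the explicit formula for $\mathcal{F}^{\mathfrak{g}_2}_{\gamma}(A)$, as in the $\mathfrak{gl}_8$ case of Lemma \ref{lsolgkz}. However, your execution has a genuine gap precisely at the point where the $\mathfrak{g}_2$-specific structure enters: the third and fourth equations of \eqref{gkzg2}, i.e.\ $\sum_i c_i\frac{\partial}{\partial A_i}\mathcal{F}=0$ and $\sum_X\omega_X\frac{\partial}{\partial A_{-X}}\mathcal{F}=0$, are \emph{not} binomial (toric) operators attached to a single lattice shift, unlike the Pl\"ucker operators and the sign-twisted operators of the second line. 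In step (i) you claim they are ``handled identically, via shifts in the $t^3$ component,'' but applying $\sum_X\omega_X\partial_{A_{-X}}$ to the series produces the signed sum $\sum_X\omega_X\mathcal{F}^{\mathfrak{g}_2}_{\gamma-e_{-X}}$, whose terms are supported on \emph{different} cosets $\gamma-e_{-X}+B_{GC}^{\mathfrak{g}_2}$; no reindexing of $t^3$ (which shifts by the whole vector $d_\gamma=\sum_X\omega_X e_{-X}$, not by the individual $e_{-X}$) can make these cancel, so the solution property for these equations requires a separate argument that you do not supply.

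The same defect undermines step (iv): for the $d$-type generators the coefficient relations extracted from \eqref{gkzg2} are many-term linear relations tying together the coefficients at the exponents $y+e_{-X}$ for all $X$, not two-term recurrences ``along $b=d_\gamma$,'' so your claim that the equations determine the coefficient of $A^{x+b}$ from that of $A^x$ for every lattice generator $b$, and hence that the coset solution space is at most one-dimensional, does not follow as stated. Even for the genuinely binomial part, the uniqueness-per-coset argument needs connectivity of the recurrence graph on the nonnegative points of the coset, which is a separate issue from the loop-consistency you flag (consistency bears on existence, not on uniqueness). To actually close the lemma one needs either a direct support/recurrence analysis adapted to the non-binomial equations, or the route implicit in the paper via the analogous statements of \cite{a1}, \cite{a2} together with the relation between the system and the ideal of relations on $G_2$; your proposal, as written, silently assumes the non-binomial equations behave like toric ones, which is where it fails.
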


As in the case of $\mathfrak{gl}_8$, the statement immediately follows from the explicit formula for $\mathcal{F}^{\mathfrak{g}_2}_{\gamma}(A)$.

\subsection{The A-GKZ system associated with $\mathfrak{g}_2$}

The A-GKZ system associated with the lattice $\mathfrak{g}_2$ is a system of equations generated in the sense of definition \ref{porozd} by the equations

\begin{align}
\begin{split}
\label{agkzg2}
&(\frac{\partial^2}{\partial A_{i,X}\partial A_{j,y,X}}-    \frac{\partial^2}{\partial A_{j,X}\partial A_{i,y,X}} + \frac{\partial^2}{\partial A_{y,X}\partial A_{i,j,X}}  )F=0,\\
&(\frac{\partial}{\partial A_X}-\pm\frac{\partial }{\partial A_{\widehat{-X}}})   F=0,\\
&\frac{\partial}{\partial A_{\bf 1}}   F=0,\\
&\sum_X\omega_X\frac{\partial }{\partial A_{-X}}   F=0.
\end{split}
\end{align}

The third equation should be understood in the same way as in the GKZ system \eqref{gkzg2}.
The system generated by the first type of equation is the A-GKZ system for $\mathfrak{gl}_8$ in the sense of \cite{a1}. In this work, its formal solutions are written out, from which it is possible to construct formal solutions of \eqref{agkzg2} (by analogy with how it is done in \cite{a2}). 

 We introduce auxiliary functions. To define them, we associate with each vector $v_{\alpha}$ of the form \eqref{va}, as in Section \ref{gkgl}, a vector

\begin{equation}
\label{ra}
r_{\alpha}=e_{y,X}-e_{j,X}-e_{i,y,X}+e_{i,j,X},
\end{equation}

For $s\in\mathbb{Z}^{k_1}_{\geq 0}$ put

\begin{equation}
\label{jf}
J_{\gamma}^{s,\mathfrak{g}_2}(A)=\sum_{t^i\in\mathbb{Z}^{k_i},i=1,2,3}   \big(  \prod_{\kappa}(\pm_{\kappa} 1)^{t^2_{\kappa}}    \big) \frac{(t^1+1)...(t^1+s)}{s!}\frac{A^{\delta+t^1v+t^2u+t^3d}}{(\delta+t^1v+t^2u+t^3d)!},
\end{equation}

where we have again used the multi-index notation. Then the formal solution of \eqref{agkzg2} is a series

\begin{equation}
\label{ff}
F_{\gamma}^{\mathfrak{g}_2}(A)=\sum_{s\in\mathbb{Z}^{k_1}_{\geq 0}}\frac{(-1)^s}{s!}J_{\gamma-sr}^{s,\mathfrak{g}_2}(A).
\end{equation}
The fact that this series satisfies the first equation of the system \eqref{agkzg2} is actually verified in \cite{a2}. The fact that the other equations are also satisfied is verified by direct inspection. In this case, the rule for differentiating these series plays a significant role.

$$
\frac{\partial}{\partial A_X }F_{\gamma}^{\mathfrak{g}_2}(A)=F_{\gamma-e_X}^{\mathfrak{g}_2}(A).
$$
The series $F^{\mathfrak{g}_2}_{\gamma}(A)$ are, in general, formal solutions of the GKZ. They will be polynomials for $\gamma\in\mathbb{Z}^N$. However, $F^{\mathfrak{g}_2}_{\gamma}(A)$ depends on the choice of a representative in the class $\gamma$ $mod B_{GC}^{\mathfrak{g}_2}$.

The polynomial solution will be nonzero if the class $\gamma$ $mod B_{GC}^{\mathfrak{g}_2}$ is a Gelfand-Tsetlin diagram
Proceeding as in \cite{a1}, the following statement is proved:

\begin{lemma}
 The basis in the space of polynomial solutions is formed by $F^{\mathfrak{g}_2}_{\gamma}(A)$, where $\gamma$ $mod B_{GC}^{\mathfrak{g}_2}$ are all possible different Gelfand-Tsetlin diagrams.
\end{lemma}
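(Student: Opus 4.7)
The plan is to follow the strategy of the analogous $\mathfrak{gl}_8$ lemma in \cite{a1}, \cite{a2} and decompose the claim into three steps: (i) each $F^{\mathfrak{g}_2}_{\gamma}(A)$ corresponding to a Gelfand-Tsetlin diagram is a non-zero polynomial solution of \eqref{agkzg2}; (ii) distinct diagrams give linearly independent solutions; (iii) every polynomial solution of the A-GKZ system lies in their span. Steps (i) and (ii) are essentially bookkeeping on top of the explicit formula \eqref{ff}, while (iii) is where the real content sits.

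For step (i), the verification that $F^{\mathfrak{g}_2}_{\gamma}$ satisfies the first family of equations in \eqref{agkzg2} has already been reduced above to the $\mathfrak{gl}_8$ argument of \cite{a2}. For the remaining three families, which are linear in the partial derivatives, I would invoke the differentiation rule $\partial F^{\mathfrak{g}_2}_{\gamma}/\partial A_X = F^{\mathfrak{g}_2}_{\gamma - e_X}$: it turns each equation into the assertion that a suitable integer combination of coordinate vectors lies in $B_{GC}^{\mathfrak{g}_2}$, and those are precisely the generators $u_\beta$, $e_{\bf 1}$ and $\sum_X \omega_X e_{-X}$ that were built into the lattice in the previous section. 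Polynomiality and non-vanishing of $F^{\mathfrak{g}_2}_{\gamma}$ on a Gelfand-Tsetlin diagram are then immediate from \eqref{jf}: any summand with a negative entry of $\delta + t^1 v + t^2 u + t^3 d$ vanishes automatically, so the sum is finite and contains at least one non-zero term.

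For step (ii), I would fix, for each diagram, a representative $\gamma$ that is componentwise non-negative and minimal in some total order refining the componentwise one, and identify the monomial $A^{\gamma}$ as the unique leading term of $F^{\mathfrak{g}_2}_{\gamma}$ coming from $(t^1, t^2, t^3, s) = 0$. Distinct diagrams then yield distinct leading monomials, whence linear independence.

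The main obstacle is step (iii). Here the strategy of \cite{a1} is to exhibit a triangular change of basis between the $F^{\mathfrak{g}_2}_{\gamma}$ and the $\mathcal{F}^{\mathfrak{g}_2}_{\gamma}$ of the previous lemma: the alternating sum in \eqref{ff} expresses $F^{\mathfrak{g}_2}_{\gamma}$ as a combination of $\mathcal{F}^{\mathfrak{g}_2}_{\gamma - sr}$ that is upper triangular in the $sr$-direction thanks to the $(-1)^s/s!$ coefficients and the vectors \eqref{ra}. Since the $\mathcal{F}^{\mathfrak{g}_2}_{\gamma}$ already form a basis of the GKZ polynomial solution space by the previous lemma, it follows that the $F^{\mathfrak{g}_2}_{\gamma}$ span a space of the same dimension inside $Sol_{AGKZ}$. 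To conclude, one must show that this dimension accounts for all of $Sol_{AGKZ}$. I would do this by adapting the argument of Lemma \ref{ll8}: passing from the generators $v_\alpha$ to the natural-order generators $v'_\alpha$ of \eqref{vapr}, and adjoining the generators $u_\beta$ and $d_\gamma$, lets one identify $Sol_{AGKZ}$ with the solution space cut out by the full list of determinantal relations for $G_2$ from the previous section. Verifying that the resulting dimension count survives the presence of the extra Jacobi, $a_{\bf 1} = 1$ and $\omega$-invariant relations is the most delicate part.
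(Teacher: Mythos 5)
Your steps (i) and (ii) are consistent with what the paper (deferring to \cite{a1}) intends: polynomiality and non-vanishing on a Gelfand-Tsetlin diagram are read off from \eqref{jf}, the three linear families in \eqref{agkzg2} follow from the shift rule $\frac{\partial}{\partial A_X}F^{\mathfrak{g}_2}_{\gamma}=F^{\mathfrak{g}_2}_{\gamma-e_X}$ together with the fact that $u_{\beta}$, $e_{\bf 1}$ and $\sum_X\omega_Xe_{-X}$ were built into $B_{GC}^{\mathfrak{g}_2}$ (the sign factor $\prod_{\kappa}(\pm_{\kappa}1)^{t^2_{\kappa}}$ in \eqref{jf} is what absorbs the $\pm$ in the second equation), and linear independence comes from the leading monomials $A^{\gamma}/\gamma!$ once the family is ordered compatibly with \eqref{por}.

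The genuine gap is in step (iii). The identity \eqref{ff} is \emph{not} a triangular linear change of basis between the $F^{\mathfrak{g}_2}_{\gamma}$ and the $\mathcal{F}^{\mathfrak{g}_2}_{\gamma-sr}$: for $s\neq 0$ the correction $J^{s,\mathfrak{g}_2}_{\gamma-sr}$ carries the extra weight $(t^1+1)\cdots(t^1+s)/s!$ and is not a $\Gamma$-series, so the $F$'s do not lie in the span of the $\mathcal{F}$'s and you cannot infer "same dimension" from \eqref{ff}. What the support triangularity does give is linear independence, i.e.\ your step (ii); the spanning claim needs a separate \emph{upper} bound on the dimension of the polynomial solution space, and this is exactly the part you leave open as "the most delicate". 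The mechanism the paper relies on is that the A-GKZ system is a deformation of the GKZ system: with respect to the grading on exponents defined by the vectors $r_{\alpha}$ (equivalently by the order \eqref{por}), the operator in \eqref{agkzsys} has the operator in \eqref{gkzsys} as its initial form and the linear equations are their own initial forms, so the initial form of any polynomial A-GKZ solution is a polynomial GKZ solution; this gives $\dim Sol_{AGKZ}\leq\dim Sol_{GKZ}$ in each graded component, which together with (i)--(ii) and the preceding lemma closes the argument. Your alternative route through $Sol_{AGKZ}=Sol_{\bar{I}_{\mathfrak{g}_2}}$ and a representation-theoretic count would additionally require proving that the number of Gelfand-Tsetlin diagrams with fixed content equals the dimension of the corresponding irreducible representation, which sends you back to the GKZ-side count anyway.
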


\section{Representation model for the algebra $\mathfrak{g}_2$}
\label{sol}

Introduce the action of the algebra $\mathfrak{g}_2$ on the variables $A_X$. To do this, the generators of $\mathfrak{g}_2$ are expressed in terms of matrix units, and then the formula \eqref{edet2} is used to act on the variables $A_X$ by the matrix units $E_{i,j}$. By Leibniz's rule, it extends to an action of $\mathfrak{g}_2$ on the space of polynomials in the variables $A_X$.

 The main goal of this section is to prove the following Theorem.
 
\begin{theorem}
	\label{t2}
The space of polynomial solutions of the A-GKZ system for the algebra $\mathfrak{g}_2$ is a direct sum of all finite-dimensional irreducible representations of $\mathfrak{g}_2$.
\end{theorem}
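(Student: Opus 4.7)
The plan is to mirror the proof of Lemma \ref{ll8} for $\mathfrak{gl}_8$. Let $I_{\mathfrak{g}_2}\subset \mathbb{C}[A]$ denote the ideal of relations between the determinants $a_X$ for the group $G_2$, and let $\bar{I}_{\mathfrak{g}_2}\subset \mathbb{C}[\partial/\partial A]$ be the ideal obtained by the substitution $A_X\mapsto \partial/\partial A_X$. I would first prove the equality $Sol_{AGKZ}=Sol_{\bar{I}_{\mathfrak{g}_2}}$ by two inclusions, and then identify $Sol_{\bar{I}_{\mathfrak{g}_2}}$ with the functional realization constructed in Section \ref{fr}, which by construction contains every finite-dimensional irreducible $\mathfrak{g}_2$-representation through the embedding generated by the highest-weight vector $a_{-4}^\alpha a_{-4,-3}^\beta$ of weight $\alpha\omega_1+\beta\omega_2$.

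For the inclusion $Sol_{AGKZ}\supseteq Sol_{\bar{I}_{\mathfrak{g}_2}}$, I would check that each of the four families of generating equations in \eqref{agkzg2} is obtained by $A_X\mapsto \partial/\partial A_X$ from a generator of $I_{\mathfrak{g}_2}$. By the previous theorem characterizing the generators of $I_{\mathfrak{g}_2}$, the first equation arises from the triple (antisymmetrized) Pl\"ucker relation, the second from the Jacobi relation \eqref{sb}, and the last two from the $G_2$-specific relations \eqref{ds}. Solutions of $\bar{I}_{\mathfrak{g}_2}$ are therefore annihilated by all these operators, hence by the whole generated system in the sense of Definition \ref{porozd}.

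For the converse inclusion $Sol_{AGKZ}\subseteq Sol_{\bar{I}_{\mathfrak{g}_2}}$, I would apply the strategy of Lemma \ref{ll8}. After a basis change on $B_{GC}^{\mathfrak{g}_2}$, the A-GKZ system for $\mathfrak{g}_2$ includes all the antisymmetrized triple Pl\"ucker equations associated to the natural-order vectors $v'_\alpha$ from \eqref{vapr}. Lemma \ref{ll8} then yields $Sol_{AGKZ}\subseteq Sol_{\bar{I}_{\mathfrak{gl}_8}}$. It then remains to verify that the Jacobi equations and the $G_2$-specific linear equations in \eqref{agkzg2} cut out the smaller subspace $Sol_{\bar{I}_{\mathfrak{g}_2}}\subset Sol_{\bar{I}_{\mathfrak{gl}_8}}$; this follows from the fact that $\bar{I}_{\mathfrak{g}_2}$ is generated by $\bar{I}_{\mathfrak{gl}_8}$ together with the duals of these additional relations.

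The main obstacle I anticipate is the proper treatment of the inhomogeneous $G_2$-specific relations, most notably $a_{\bf 1}=1$ and the linear relations \eqref{ds}: on the operator side these become purely homogeneous operators such as $\partial/\partial A_{\bf 1}$ and $\sum_X\omega_X\partial/\partial A_{-X}$, while the original relations carry a nontrivial constant term. This mismatch must be resolved via the graded duality between the polynomial solution space in $\mathbb{C}[A]$ and the quotient $\mathbb{C}[A]/I_{\mathfrak{g}_2}$, where the differential operator captures the top-degree homogeneous component of the corresponding relation. Once the equality $Sol_{AGKZ}=Sol_{\bar{I}_{\mathfrak{g}_2}}$ is established, the theorem follows: via $A_X\mapsto a_X$, $Sol_{\bar{I}_{\mathfrak{g}_2}}$ is identified as a $\mathfrak{g}_2$-module with the functional realization, in which every finite-dimensional irreducible $V_{\alpha\omega_1+\beta\omega_2}$ appears by the argument of Section \ref{fr}.
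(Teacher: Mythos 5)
Your opening step, the equality $Sol_{AGKZ}=Sol_{\bar I_{\mathfrak{g}_2}}$ proved by two inclusions, is essentially the paper's Lemma \ref{prdl} (which the paper uses only to get $\mathfrak{g}_2$-invariance of the solution space). The genuine gap is in your final step, where you assert that via $A_X\mapsto a_X$ the space $Sol_{\bar I_{\mathfrak{g}_2}}$ ``is identified as a $\mathfrak{g}_2$-module with the functional realization.'' That identification is exactly what has to be proved, and the duality you invoke is not available off the shelf: $I_{\mathfrak{g}_2}$ is not homogeneous (the relations $a_{\bf 1}=1$ and \eqref{ds} carry constant terms), and replacing inhomogeneous generators by their top-degree components can change the quotient, so Macaulay-type graded duality between $\mathbb{C}[A]/I_{\mathfrak{g}_2}$ and the solution space of the barred ideal does not simply ``resolve the mismatch.'' Concretely, surjectivity is missing: to know that every highest weight $\alpha\omega_1+\beta\omega_2$ actually occurs in $Sol_{AGKZ}$ you must produce a highest-weight \emph{solution}, and the obvious preimage $A_{-4}^{\alpha}A_{-4,-3}^{\beta}$ of the functional highest vector $a_{-4}^{\alpha}a_{-4,-3}^{\beta}$ is \emph{not} a solution of \eqref{agkzg2} — it violates the Jacobi-type equations $(\frac{\partial}{\partial A_X}-\pm\frac{\partial}{\partial A_{\widehat{-X}}})F=0$. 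This is why the paper needs its Lemma \ref{ostnach}, exhibiting the corrected vectors $(A_{-4}+A_{-4,\dots,3})^{\alpha}(A_{-4,-3}+A_{-4,\dots,2})^{\beta}$ inside the solution space; nothing in your proposal plays this role.

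The second missing ingredient is the minimality/injectivity argument, which is what prevents $Sol_{AGKZ}$ from being larger than the sum of these canonical copies (and gives multiplicity one). The paper gets it from the scalar product \eqref{skp}: if $f\in Sol_{AGKZ}$ dies under $A_X\mapsto a_X$, then $f\in I_{\mathfrak{g}_2}$, hence $f(\frac{\partial}{\partial A})$ annihilates every solution, in particular $f(\frac{\partial}{\partial A})f(A)=0$, forcing $f=0$. Your proposal contains no substitute for this positivity argument. So the correct architecture, which your text only partially realizes, is: (1) your step 1 (= Lemma \ref{prdl}) gives invariance; (2) explicit highest-weight solutions as in Lemma \ref{ostnach} embed every irreducible; (3) the $f(\frac{\partial}{\partial A})f=0$ argument shows these embeddings exhaust the solution space. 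Items (2) and (3) are the substance of the theorem and are absent from your proof.
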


We will call such a direct sum A-GKZ a model of representations.

\proof
The proof of the Theorem consists of the following three steps. {\bf 1)} It is proved that the space of polynomial solutions is invariant under the action of $\mathfrak{g}_2$, which means that this space of solutions is a representation of $\mathfrak{g}_2$. {\bf 2)} A canonical embedding of all finite-dimensional irreducible representations into the space of solutions is constructed. {\bf 3)} It is shown that the direct sum of all canonical embeddings coincides with the solution space.

 Let's start implementing this program.

\subsection{Invariance of the solution space of the A-GKZ system}

\begin{lemma}
 \label{prdl}
 The set of solutions of the A-GKZ system is $\mathfrak{g}_2$-invariant.
\end{lemma}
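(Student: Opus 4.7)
The strategy is the standard commutator argument. I would show, for every generating operator $\mathcal{O}$ of the A-GKZ system \eqref{agkzg2} and every $E\in\mathfrak{g}_2$ (acting on $\mathbb{C}[A]$ as a first-order differential operator via \eqref{edet2} and the Leibniz rule), that
$$[\mathcal{O},E]\in\mathcal{I}_{AGKZ},$$
where $\mathcal{I}_{AGKZ}$ is the ideal generated by \eqref{agkzg2} in the sense of Definition \ref{porozd}. Granted this, for any polynomial solution $F$ one computes $\mathcal{O}(EF)=E(\mathcal{O}F)+[\mathcal{O},E]F=0$, so $EF$ is again a solution and the solution space is $\mathfrak{g}_2$-invariant.

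The four families of equations in \eqref{agkzg2} are precisely the symbols (under $a_X\mapsto\partial/\partial A_X$) of the four families of generators of the ideal $I_{\mathfrak{g}_2}$ of relations between determinants on $G_2$ exhibited in the preceding theorem: triple Pl\"ucker, Jacobi-sign, $a_{\mathbf{1}}=1$, and the $\omega$-contraction relation \eqref{ds}. By construction $I_{\mathfrak{g}_2}$ is $\mathfrak{g}_2$-invariant, since the $\mathfrak{g}_2$-action preserves $G_2\subset GL(8)$ and hence all of its defining relations. The content of the proof is to transfer this invariance through the symbol substitution, by checking the commutator relation separately on each family.

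For the triple Pl\"ucker family, the identity $[\mathcal{O},E]\in\mathcal{I}_{AGKZ}$ is the $\mathfrak{gl}_8$-equivariance established in \cite{a1} and already invoked in the proof of Lemma \ref{ll8}; it restricts to $\mathfrak{g}_2\subset\mathfrak{gl}_8$. For the Jacobi-sign family it is the $\mathfrak{o}_8$-equivariance proved in \cite{a2}, restricted to $\mathfrak{g}_2\subset\mathfrak{o}_8$. For the third family, writing $\partial/\partial A_{\mathbf{1}}=\sum_i c_i\,\partial/\partial A_i$ and $E=\sum_{p,q}d_{pq}E_{p,q}$, a direct computation yields $[E,\partial/\partial A_{\mathbf{1}}]=-\sum_j(\sum_i c_i d_{ij})\,\partial/\partial A_j$, whose coefficient vector is (up to sign) $E^t\mathbf{1}$ in the standard $8$-dimensional representation; this vanishes since $\mathbf{1}$ is $\mathfrak{g}_2$-fixed and the fixed line is preserved under the $\mathfrak{o}_8$-transposition $E\mapsto E^t$ (a consequence of $\Omega\mathbf{1}\propto\mathbf{1}$). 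For the fourth family, the analogous computation reduces $[E,\sum_X\omega_X\,\partial/\partial A_{-X}]$ to the contraction $\sum_X(E\cdot\omega)_X\,\partial/\partial A_{-X}$, which is zero by the infinitesimal form of the $G_2$-invariance of the tensor $\omega$ expressed in \eqref{sxa}.

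The main obstacle is the bookkeeping for the fourth family: the substitution $X\mapsto -X$, combined with the antisymmetry of $A_X$, makes the commutator expansion delicate, and one has to carefully match the index-shuffling produced by each summand of $E$ with the tensor identity \eqref{sxa}. Should residual terms of the form $\partial/\partial A_{\widehat{Y}}$ arise, they are converted to $\pm\partial/\partial A_{-Y}$ by the Jacobi-sign equations of the second family, keeping the remainder inside $\mathcal{I}_{AGKZ}$. Once the four commutator identities are in place, the $\mathfrak{g}_2$-invariance of $Sol_{AGKZ}$ follows and the Lemma is proved.
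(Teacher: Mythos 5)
Your overall strategy (equivariance of the defining operators plus a commutator computation) is reasonable, and your treatment of the third and fourth families --- reducing them to the $\mathfrak{g}_2$-fixedness of ${\bf 1}$ and the infinitesimal invariance \eqref{sxa} of the tensor $\omega$ --- is in the right spirit. But there is a genuine gap at the very first step: the A-GKZ system is \emph{not} the common kernel of the four displayed families \eqref{agkzg2}. By Definition \ref{porozd} it is the system attached to the larger set $\mathcal{I}_{AGKZ}$ that also contains every operator obtained from the ideal $I_1$ generated by \eqref{agkzg2} by dividing by a differential monomial. Your identities $[\mathcal{O},E]\in\mathcal{I}_{AGKZ}$ for the generators $\mathcal{O}$ give $\mathcal{O}(EF)=0$ for the generators, and hence for all of $I_1$; but for a divided operator $D'$ with $mD'=D\in I_1$ this yields only $m\bigl(D'(EF)\bigr)=0$ (equivalently, $m[D',E]=[D,E]-[m,E]D'$ applied to $F$ kills $D'(EF)$ only after the monomial $m$), and a polynomial annihilated by a monomial differential operator need not vanish. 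So the sentence ``so $EF$ is again a solution'' is not justified for the system as actually defined, and no argument is offered for the divided operators.

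The paper's proof is organized precisely to avoid this issue: it never argues generator by generator, but instead proves the identification $Sol_{AGKZ}=Sol_{\bar{I}_{\mathfrak{g}_2}}$, where $\bar{I}_{\mathfrak{g}_2}$ is obtained from the \emph{full} ideal $I_{\mathfrak{g}_2}$ of relations among the determinants $a_X$ on $G_2$ by the substitution $A_X\mapsto\frac{\partial}{\partial A_X}$ (using the theorem on the generators of $I_{\mathfrak{g}_2}$, the decomposition $Sol_{\bar{I}_{\mathfrak{g}_2}}=Sol_{\bar{I}_{\mathfrak{gl}_8}}\cap Sol_{Jacobi}\cap Sol_{spc\,\mathfrak{g}_2}$, and the $\mathfrak{gl}_8$ identification from Lemma \ref{ll8}). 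Once this equality is in hand, invariance of $Sol_{AGKZ}$ is inherited at once from the invariance of $I_{\mathfrak{g}_2}$, with no need to control the division closure by hand; the same equality is reused later (e.g.\ in the minimality lemma, where $f\in I_{\mathfrak{g}_2}$ forces $f(\frac{\partial}{\partial A})g(A)=0$ for $g\in Sol_{AGKZ}$). To repair your proof you would either have to establish stability of the full set $\mathcal{I}_{AGKZ}$ of Definition \ref{porozd} (including the division step) under commutation with the $\mathfrak{g}_2$-action, or reproduce the identification $Sol_{AGKZ}=Sol_{\bar{I}_{\mathfrak{g}_2}}$ --- which is essentially the paper's route.
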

\proof

Let $I_{\mathfrak{g}_2}\subset \mathbb{C}[A]$ be the ideal of relations between the determinants $a_X$ on the group $G_2$.
Define the ideal $\bar{I}_{\mathfrak{g}_2}$ in the ring of differential operators with constant coefficients $\mathbb{C}[\frac{\partial}{\partial A_X}]$ as the ideal obtained from $I_{\mathfrak{g}_2}$ by the substitution

$$
A_X\mapsto  \frac{\partial }{ \partial  A_X}.
$$

The action of $\mathfrak{g}_2$ on the variables $A_X$ defines the action of $\mathfrak{g}_2$ on $\mathbb{C}[\frac{\partial}{\partial A_X}]$.
At the same time, since the ideal $I_{\mathfrak{g}_2}$ is invariant, then the ideal $\bar{I}_{\mathfrak{g}_2}$ is also invariant, and therefore its space of polynomial solutions, denoted as $Sol_{\bar{I}_{\mathfrak{g}_2}}$, is also invariant.

 We will show that $Sol_{\bar{I}_{\mathfrak{g}_2}}$ coincides with the space of polynomial solutions of the A-GKZ system.
In Lemma \ref{lms}, it was proved that the ideal $I_{\mathfrak{g}_2}$ is generated by the ideal $I_{\mathfrak{gl}_8}$, the Jacobi relations, and the relations specific to $\mathfrak{g}_2$. Therefore,

$$
Sol_{\bar{I}_{\mathfrak{g}_2}}=Sol_{\bar{I}_{\mathfrak{gl}_8}} \cap Sol_{  Jacobi}\cap Sol_{spc\,\, \mathfrak{g}_2},
$$

where $Jacobi$ is a set of second-type equations for the A-GKZ system, and $spc\,\, \mathfrak{g}_2$ are the remaining equations of the A-GKZ system. For the case of series $A$, it was shown in the proof of Lemma \ref{ll8} that $Sol_{\bar{I}_{\mathfrak{gl}_8} }$ coincides with the space of polynomial solutions of the A-GKZ system for series $A$. But the A-GKZ system for $\mathfrak{g}_2$ is generated by the union of the A-GKZ system for the $A$ series, the Jacobi equations, and the equations that are constructed using the $G_2$-specific relations. Therefore, $Sol_{\bar{I}_{\mathfrak{g}_2}}$ indeed coincides with the space of polynomial solutions of the A-GKZ system for $\mathfrak{g}_2$. In turn, this means that the space of polynomial solutions of the A-GKZ system for $\mathfrak{g}_2$ is invariant under $\mathfrak{g}_2$.

\subsection{Canonical embedding of finite-dimensional irreducible representations}

Let $\alpha \omega_1+\beta \omega_{2}$, where $\omega_1$, $\omega_2$ are fundamental weights, be the highest weight of the representation $\mathfrak{g}_2$. In section \ref{fr}, it was discussed that in the functional realization, $a_{-4}^{\alpha}a_{-4,-3}^{\beta}$ will be the highest vector of this highest weight. This consideration will allow us to explicitly present the polynomials in the variables $A_X$ that are the highest vectors of a given highest weight and lie in the space of solutions to the A-GKZ system, which will provide us with a canonical embedding of the corresponding finite-dimensional irreducible representation into the space of polynomial solutions to the A-GKZ system.



\begin{lemma}
	\label{ostnach}
	In the solution space of the A-GKZ system, there is a polynomial
 \begin{equation}
 \label{stv1}
 (A_{-4}+A_{-4,...,3})^{\alpha}(A_{-4,-3}+A_{-4,...,2})^{\beta}
 \end{equation}
 which is the highest weight vector $\alpha\omega_1+\beta\omega_{2}$.
\end{lemma}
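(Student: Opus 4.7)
The plan is to verify in sequence that the polynomial
$F:=(A_{-4}+A_{-4,\ldots,3})^\alpha(A_{-4,-3}+A_{-4,\ldots,2})^\beta$
lies in the solution space of the A-GKZ system \eqref{agkzg2} and is a highest weight vector of weight $\alpha\omega_1+\beta\omega_2$. Denote the four index sets in the support of $F$ by $X_1=\{-4\}$, $X_2=\{-4,-3\}$, $X_3=\{-4,\ldots,2\}$, $X_4=\{-4,\ldots,3\}$, and observe that $X_3=\widehat{-X_2}$, $X_4=\widehat{-X_1}$; the sizes of these four sets are $1,2,6,7$.

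For the membership of $F$ in $Sol_{AGKZ}$ I would check the four types of equations in \eqref{agkzg2} one by one. Every Pl\"ucker-type equation is a combination of three products $\partial_Z\partial_{Z'}$ with $|Z'|=|Z|+1$, so a nonzero contribution requires two variables of adjacent sizes from the support; the only candidates are the pairs $(X_1,X_2)$ and $(X_3,X_4)$, and in each case the distinctness of $i,j,y$ together with the fact that the larger set is obtained from the smaller by adjoining a single index forces one of the required new indices to coincide with $i$, killing every term. The Jacobi equations are satisfied because $F$ depends on $A_{X_i}$ and $A_{\widehat{-X_i}}$ only through their sum, and a direct computation of the sign \eqref{znk} for $k=1,2$ yields $+1$. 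For the equation $\partial_{\mathbf{1}}F=0$, unfold it as $\sum_ic_i\partial_iF=0$ with $\mathbf{1}=\sum_ic_ie_i$: since $\mathbf{1}$ spans the one-dimensional $\mathfrak{g}_2$-invariant summand of the standard $8$-dimensional $\mathfrak{o}_8$-representation, which lies in the zero-weight subspace $\langle e_{-1},e_1\rangle$ of the Cartan, the coefficients $c_i$ vanish for $i\neq\pm1$, and $F$ does not involve $A_{-1}$ or $A_1$.

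The $G_2$-specific equations $\sum_{|X|=k}\omega_X\partial_{-X}F=0$ are treated next. For $k=3,4,5$ no set of size $k$ has its negation in the support of $F$, so the sum is trivially zero. For $k=6$ one decomposes $\Lambda^6\mathbb{O}=\Lambda^6V_7\oplus(\Lambda^5V_7\otimes\mathbf{1})$ with $V_7$ the standard seven-dimensional $\mathfrak{g}_2$-representation; using self-duality of $V_7$ and the decomposition $\Lambda^2V_7=V_7\oplus\mathfrak{g}_2$, neither $\Lambda^5V_7$ nor $\Lambda^6V_7$ admits a $G_2$-invariant, whence $\omega\equiv0$ on $\Lambda^6\mathbb{O}$. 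For $k=7$ the only surviving term is $\omega_{-X_4}\partial_{X_4}F$; the $G_2$-invariant in $\Lambda^7\mathbb{O}$ is one-dimensional and proportional to $\iota_{\mathbf{1}}\mathrm{vol}_{\mathbb{O}}$, so $\omega_{-X_4}$ is proportional to the $e_{-4}$-component of $\mathbf{1}$, which vanishes by the same argument used for the unit equation.

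For the highest weight property I would show that every $F_{i,j}=E_{i,j}-E_{-j,-i}$ with $i<j$ kills each $A_{X_k}$. By \eqref{edet2}, $E_{i,j}A_{X_k}\neq0$ demands $j\in X_k$ and $i\in\widehat{X_k}$; but the four sets $X_1,\ldots,X_4$ share the feature that every element of $\widehat{X_k}$ is strictly larger than every element of $X_k$, forcing $i>j$, contradicting $i<j$. The same argument handles $E_{-j,-i}A_{X_k}$, and the positive-root generators of $\mathfrak{g}_2$ are linear combinations of such $F_{i,j}$. For the weight, note that $F_{j,j}$ acts on $A_X$ by the scalar $\mathbf{1}_{j\in X}-\mathbf{1}_{-j\in X}$, which is invariant under $X\mapsto\widehat{-X}$; hence $A_{X_1}+A_{\widehat{-X_1}}$ and $A_{X_2}+A_{\widehat{-X_2}}$ carry the weights of $a_{-4}$ and $a_{-4,-3}$ respectively, which by Section \ref{fr} are $\omega_1$ and $\omega_2$, so $F$ has weight $\alpha\omega_1+\beta\omega_2$. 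The main obstacle I expect is the $k=7$ step: identifying explicitly the proportionality between the octonionic formula for $\omega|_{\Lambda^7\mathbb{O}}$ and $\iota_{\mathbf{1}}\mathrm{vol}_{\mathbb{O}}$ so as to extract the factor $c_{-4}$ requires some careful invariant-theoretic bookkeeping, with the sign computation in \eqref{znk} being an ancillary nuisance.
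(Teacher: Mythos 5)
Your proposal is correct and takes essentially the same route as the paper: the paper's proof consists of noting that the sign \eqref{znk} equals $+1$ for $X=\{-4\}$ and $X=\{-4,-3\}$ and then asserting that the membership in the solution space of the A-GKZ system and the highest-weight property of weight $\alpha\omega_1+\beta\omega_2$ are verified by a direct check. Your argument is precisely that direct check carried out in detail (support analysis for the Pl\"ucker-type and Jacobi equations, the location of $\mathbf{1}$ in the zero-weight space for the unit equation, the invariant-theoretic vanishing for $k=3,\dots,7$, and the raising-operator and weight computations), so it matches the paper's approach rather than replacing it.
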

\proof

When $X=\{-4\}$ or $X=\{-4,-3\}$, the sign \eqref{znk} is equal to $+1$. Then the desired statement is proved directly by checking that this function is a solution of the A-GKZ for $\mathfrak{g}_2$ and that it is a highest vector with the desired highest weight.

\endproof

\subsection{The minimality of the solution space.}

\begin{lemma} The solution space of the A-GKZ system and the representation model constructed in the proof of Lemma \ref{ostnach} are the same.
\end{lemma}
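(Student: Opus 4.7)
The plan is to prove equality by establishing the two inclusions; one is immediate and the other is a matter of matching multiplicities. For the easy direction, Lemma \ref{prdl} tells us that $Sol_{AGKZ}^{\mathfrak{g}_2}$ is $\mathfrak{g}_2$-invariant and Lemma \ref{ostnach} produces, for every $(\alpha,\beta)\in\mathbb{Z}_{\geq 0}^2$, a highest weight vector of weight $\alpha\omega_1+\beta\omega_2$ inside it; the irreducible submodule generated by this vector therefore lies in $Sol_{AGKZ}^{\mathfrak{g}_2}$, and so does the direct sum of all such submodules, which is precisely the representation model.

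For the reverse inclusion I would exploit the identification $Sol_{AGKZ}^{\mathfrak{g}_2}=Sol_{\bar I_{\mathfrak{g}_2}}$ established in the proof of Lemma \ref{prdl}. The Fischer pairing $\langle f,g\rangle=\bigl(f(\partial/\partial A)\,g\bigr)(0)$ on $\mathbb{C}[A]$ is $\mathfrak{g}_2$-equivariant and nondegenerate on each graded piece; it pairs $I_{\mathfrak{g}_2}$ to zero with $Sol_{\bar I_{\mathfrak{g}_2}}$, yielding a graded $\mathfrak{g}_2$-equivariant decomposition $\mathbb{C}[A]=I_{\mathfrak{g}_2}\oplus Sol_{\bar I_{\mathfrak{g}_2}}$ and hence an isomorphism
\[
Sol_{AGKZ}^{\mathfrak{g}_2}\;\cong\;\mathbb{C}[A]/I_{\mathfrak{g}_2}\;=\;\mathbb{C}[a_X].
\]
By the Theorem of Section 2, the right-hand side is the multihomogeneous coordinate ring of the affine cone $CFl_{G_2}$ over the $G_2$-flag variety.

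It now suffices to identify $\mathbb{C}[a_X]$ with $\bigoplus_{\alpha,\beta\geq 0}V_{\alpha\omega_1+\beta\omega_2}$, each irreducible appearing with multiplicity one. This is the Borel-Weil description of the flag variety $G_2/B_{G_2}$: the $|X|=1$ and $|X|=2$ determinants span the fundamental representations $V_{\omega_1}$ and $V_{\omega_2}$ respectively, so $CFl_{G_2}$ is the multicone over the projective embedding of $G_2/B_{G_2}$ by the two fundamental line bundles, and its $(\alpha,\beta)$-bigraded piece equals $H^0(G_2/B_{G_2},\mathcal{L}_{\alpha\omega_1+\beta\omega_2})\cong V_{\alpha\omega_1+\beta\omega_2}^{*}\cong V_{\alpha\omega_1+\beta\omega_2}$, the last isomorphism using that $-1$ lies in the Weyl group of $G_2$, so every finite-dimensional irreducible is self-dual. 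Each irreducible therefore appears with multiplicity exactly one on the right-hand side, and since the representation model already embeds into $Sol_{AGKZ}^{\mathfrak{g}_2}$ with the same multiplicities, the two spaces must coincide.

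The main obstacle is the multiplicity-one identification in the exceptional case, since for $G_2$ one cannot quote the Plücker-style description available for the classical series and must instead invoke Borel-Weil together with the $G_2$-specific relations \eqref{ds}. A purely combinatorial alternative, avoiding Borel-Weil, is to use the basis $F^{\mathfrak{g}_2}_{\gamma}$: these solutions are in bijection with the Gelfand-Tsetlin diagrams of Definition \ref{dgc}, and it suffices to verify that the number of such diagrams with fixed ``top'' $\alpha\omega_1+\beta\omega_2$ equals $\dim V_{\alpha\omega_1+\beta\omega_2}$. This is a concrete check on the Gelfand-Tsetlin lattice $B_{GC}^{\mathfrak{g}_2}$ in the spirit of the classical-series arguments of \cite{a1}, \cite{a2}, with the extra generators $u_\beta$ and $d_\gamma$ accounting for the passage from $\mathfrak{gl}_8$ to $\mathfrak{g}_2$.
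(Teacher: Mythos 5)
Your overall skeleton is sound and, at bottom, it runs on the same mechanism as the paper's proof: the easy inclusion of the model into $Sol_{AGKZ}$ via Lemmas \ref{prdl} and \ref{ostnach}, and then a comparison of $Sol_{AGKZ}$ with the ring generated by the determinants $a_X$ by means of the pairing \eqref{skp}, the decisive input being that this ring is a multiplicity-free sum of all irreducibles. The paper phrases this as a proof by contradiction: if the inclusion were strict, a count of homogeneous components against the Zhelobenko model would force the substitution map $A_X\mapsto a_X$ to have a nonzero kernel element $f\in Sol_{AGKZ}\cap I_{\mathfrak{g}_2}$, and then $f(\frac{\partial}{\partial A})f(A)=0$ kills $f$. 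You instead assert a direct decomposition $\mathbb{C}[A]=I_{\mathfrak{g}_2}\oplus Sol_{\bar I_{\mathfrak{g}_2}}$ and the isomorphism $Sol_{AGKZ}\cong \mathbb{C}[A]/I_{\mathfrak{g}_2}=\mathbb{C}[a_X]$; this is fine in substance, and it carries the same small caveat as the paper's own step (over $\mathbb{C}$ the bilinear form \eqref{skp} has isotropic vectors, so one should use the Hermitian version or the reality of the relations to conclude $I_{\mathfrak{g}_2}\cap Sol=0$).

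The genuinely imprecise point is the multiplicity-one input, which you yourself single out as the main obstacle. The ideal $I_{\mathfrak{g}_2}$ is not homogeneous: the relations \eqref{ds} include the inhomogeneous equations $a_{\bf 1}=1$ and $\sum_X\omega_X a_{-X}=\omega_{-4,...,-4+k-1}$, and the ring $\mathbb{C}[a_X]$ also contains the minors with $|X|=3,4,5$. Hence $CFl_{G_2}$ is not literally the bigraded multicone over $G_2/B_{G_2}$ in the two fundamental embeddings, there is no $(\alpha,\beta)$-bigrading of its coordinate ring of the kind you use, and the identification of the $(\alpha,\beta)$-piece with $H^0(G_2/B_{G_2},\mathcal{L}_{\alpha\omega_1+\beta\omega_2})$ does not literally hold; moreover the span of the $|X|=2$ minors restricted to $G_2$ is $\Lambda^2\mathbb{C}^8$ as a $G_2$-module, which contains copies of $V_{\omega_1}$ and a trivial summand besides $V_{\omega_2}$. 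What you actually need is only that $\mathbb{C}[a_X]$ is a multiplicity-free sum of all $V_{\alpha\omega_1+\beta\omega_2}$, and the clean way to get this is not Borel--Weil on a bicone but the observation that the minors on the first consecutive rows are invariant under left translations by the unipotent radical of $B_{G_2}$, so that $\mathbb{C}[a_X]$ embeds into $\mathbb{C}[U\backslash G_2]\cong\bigoplus_{\lambda}V_{\lambda}$ and contains every highest vector $a_{-4}^{\alpha}a_{-4,-3}^{\beta}$ --- which is exactly the Zhelobenko model the paper invokes. With that substitution your argument closes; your alternative combinatorial route (matching Gelfand-Tsetlin diagrams of Definition \ref{dgc} against dimensions) would also work in principle but is not carried out.
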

For the direct sum in $\mathbb{C}[A]$ of subrepresentations with leading vectors \eqref{stv1}, we introduce the temporary notation $Mod$, and for the solution space of the A-GKZ system, we introduce the notation $Sol_{AGKZ}$.

\begin{proof}
	It has already been proven that $Mod\subset Sol_{AGKZ}$.
 Assume the opposite: $Mod\neq Sol_{AGKZ}$.
 When the independent variables $A_X$ are substituted by the determinants $a_X$, the representation $Sol_{AGKZ}$ is mapped to the Zhelobenko model, and $Mod$ is isomorphically mapped to the Zhelobenko model for reasons of irreducibility \footnote{This follows from the fact that the space of the Zhelobenko model coincides with the space of functions that can be written as polynomials in determinants that satisfy conditions of the type of homogeneity, see \cite{zh}}. From this, we can conclude (for example, by considering finite-dimensional subspaces in $Sol_{AGKZ}$ with a fixed degree of homogeneity in $A_X$ with $|X|=1$ or $7$ and in $A_X$ with $|X|=2$ or $6$), that the kernel of this mapping is non-trivial under the assumption in question. This means that there exists a non-zero function $f\in Sol_{AGKZ}$
	
	$$
	f\mid_{A_X\mapsto a_X}=0,
	$$
	
that is, $f(A_X)\in I_{\mathfrak{g}_2}$. But then $f(\frac{\partial}{\partial A})g(A)=0$ for all $g\in Sol_{AGKZ}$. In particular, $f(\frac{\partial}{\partial A})f(A)=0$, but this is only possible if $f=0$. Contradiction.
\end{proof}

The Theorem \ref{t2} is proved.

\subsection{A-GKZ model}

Let us add to the statement proved in Theorem \ref{t2} the conditions that select an irreducible representation of a fixed highest weight.

\begin{theorem}
The space of polynomial solutions of the A-GKZ system such that the homogeneous degree of $A_X$, $|X|=1$ or $7$, is equal to $\alpha$, the homogeneous degree of $A_X$, $|X|=2$ or $6$, is equal to $\beta$, and there are no $A_X$ with other values of $|X|$, is an irreducible representation of $\mathfrak{g}_2$ of highest weight $\alpha\omega_1+\beta\omega_2$.
	
At the same time, the basis of the representation consists of the basis solutions $F_{\gamma}(A)$, where $\gamma$ are the various Gelfand-Tsetlin diagrams in the sense of definition \ref{dgc}, such that
 
 \begin{align*}
 &\sum_{X:|X|=1,7}\gamma_X=\alpha,\,\,\,\sum_{X:|X|=2,6}\gamma_X=\beta,
 \end{align*}
 and the coordinates $\gamma_X$ with other values of $|X|$ are zero.
\end{theorem}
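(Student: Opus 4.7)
The plan is to establish the two assertions of the theorem in succession: first that the degree-truncated subspace $W\subset Sol_{AGKZ}$ described in the statement is precisely the irreducible $V(\alpha\omega_1+\beta\omega_2)$, using Theorem \ref{t2} and Lemma \ref{ostnach}; and second that, among the basis solutions $F_\gamma$ of $Sol_{AGKZ}$, exactly those whose representative $\gamma$ satisfies the stated degree constraints lie in $W$ and span it.

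The key observation is that the action \eqref{edet2} of each matrix unit $E_{i,j}$ on a single variable $A_X$ produces either $0$ or another variable $A_{X'}$ with $|X'|=|X|$. Consequently, for every polynomial $f$ the total degrees $d_k(f):=\sum_{|X|=k}\deg_{A_X}(f)$, and therefore the coarse multi-degree $(d_1+d_7,\,d_2+d_6,\,d_3+d_5,\,d_4)$, are preserved by the $\mathfrak{g}_2$-action. The subspace $W$ is exactly the intersection of $Sol_{AGKZ}$ with the graded component $(\alpha,\beta,0,0)$, so it is $\mathfrak{g}_2$-invariant; and by Lemma \ref{ostnach} it already contains the highest weight vector $(A_{-4}+A_{-4,\ldots,3})^{\alpha}(A_{-4,-3}+A_{-4,\ldots,2})^{\beta}$ of weight $\alpha\omega_1+\beta\omega_2$, giving $V(\alpha\omega_1+\beta\omega_2)\subseteq W$.

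For the reverse inclusion, Theorem \ref{t2} decomposes $Sol_{AGKZ}=\bigoplus_{\alpha',\beta'\geq 0}V(\alpha'\omega_1+\beta'\omega_2)$. For each summand, the canonical highest vector provided by Lemma \ref{ostnach} lies in the graded component $(\alpha',\beta',0,0)$, and by invariance of the grading the whole summand lies there as well. Distinct graded components of $Sol_{AGKZ}$ meet trivially, so only $V(\alpha\omega_1+\beta\omega_2)$ intersects $W$ nontrivially, whence $W=V(\alpha\omega_1+\beta\omega_2)$.

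For the basis statement, the $F_\gamma(A)$ indexed by Gelfand-Tsetlin diagrams form a basis of $Sol_{AGKZ}$ by the last lemma of Section \ref{agkz}, and because this basis is respected by the irreducible decomposition, each $F_\gamma$ belongs to a single summand. Choosing a representative $\gamma$ with $\gamma_X=0$ for $|X|\in\{3,4,5\}$ and totals $\alpha$, $\beta$ on the index pairs $\{1,7\}$ and $\{2,6\}$ places the leading monomial $A^\gamma$ in the graded component $(\alpha,\beta,0,0)$; a direct inspection of the formulas \eqref{jf}--\eqref{ff}, using that the lattice generators \eqref{va} and \eqref{ub} preserve the coarse multi-degree, shows that every monomial of $F_\gamma$ stays in the same component, so $F_\gamma\in W$. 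The main technical obstacle will be to handle the lattice generators $d_\gamma$ of the form \eqref{wg}, which are not individually degree-preserving: one must argue that the equations $\partial F/\partial A_{\mathbf 1}=0$ and $\sum_X\omega_X\partial F/\partial A_{-X}=0$ from \eqref{agkzg2} enforce exactly the cancellations needed to keep $F_\gamma$ inside the chosen graded component, and dually that any Gelfand-Tsetlin diagram whose class admits no representative of the prescribed form contributes its basis vector to a different irreducible summand.
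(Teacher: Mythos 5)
The first half of your argument is sound and is essentially the reasoning the paper leaves implicit (the theorem is stated as a supplement to Theorem \ref{t2} with no separate proof): the action \eqref{edet2} preserves the cardinality $|X|$ of the index set of each variable, hence the coarse multidegree; the subspace $W$ cut out by the degree conditions is therefore $\mathfrak{g}_2$-invariant; it contains the highest vector \eqref{stv1} of Lemma \ref{ostnach}; and since every summand of the decomposition of Theorem \ref{t2} is generated by such a highest vector and so sits inside a single graded component $(\alpha',\beta',0,0)$, uniqueness of the graded decomposition in $\mathbb{C}[A]$ forces $W=V(\alpha\omega_1+\beta\omega_2)$.

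The second half, however, contains a genuine gap which you yourself flag but do not close. The assertion ``because this basis is respected by the irreducible decomposition, each $F_\gamma$ belongs to a single summand'' is exactly the nontrivial point, not a given: it is equivalent to $F^{\mathfrak{g}_2}_\gamma(A)$ being homogeneous for the coarse grading, and this is not automatic, because the support of the series \eqref{jf}--\eqref{ff} consists of exponents $\gamma+t^1v+t^2u+t^3d-sr$, and while the shifts along \eqref{va}, \eqref{ra} and \eqref{ub} preserve the coarse degree, the shifts along the generators $d_\gamma$ of \eqref{wg} (built from $e_{\bf 1}$ and the invariants $\omega$) do not. To prove membership $F_\gamma\in W$ you must show that for a representative $\gamma$ supported on $|X|\in\{1,2,6,7\}$ every term with $t^3\neq 0$ either vanishes (some exponent becomes negative, which requires using the sign pattern of the tensors $\omega_X$ and of the coordinates of ${\bf 1}$) or happens to land in the same graded component; no such argument is given. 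Moreover, even granting membership, the spanning statement needs its own justification: since $F_\gamma$ depends on the chosen representative of the class $\gamma \bmod B^{\mathfrak{g}_2}_{GC}$ and the lattice contains the non--degree-homogeneous generators $d_\gamma$, it is not immediate that every diagram whose basis solution meets $W$ admits a representative of the prescribed form (equivalently, that the number of constrained diagrams equals $\dim V(\alpha\omega_1+\beta\omega_2)$). Your closing sentence names both issues as ``the main technical obstacle'' but leaves them unresolved, so as written the proof establishes only the first assertion of the theorem, not the statement about the basis $F_\gamma(A)$.
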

Due to the independence of the variables $A_X$, we can introduce an invariant scalar product on the polynomials $f(A)$, $g(A)$ using the formula

\begin{equation}
\label{skp}
<f(A),g(A)>:=f(\frac{\partial }{ \partial A}) g(A)\mid_{A=0},
\end{equation}

where $f(\frac{\partial }{\partial A}) $ is the result of substituting the differential operators $\frac{\partial }{\partial A_X}$ for the variables $A_X$ in $f$.

\section{Gelfand-Tsetlin-type basis for $\mathfrak{g}_2$}
\label{solgc}

In this section, we construct a Gelfand-Tsetlin-type basis for the algebra $\mathfrak{g}_2$. The construction of such a basis begins with the selection of a decreasing chain of subalgebras.

 Next one usually  investigates the branching of an irreducible representation when the  algebra is restricted according to this chain. To construct the basis, it is important to understand which irreducible representations and with what multiplicity the irreducible representation of the initial algebra decomposes under the given restriction. 
 There are completely different approaches to this problem. It is solved by adding some intermediate terms to the chain of subalgebras \cite{sht}, by constructing an action on the multiplicity space of the Yangian \cite{m}, and so on.

In this paper, we use a completely different approach, developed in \cite{a1}, \cite{a2} for classical series. The A-GKZ model is used, that is, the space of polynomial solutions of the A-GKZ system is taken as the representation space. According to the chain of subalgebras, the order on the coordinates is built in some way. Using this order, we obtain an interpretation of the A-GKZ system as a deformation of some GKZ system. Using this connection, we construct a basis in the solution space of the A-GKZ system\footnote{This has already been done in section \ref{agkz}}. It is shown that the orthogonalization of this basis is a Gelfand-Tsetlin-type basis. We arrive at this conclusion by proving that these vectors are eigenvectors of the Gelfand-Tsetlin subalgebra.

Unfortunately, this scheme is not expressed in the form of Theorems that work for all series at once. For each individual series, it is implemented "by hand". Nevertheless, it works for the series $A$, $B$, $C$, $D$, and for $G_2$.

 So, let's implement this scheme for $\mathfrak{g}_2$.

As mentioned, the construction of a Gelfand-Tsetlin-type basis for $\mathfrak{g}_2$ is based on the choice of a decreasing chain of subalgebras in $\mathfrak{g}_2$. Different chains of subalgebras are used in different works (see the discussion in \cite{sst}). We will use the simplest chain $\mathfrak{g}_2\supset \mathfrak{sl}_3$, where $\mathfrak{sl}_3$ is generated by the root elements of $\mathfrak{g}_2$ corresponding to long roots. 
A Gelfand-Tsetlin type basis can be defined as a self-basis for the Gelfand-Tsetlin subalgebra $GT\subset U(\mathfrak{g}_2)$, generated by the centers $Z(U(\mathfrak{g}_2))$, $Z(U(\mathfrak{sl}_3))\subset U(\mathfrak{g}_2)$. Let us describe a certain system of generators of these algebras.

\subsection{Subalgebra $\mathfrak{sl}_3$. Center of the universal enveloping algebras $\mathfrak{g}_2$ and $\mathfrak{sl}_3$}
\label{sug}

The algebra $\mathfrak{g}_2$ is realized as a subalgebra of $\mathfrak{o}_8$ that is invariant under the diagram automorphism of order $3$. If we denote this automorphism as $\psi$, then there is a natural linear map

$$
h: \mathfrak{o}_8\rightarrow \mathfrak{g}_2:\,\,\,f\mapsto \frac{1}{3}(f+\psi(f)+\psi^2(f)).
$$

Put

$$
D_{i,j}:=h(F_{i,j}),\,\,i,j=-4,...,4.
$$

The elements $D_{i,j}$ are root elements of $\mathfrak{g}_2$.
Take the subalgebra $\mathfrak{sl}_3$ generated by the elements corresponding to the positive simple roots $\varepsilon_{1}-\varepsilon_{2}$, $\varepsilon_{2}-\varepsilon_{3}$ (in the usual realization of the root system $A_2$), that is, by the elements \begin{equation}\label{sl3g2}D_{-3,-2}=F_{-3,-2},D_{-4,2}=F_{-4,2}\in \mathfrak{g}_2\subset \mathfrak{o}_8.\end{equation}

The center $Z(U(\mathfrak{g}_2))$ is generated (see the discussion of various generators of the center in \cite{ag1}) by Casimir elements of orders $2$ and $6$, which are written as follows:

\begin{align*}
& C^{\mathfrak{g}_2}_2=\sum_{i_1,i_2=-4}^4 D_{i_1,i_2}D_{i_2,i_1},\\
& C^{\mathfrak{g}_2}_6=\sum_{i_1,...,i_6=-4}^4 D_{i_1,i_2}D_{i_2,i_3}D_{i_3,i_4}D_{i_4,i_5}D_{i_5,i_6}D_{i_6,i_1}
\end{align*}

The center $Z(U(\mathfrak{sl}_3))$ is generated by Casimir elements of orders $2$ and $3$. If this algebra is realized in the standard way as $<E_{i,j}>_{i,j,=1,2,3}$, then they are written as follows:

\begin{align*}
& C^{\mathfrak{sl}_3}_2=\sum_{i_1,i_2=1}^3 E_{i_1,i_2}E_{i_2,i_1},\\
& C^{\mathfrak{sl}_3}_3=\sum_{i_1,i_2,i_3=1}^3 E_{i_1,i_2}E_{i_2,i_3}E_{i_3,i_1}.
\end{align*}

Accordingly, in our case, we must replace $E_{1,2}$ by $D_{-3,-2}$ and so on according to \eqref{sl3g2}.
Since $F_{i,j}^*=F_{j,i}$, it is clear from these explicit expressions that these operators are self-adjoint.

\subsection{The basis $F_{\gamma}(A)$ and the restriction procedure $\mathfrak{gl}_8\downarrow \mathfrak{gl}_3\oplus\mathfrak{gl}_3$}\label{mxm}
Consider the chain of subalgebras \eqref{cpk}. Let $g$ be one of the subalgebras in this chain.

Introduce the operation
$$
\gamma\mapsto \gamma_{max}^{g}
$$
of $g$-maximization on vectors. It can be defined in two equivalent ways..


{\bf 1.} Take the monomial $A^{\gamma}=\prod_{X}A_X^{\gamma_X}$. Introduce the variable substitution $A_X\mapsto A_{X_{max}}$ as follows. Associate with $X=\{i_1,...,i_t\}$ the tensor $e_{i_1}\wedge...\wedge e_{i_t}$. Let's act on this tensor with the raising operators from $g$, so that the result will be, up to a constant, a $g$-highest vector of the form 
\footnote{ For an arbitrary choice of the subalgebra $g\subset \mathfrak{gl}_{8}$, the resulting $g$-highest vector is not necessarily a decomposable tensor, but for the subalgebras in the chain \eqref{cpk}, this fact holds. }  $e_{j_1}\wedge...\wedge e_{j_t}$. Then $X_{max}=\{j_1,...,j_t\}$. Let

$$
A^{\gamma_{max}^g}:=\prod_{X} A_{X_{max}}^{\gamma_X}.
$$

{\bf 2.} There is an action of the algebra $\mathfrak{gl}_n$ on the space $\mathbb{C}^N$ with coordinates that are numbered subsets of $X$. This action is defined by identifying $\mathbb{C}^N$ with the exterior algebra of the standard representation of $\mathfrak{gl}_n$ (In what follows, we will use the term "action of $g$ on $\gamma$" for $\gamma\in\mathbb{C}^N$). Then, for $\gamma\in\mathbb{C}^N$, define $\gamma_{max}^g$ as the nonzero $g$-maximal vector obtained by applying the raising operators from $g$ to $\gamma$.

The maximization procedure, as can be seen from the first definition, is a linear operation, in particular

$$
(\gamma+\delta)^g_{max}=\gamma^g_{max}+\delta^g_{max}.
$$

The following statement holds.

\begin{prop}
	\label{mxmf}
	The function  $\mathcal{F}_{\gamma_{max}^g}(a)$  is a $g$-highest vector in the functional realization
\end{prop}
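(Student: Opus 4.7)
The plan is to invoke the explicit matrix-unit action on the $\Gamma$-series. A direct Leibniz computation starting from \eqref{edet1} yields, after the change of variables $z = x - e_{j,X}$ (which absorbs the factor $x_{j,X}/x! = 1/z!$), the compact identity
\[
E_{i,j}\,\mathcal{F}_\gamma(a) \;=\; \sum_{X\,:\,i,j\notin X} a_{i,X}\,\mathcal{F}_{\gamma - e_{j,X}}(a),
\]
which is \eqref{efd} in simplified form (the auxiliary $sr$-shifts in \eqref{efd} appear once $a_{i,X}\mathcal{F}_{\gamma-e_{j,X}}(a)$ is re-expanded in the basis $\{\mathcal{F}_\delta(a)\}$). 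For $\gamma=\gamma_{max}^g$ and $E_{i,j}$ a raising operator in $g$, it therefore suffices to show that each summand $\mathcal{F}_{\gamma_{max}^g - e_{j,X}}(a)$ vanishes.

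The combinatorial heart of the argument is the characterization of $g$-maximality: a subset $Y$ is $g$-maximal iff for every raising $E_{i',j'}\in g$ with $j'\in Y$ one has $i'\in Y$. Applied to $Y = \{j\}\cup X$ and to our raising $E_{i,j}$ (with $i,j\notin X$), $g$-maximality of $Y$ would force $i\in X$, contradicting the summation constraint. Hence $\{j\}\cup X$ is not $g$-maximal, $(\gamma_{max}^g)_{j,X}=0$, and the shifted vector $\gamma_{max}^g - e_{j,X}$ carries a $-1$ at the non-$g$-maximal position $\{j\}\cup X$ while remaining zero at every other non-$g$-maximal position. By definition \ref{dgc}, the $\Gamma$-series $\mathcal{F}_{\gamma_{max}^g - e_{j,X}}(a)$ is zero provided the coset $\gamma_{max}^g - e_{j,X} + B_{GC}$ contains no representative in $\mathbb{Z}^N_{\geq 0}$.

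The main obstacle is proving this lattice-nonexistence. The natural route is to construct a linear functional $\tilde\phi\colon\mathbb{Z}^N\to\mathbb{Z}$ that (i) is nonnegative on the standard basis $\{e_Y\}$, strictly positive precisely when $Y$ is not $g$-maximal, and (ii) vanishes identically on $B_{GC}$. Granting such a $\tilde\phi$, nonnegativity of any $\gamma_{max}^g - e_{j,X} + b$ with $b\in B_{GC}$ would force $\tilde\phi(\gamma_{max}^g - e_{j,X} + b)\geq 0$, whereas properties (i) and (ii) force it to equal $-\tilde\phi(e_{j,X})<0$, a contradiction. The construction of $\tilde\phi$ is delicate: for $g=\mathfrak{gl}_8$ the candidate $\tilde\phi(e_Y)=\sum_{i\in Y}i - c_{|Y|}$ (with $c_k$ the sum of the $k$ smallest indices) works by a weight-preservation calculation on each generator $v_\alpha$; for decomposable subalgebras such as $\mathfrak{gl}_3\oplus\mathfrak{gl}_3$ the functional must be refined to record the $g$-orbit data (the distribution of indices between the two $\mathfrak{gl}_3$-blocks and the inert indices $\pm 1$), and one must verify using the specific ordering \eqref{chpor} that the four terms of each $v_\alpha$ balance correctly under the refined orbit assignment. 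The $\mathfrak{g}_2$-specific generators $u_\beta=e_X-e_{\widehat{-X}}$ and $d_\gamma$ add further complications, handled by invoking the Jacobi identifications $a_X=\pm a_{\widehat{-X}}$ of Lemma \ref{lj} and the $\omega$-invariance \eqref{sxa} to descend to a quotient lattice on which the Plücker argument applies; the signs $\pm_\kappa$ built into the modified equations \eqref{gkzg2} ensure this descent is consistent in the functional realization.
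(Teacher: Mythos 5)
Your reduction via the Leibniz identity $E_{i,j}\mathcal{F}_{\gamma}(a)=\sum_{X:\,i,j\notin X}\pm a_{i,X}\,\mathcal{F}_{\gamma-e_{j,X}}(a)$ is fine, and killing each summand would indeed suffice. But observe that the family of vanishing statements you need, namely $\mathcal{F}_{\gamma^g_{max}-e_{j,X}}=0$ for every raising operator $E_{i,j}\in g$ and every admissible $X$, is \emph{equivalent} to the assertion the paper proves directly: every nonnegative exponent vector in $\gamma^g_{max}+B_{GC}^{\mathfrak{gl}_8}$ is supported on $g$-maximal subsets (if some nonnegative $x$ in the coset had $x_{Y_0}>0$ at a non-maximal $Y_0=\{j\}\cup X$, then $x-e_{Y_0}$ would witness non-vanishing, and conversely). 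So the derivative formula does not bypass anything; the combinatorial heart of the proposition is exactly the step you label ``delicate'' and leave unverified. Two smaller mismatches: Proposition \ref{mxmf} lives in the $\mathfrak{gl}_8$ setting of Section \ref{mxm}, so the relevant lattice is $B_{GC}^{\mathfrak{gl}_8}$ and the relevant nonvanishing criterion is Lemma \ref{lsolgkz}, not Definition \ref{dgc}; the generators $u_\beta$, $d_\gamma$ and the signs $\pm_\kappa$ belong only to the later $\mathfrak{g}_2$ analogue and play no role here.

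The concrete failure is in your step (i)--(ii): for the case actually needed, $g=\mathfrak{gl}_3\oplus\mathfrak{gl}_3$ with blocks $\{-4,2,3\}$ and $\{-3,-2,4\}$, no linear functional $\tilde\phi$ with those properties exists. Take the generator \eqref{vagl} with $i=-4$, $j=-3$, $y=-2$, $X=\emptyset$ (the ordering \eqref{chpor} permits this triple): $v=e_{-4}-e_{-3}-e_{-4,-2}+e_{-3,-2}$. Of the four subsets in its support, $\{-4\}$, $\{-3\}$, $\{-3,-2\}$ are $g$-maximal, while $\{-4,-2\}$ is not (the raising operator $E_{-3,-2}$, which is even part of the positive simple root vector in \eqref{sl3g2}, sends it to $\{-4,-3\}$). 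Hence any $\tilde\phi$ vanishing on the lattice and vanishing at all $g$-maximal coordinates must satisfy $\tilde\phi(e_{-4,-2})=-\tilde\phi(v)=0$, contradicting the strict positivity you require at non-maximal positions; so your contradiction argument never gets started. Worse, this same generator shows that the vanishing claim itself fails if one takes the generator set \eqref{vagl} literally: for $\gamma^g_{max}=e_{-4}+e_{-3,-2}$ one has $\gamma^g_{max}-e_{-4,-2}-v=e_{-3}\geq 0$, so $\mathcal{F}_{\gamma^g_{max}-e_{-4,-2}}\neq 0$. In other words, a correct completion must engage with the precise interplay between the ordering \eqref{chpor}, the admissible generators, and $g$-maximality --- which is exactly what the paper's own proof does by arguing term by term that the surviving monomials of the $\Gamma$-series are supported on $g$-maximal subsets --- and cannot be delegated to a general functional of the kind you postulate.
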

\proof

One has 
\begin{equation}
\label{fla}
\mathcal{F}_{\gamma_{max}^g}(a)=\sum_{t\in\mathbb{Z}^k}\frac{a^{\gamma_{max}^g+tv}}{(\gamma_{max}^g+tv)!}.
\end{equation}

Let's take one of the terms $\frac{a^{\gamma_{max}^g+tv}}{(\gamma_{max}^g+tv)!}$. The vectors \eqref{vagl} have the property that $(v_{\alpha})_{max}^g=0$ or $v_{\alpha}$.. 

In the first case, $v_{\alpha}$ contains vectors $e_{X}$ with index sets $X$ that are not maximal with respect to $g$ (when $X=\{i_1,...,i_k\}\leftrightarrow e_{i_1}\wedge...\wedge e_{i_k}$). From the structure of vector \eqref{vagl}, it is clear that such vectors $e_X$ are included in vector $v_{\alpha}$ both with positive and negative coefficients. But the vector $\gamma_{max}^g$ has no such non-zero components. So, when we add such $v_{\alpha}$ to $\gamma_{max}^g$, we get a vector with negative coordinates. The corresponding term $\frac{a^{\gamma_{max}^g+tv}}{(\gamma_{max}^g+tv)!}$ is zero.

So we can assume that \eqref{fla} contains only terms where the shift is by the vectors $v_{\alpha}$, such that $(v_{\alpha})_{max}^g=v_{\alpha}$.
In this case, the exponent vector $\gamma_{max}^g+tv$ is $g$-maximal. This means that the expression $\frac{a^{\gamma_{max}^g+tv}}{(\gamma_{max}^g+tv)!}$ is a $g$-highest vector.

\endproof

\begin{definition}
 Let $V^g_{\gamma_{max}^g}$ be the irreducible $g$-representation in the functional realization generated by the highest vector $\mathcal{F}_{\gamma_{max}^g}$.
\end{definition}

Next, we will assume that $g=\mathfrak{gl}_3\oplus \mathfrak{gl}_3$. To make the notation less cumbersome, we will omit the upper index in the notation of the maximized vector.
Using formula \eqref{efd}, one can give a definition.

\begin{equation}
	\label{dlin}
V^g_{\gamma_{max}}\subset \bigoplus <\mathcal{F}_{\gamma_{max}-e_{i_1,X_1}-...-e_{i_p,X_p}+
	e_{j_1,X_1}+...+e_{j_p,X_p}+sr}>,
\end{equation}
where the direct sum is taken over all $s\in \mathbb{Z}^k_{\geq 0},$ over all possible $p\in\mathbb{Z}_{>0},$ over all possible sets of pairs of indices $j_t\prec i_t$, $t=1,...,p$, where both indices simultaneously lie in $\{-4,2,3\}$ or $\{-3,-2,4\},$ and over all possible subsets $X_1,...,X_p$.

From \eqref{dlin}, we obtain the following inclusion

\begin{equation}
\label{fvl}
V^g_{\gamma_{max}}\subset \bigoplus_{s\in \mathbb{Z}^k_{\geq 0},\,\,\delta :\,\,\, \delta_{max}=\gamma_{max}} <\mathcal{F}_{\delta+sr}>,
\end{equation}

\begin{prop}
	\label{fvgkz}
	\begin{equation}\label{fvff}\mathcal{F}_{\omega}(a)\in \bigoplus_{s\in\mathbb{Z}^k_{\geq 0}} V^g_{(\omega-sr)_{max}}. \end{equation}
\end{prop}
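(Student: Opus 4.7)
The plan is to prove the inclusion by induction on the $g$-weight of $\mathcal{F}_\omega$, working downward from the $g$-maximal weight. The base case is $\omega=\omega_{max}$: by Proposition~\ref{mxmf}, $\mathcal{F}_{\omega_{max}}$ is a $g$-highest-weight vector, so it generates and lies in the irreducible summand $V^g_{\omega_{max}}=V^g_{(\omega-0\cdot r)_{max}}$, which is one of the summands on the right-hand side.

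For the inductive step, assume $\omega\neq\omega_{max}$. Choose a $g$-raising operator $E_{i,j}\in g$ (with $i\prec j$ in the $g$-order) and a subset $X$ such that $\omega':=\omega+e_{i,X}-e_{j,X}$ has strictly higher $g$-weight than $\omega$ and still corresponds to a valid Gelfand-Tsetlin diagram. The crucial observation is that, since both $i$ and $j$ lie in the same $\mathfrak{gl}_3$-block of $g$, one has $(i,X)_{max}=(j,X)_{max}$, so $(e_{i,X}-e_{j,X})_{max}=0$; by the linearity of the maximization operation, $(\omega'-sr)_{max}=(\omega-sr)_{max}$ for every $s$. By the inductive hypothesis, $\mathcal{F}_{\omega'}\in\bigoplus_s V^g_{(\omega'-sr)_{max}}=\bigoplus_s V^g_{(\omega-sr)_{max}}$. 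Applying the $g$-lowering operator $E_{j,i}\in g$, which preserves the $g$-isotypic decomposition, yields $E_{j,i}\mathcal{F}_{\omega'}\in\bigoplus_s V^g_{(\omega-sr)_{max}}$. Formula \eqref{efd} expands this as
\[
E_{j,i}\mathcal{F}_{\omega'}=c\,\mathcal{F}_\omega+\sum_{(X',s')\neq(X,0)}c_{X',s'}\,\mathcal{F}_{\omega'-e_{i,X'}+e_{j,X'}+s'r}
\]
with $c\neq 0$ for a suitable initial choice of $X$. Solving for $\mathcal{F}_\omega$ reduces the claim to showing that each correction term on the right also lies in $\bigoplus_s V^g_{(\omega-sr)_{max}}$.

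The main obstacle is exactly the control of these correction terms. Since $E_{j,i}\in g$ shifts the $g$-weight of each summand by the same amount, every correction has the same $g$-weight as $\mathcal{F}_\omega$, so a naive induction on $g$-weight alone fails. The remedy is to refine the induction to a well-founded ordering that combines $g$-weight with an auxiliary parameter measuring the total $sr$-displacement; the finite polynomial degree of $\mathcal{F}_\omega$ in the $A_X$-variables bounds the admissible $s'$, so only finitely many corrections appear. One then checks that each correction strictly precedes $\omega$ in this refined ordering, using the identity $(\omega+e_{i,X}-e_{j,X}-e_{i,X'}+e_{j,X'}+s'r-sr)_{max}=(\omega+(s'-s)r)_{max}$ — a consequence of the $g$-blockness of $E_{i,j}$ — to match each correction to a summand $V^g_{(\omega-s''r)_{max}}$ with $s''\geq 0$ (the nontrivial point being that the sign constraint $s''\geq 0$ must be verified by inspection of which pairs $(X',s')$ actually contribute nonzero coefficients $c_{X',s'}$ in \eqref{efd}).
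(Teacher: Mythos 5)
Your inductive scheme does not close, and the place where it fails is precisely the point you leave open. In the step where you solve $c\,\mathcal{F}_\omega=E_{j,i}\mathcal{F}_{\omega'}-\sum c_{X',s'}\mathcal{F}_{\omega''}$, a correction with $\omega''=\omega+(e_{i,X}-e_{j,X})-(e_{i,X'}-e_{j,X'})+s'r$ and $s'>0$ is only known (by your inductive hypothesis, assuming your refined ordering even applies to it) to lie in $\bigoplus_{s\geq 0}V^g_{(\omega''-sr)_{max}}$; since block-internal swaps die under maximization, these summands are $V^g_{\omega_{max}+(s'-s)r_{max}}$, and for $s<s'$ they have highest weight strictly \emph{above} $\omega_{max}$ and are not among the target summands $\bigoplus_{s''\geq 0}V^g_{(\omega-s''r)_{max}}$. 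So the "sign constraint $s''\geq 0$" is not a routine verification to be deferred: it is the actual content of the statement, and neither the refined well-founded ordering, nor the strict decrease of each correction in it, nor the vanishing/cancellation of the components with positive displacement is established. In addition, you assert $c\neq 0$ for a suitable $X$ and the existence, for every non-$g$-maximal diagram $\omega$, of a raising move $\omega\mapsto\omega'=\omega+e_{i,X}-e_{j,X}$ landing on a valid diagram with a nonzero coefficient in \eqref{efd}; the coefficients $c_{X,s}$ are not computed in this paper, so these claims are unsupported.

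Note also that the paper does not argue by induction at all: the proposition is read off directly from the containment \eqref{fvl}, which was derived just before from \eqref{efd} and the observation that shifts $-e_{i,X}+e_{j,X}$ with $i,j$ in the same $\mathfrak{gl}_3$-block do not change the maximization. Since each $V^g_{\gamma_{max}}$ lies in the span of the basis vectors $\mathcal{F}_{\delta+sr}$ with $\delta_{max}=\gamma_{max}$, and the $\Gamma$-series form a basis, the vector $\mathcal{F}_\omega$ can only have components in those $V^g_{\gamma_{max}}$ for which $\omega=\delta+sr$ with $\delta_{max}=\gamma_{max}$, i.e. $\gamma_{max}=(\omega-sr)_{max}$, which is exactly \eqref{fvff}. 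Rebuilding your argument on \eqref{dlin}--\eqref{fvl} would both remove the unproved nonvanishing claims and make the problematic bookkeeping of correction terms unnecessary.
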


\begin{proof}
	
From \eqref{fvl},  one has  that $\mathcal{F}_{\omega}(a)\in V^g_{\gamma_{max}}$ if
	
	$$
	\omega=\delta+sr,\,\,\, \delta_{max}=\gamma_{max}
	$$
	
This relationship immediately leads to the desired statement.
	
\end{proof}

Now we will prove an analogue of Proposition \ref{fvgkz} for the vector $F_{\omega}(A)$ in the A-GKZ realization.

\begin{prop}
	\begin{equation}\label{fv}F_{\omega}(A)\in \bigoplus_{s\in\mathbb{Z}^k_{\geq 0}} V^g_{(\omega-sr)_{max}} \end{equation}
\end{prop}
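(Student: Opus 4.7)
The plan is to reduce the statement to the already-proved Proposition~\ref{fvgkz} via the $\mathfrak{g}_2$-equivariant substitution map
\[
\phi:Sol_{AGKZ}\longrightarrow Fun(G_2),\qquad A_X\longmapsto a_X.
\]
Because the action of $\mathfrak{g}_2$ on the independent variables $A_X$ in \eqref{edet2} is defined by the same rule as its action on the determinants $a_X$ in \eqref{edet1}, the map $\phi$ is $\mathfrak{g}_2$-equivariant. By the minimality argument used in the proof of Theorem~\ref{t2} (any kernel element $f$ would give $f(\partial/\partial A)f(A)=0$, forcing $f=0$), this map is an isomorphism of $\mathfrak{g}_2$-modules. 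In particular it is $g$-equivariant and transports the $g$-subrepresentations $V^g_{(\omega-sr)_{max}}$ from the functional realization to the corresponding submodules of $Sol_{AGKZ}$. Hence it suffices to establish the analogous inclusion for $\phi(F_\omega(A))$ in the functional realization.

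The key intermediate step is the identification
\[
\phi\bigl(F^{\mathfrak{g}_2}_\omega(A)\bigr)=\mathcal{F}^{\mathfrak{g}_2}_\omega(a).
\]
Writing out $F^{\mathfrak{g}_2}_\omega(A)$ using \eqref{ff} and \eqref{jf}, substitution of $A_X$ by $a_X$ produces a double sum with the alternating sign $(-1)^s/s!$ and rising-factorial factors $(t+1)\cdots(t+s)$. These combinatorial weights have been cooked up precisely so that the triple Plücker relations $a_{i,X}a_{j,y,X}-a_{j,X}a_{i,y,X}+a_{y,X}a_{i,j,X}=0$, which hold identically among the determinants on $G_2$, cause the $s>0$ terms to pair-cancel against shifts arising in the $s=0$ contribution, collapsing the expression into the ordinary $\Gamma$-series. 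This is the $\mathfrak{g}_2$-analogue of the collapse identity verified in \cite{a1} and \cite{a2} for the classical series; the extra lattice generators $u_\beta$ and $d_\gamma$ from \eqref{ub}, \eqref{wg} are handled by observing that the corresponding second-type operators in \eqref{agkzg2} coincide with those of the plain GKZ system \eqref{gkzg2}, so they contribute no additional Plücker-type cancellations.

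Having identified $\phi(F^{\mathfrak{g}_2}_\omega(A))=\mathcal{F}^{\mathfrak{g}_2}_\omega(a)$, Proposition~\ref{fvgkz} immediately gives
\[
\phi\bigl(F^{\mathfrak{g}_2}_\omega(A)\bigr)=\mathcal{F}^{\mathfrak{g}_2}_\omega(a)\in\bigoplus_{s\in\mathbb{Z}^k_{\geq 0}}V^g_{(\omega-sr)_{max}}
\]
in the functional realization. Applying the inverse of $\phi$, which is $g$-equivariant and sends $V^g_{(\omega-sr)_{max}}$ in $Fun(G_2)$ to the same-named submodule of $Sol_{AGKZ}$, yields \eqref{fv}.

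The main obstacle is the collapse identity $\phi(F^{\mathfrak{g}_2}_\omega(A))=\mathcal{F}^{\mathfrak{g}_2}_\omega(a)$: once this is in hand, the remainder is a formal transfer of Proposition~\ref{fvgkz}. The verification of the identity is a careful combinatorial bookkeeping, organised by grouping the monomials $a^{\omega+t^1v+t^2u+t^3d}$ according to the support of the shift $t^1v$ and then applying the triple Plücker relation one generator $v_\alpha$ at a time; the auxiliary vectors $r_\alpha$ from \eqref{ra} are exactly the difference of the two alternative expansions of $a^{\omega+v_\alpha}$ under this relation, which makes the telescoping explicit. All this proceeds verbatim as in the $\mathfrak{gl}_n$ and $\mathfrak{o}_n$ cases of \cite{a1}, \cite{a2}, the only new feature being that the Jacobi and $G_2$-specific relations \eqref{ds} leave the argument unaffected since they are linear and thus only modify the $s=0$ level of the expansion.
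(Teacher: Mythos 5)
Your reduction to Proposition~\ref{fvgkz} rests entirely on the ``collapse identity'' $F_{\omega}(A)\mid_{A\mapsto a}=\mathcal{F}_{\omega}(a)$, and that identity is false. Already for a single Plucker triple (one generator $v=e_{1}-e_{2}-e_{13}+e_{23}$, with $r=e_{3}-e_{2}-e_{13}+e_{12}$) take $\omega=e_{3}+e_{12}$: then $\mathcal{F}_{\omega}(A)=A_{3}A_{12}$, while
\begin{equation*}
F_{\omega}(A)=A_{3}A_{12}-J^{1}_{\omega-r}(A)=A_{3}A_{12}-A_{2}A_{13}-2A_{1}A_{23},
\end{equation*}
and after the substitution $A\mapsto a$ the relation $a_{1}a_{23}-a_{2}a_{13}+a_{3}a_{12}=0$ gives $F_{\omega}(a)=-3a_{1}a_{23}$, which is not equal to $\mathcal{F}_{\omega}(a)=a_{3}a_{12}=a_{2}a_{13}-a_{1}a_{23}$. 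The rising-factorial weights in \eqref{fgamma} are designed to make $F_{\omega}$ a solution of the antisymmetrized (A-GKZ) operator, not to telescope away on the group, and no such collapse is proved in \cite{a1}, \cite{a2}. What is true, and what would suffice for your transfer argument, is the weaker statement that $F_{\omega}(a)$ lies in the span of the $\mathcal{F}_{\omega-sr}(a)$ with $s\in\mathbb{Z}^{k}_{\geq 0}$ (in the example above $F_{\omega}(a)=\tfrac{3}{2}\mathcal{F}_{\omega}(a)-\tfrac{3}{2}\mathcal{F}_{\omega-r}(a)$); combined with Proposition~\ref{fvgkz} applied to each summand this still yields \eqref{fv}.

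The paper establishes precisely this weaker membership, by a mechanism you do not use: the explicit scalar product \eqref{skp}. The preimage in the solution space of $\mathcal{F}_{\delta}(a)$ is $\mathcal{F}_{\delta}(A)+pl(A)$ with $pl$ in the ideal of relations; pairing with $F_{\omega}$ kills the $pl(\partial/\partial A)F_{\omega}$ term because $F_{\omega}$ solves the A-GKZ system, and a comparison of supports shows $\langle\mathcal{F}_{\delta}(A),F_{\omega}(A)\rangle=0$ unless $\delta=\omega-sr$ modulo the lattice. You would need to supply an argument of this kind (or an honest proof of the span statement) in place of the collapse identity; as written, the central step of your proof is not merely unproved but wrong. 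A minor further point: this proposition concerns the restriction $\mathfrak{gl}_8\downarrow\mathfrak{gl}_3\oplus\mathfrak{gl}_3$, so your substitution map should land in functions on $GL_8$, not on $G_2$.
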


\begin{proof}

We will use the fact that in the A-GKZ realization, there is an explicitly written scalar product \eqref{skp}.

 Let's take the vector $\mathcal{F}_{\delta}(a)$ in the functional realization. The transition from the A-GKZ realization to the functional realization is performed by replacing $A_X\mapsto a_X$. Therefore, this vector in the A-GKZrealization is written as 
 $$
 \mathcal{F}_{\delta}(A)+pl(A),
 $$
 where $pl(A)$ belongs to the ideal of relations between determinants.

 Let us calculate the scalar product
	
	$$
	<\mathcal{F}_{\delta}(A)+pl(A), F_{\omega}(A)>=\big(\mathcal{F}_{\delta}(\frac{\partial   }{\partial  A})+pl(\frac{\partial  }{\partial  A}) \big)F_{\omega}(A)\mid_{A=0},
	$$
but since $F_{\omega}(A)$ is a solution of the A-GKZ system, then $pl(\frac{\partial }{\partial A}) F_{\omega}(A)=0$, so that
	
	$$
	<\mathcal{F}_{\delta}(A)+pl(A), F_{\omega}(A)>=<\mathcal{F}_{\delta}(A), F_{\omega}(A)>
	$$
	
From the consideration of the carriers of the functions (that is, the sets of exponents of the monomials involved in their expansions), it is clear that if the last scalar product is nonzero, then $\delta=\omega-sr$ $mod B_{GC}^{\mathfrak{gl}_8}$. 
 
 From this, we see that
 $F_{\omega}\in \bigoplus_{s\in \mathbb{Z}^k_{\geq 0}}<\mathcal{F}_{\omega-sr}>$. By comparing this ratio with Proposition \ref{fvgkz}, we obtain the desired statement.
	
\end{proof}

Let's prove the following statement.

\begin{prop}
 \label{pdbr}
 The representations on the right-hand side of formula \eqref{fv} are orthogonal with respect to the scalar product \eqref{skp} in the A-GKZ realization.\end{prop}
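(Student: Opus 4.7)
The plan is to deduce orthogonality from the $g$-invariance of the scalar product together with a Schur-type argument, so the real work is verifying that the summands appearing in \eqref{fv} are pairwise non-isomorphic as $g$-modules.

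First I would observe that the scalar product \eqref{skp} is $\mathfrak{g}_2$-invariant (this was noted just after Theorem \ref{t2}), and since $g=\mathfrak{gl}_3\oplus\mathfrak{gl}_3$ sits inside $\mathfrak{g}_2$ via the chain \eqref{cpk}, the product is automatically $g$-invariant. By construction each $V^g_{(\omega-sr)_{max}}$ is an irreducible $g$-module, generated by the highest vector $\mathcal{F}_{(\omega-sr)_{max}}$ of weight equal to the weight of the monomial $A^{(\omega-sr)_{max}}$. The standard consequence of Schur's lemma for invariant bilinear forms then gives that two such submodules pair to zero whenever they are not isomorphic as $g$-modules.

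It therefore suffices to show that, for $s\neq s'$ in $\mathbb{Z}^k_{\geq 0}$ such that the two summands $V^g_{(\omega-sr)_{max}}$ and $V^g_{(\omega-s'r)_{max}}$ are genuinely distinct subspaces in \eqref{fv}, the highest weights $(\omega-sr)_{max}$ and $(\omega-s'r)_{max}$ define different $g$-dominant weights. The key point is that, by the very definition of the maximization operation (Section \ref{mxm}), a vector $\gamma_{max}$ supported on $g$-maximal subsets is recovered from the associated $g$-highest weight $\sum_X \gamma_{max,X}\,\mathrm{wt}(A_X)$, because the weights $\mathrm{wt}(A_X)$ for $g$-maximal subsets $X$ are linearly independent over the Cartan of $g$ (they are the fundamental and exterior-fundamental weights of the two $\mathfrak{gl}_3$-blocks). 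Hence the map $\gamma_{max}\mapsto (\text{highest weight of }V^g_{\gamma_{max}})$ is injective, and distinct values of $(\omega-sr)_{max}$ yield non-isomorphic $g$-representations. In the degenerate case where two different indices $s,s'$ happen to give $(\omega-sr)_{max}=(\omega-s'r)_{max}$, the associated submodules coincide as subspaces, so the direct sum notation in \eqref{fv} is meant to range over distinct classes, and the orthogonality statement is vacuous for such a pair.

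The main obstacle will be verifying the injectivity claim cleanly: one must be sure that the shifts $-sr$ (whose individual entries $r_\alpha$ have the form \eqref{ra} and are themselves weight-neutral) do not accidentally produce collisions among the $g$-maximized vectors, and, conversely, that whenever such a collision does occur the two subspaces truly coincide rather than merely overlapping. I would handle this by reducing to the observation that each $V^g_{(\omega-sr)_{max}}$ is characterized as the $g$-submodule generated by the unique (up to scalar) highest weight vector $\mathcal{F}_{(\omega-sr)_{max}}$ among the $\Gamma$-series with the prescribed support class modulo $B_{GC}^{\mathfrak{gl}_8}$, so two indices produce the same submodule exactly when the maximized vectors agree. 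Combined with the Schur argument in the first paragraph this finishes the proof.
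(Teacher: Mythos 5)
Your first paragraph is in line with the paper: the paper also deduces orthogonality from the fact that distinct summands have distinct $g=\mathfrak{gl}_3\oplus\mathfrak{gl}_3$-highest weights, using self-adjointness of the Gelfand-Tsetlin subalgebra generators for the product \eqref{skp} (equivalently, your Schur-type argument for a contravariant form). The gap is in the step you yourself flag as the main obstacle: the claim that the $g$-weights of the variables $A_X$, as $X$ ranges over $g$-maximal subsets, are linearly independent, so that $\gamma_{max}$ is recovered from the highest weight. This is false. The indices $-1,1$ are not moved by either $\mathfrak{gl}_3$-block, so $X$ and $X\cup\{1\}$ (or $X\cup\{-1\}$) are both $g$-maximal with the same $g$-weight; likewise $e_{\{-4,-3\}}$ and $e_{\{-4\}}+e_{\{-3\}}$ are distinct maximized exponent vectors with equal $g$-weight. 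Hence the map $\gamma_{max}\mapsto(\text{highest weight of }V^g_{\gamma_{max}})$ is not injective, and your dichotomy "either the maximized vectors coincide (same subspace) or the highest weights differ (Schur gives orthogonality)" does not hold in general: one could a priori have distinct subspaces carrying isomorphic $g$-modules, for which an invariant pairing need not vanish. Your fallback remark (uniqueness of the highest vector among $\Gamma$-series with prescribed support class) only addresses when two indices give the same subspace, not the dangerous case of different maximized vectors with equal weights.

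What is actually needed, and what the paper proves, is the statement restricted to the specific family occurring in \eqref{fv}: using linearity of maximization, $(\omega-sr)_{max}=\omega_{max}-s\,r_{max}$, and then a case analysis of $r_{max}$ for $r$ of the form \eqref{ra} with respect to the order \eqref{chpor} (either $r_{max}=0$, or $r_{max}$ is again a vector of type $r$, or it has the explicit form \eqref{rmax}). A direct computation then shows that subtracting a nonzero $r_{max}$ subtracts $(-1,1,0)$ or $(0,-1,1)$ from the highest weight of exactly one of the two $\mathfrak{gl}_3$-blocks, so distinct shifts $s\,r_{max}\neq s'\,r_{max}$ produce distinct $g$-highest weights, while $s\,r_{max}=s'\,r_{max}$ gives literally the same summand. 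Without this (or some substitute) analysis of how the $r$-shifts interact with maximization, the orthogonality claim is not established; so as written the proposal has a genuine gap at its central step.
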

\begin{proof}
 
 We will prove that the representations $V^g_{(\omega-sr)_{max}}$ either coincide (if $(\omega-sr)_{max}=\omega_{max}$) or have different $g=\mathfrak{gl}_3\oplus \mathfrak{gl}_3$-highest weights.

This will be enough to prove the orthogonality. Indeed, the $\mathfrak{gl}_3\oplus \mathfrak{gl}_3$-highest weight is uniquely determined by the set of eigenvalues for the action of the Gelfand-Zetlin subalgebra constructed from the chain \eqref{cpk}\footnote{a subalgebra in $U(\mathfrak{gl}_8)$ generated by the centers of the universal enveloping algebras for all subalgebras in the chain \eqref{cpk}}. The generators of this subalgebra are self-adjoint for reasons similar to those in Section
 \ref{sug}. Therefore, if the $g=\mathfrak{gl}_3\oplus \mathfrak{gl}_3$-highest weights of the two terms in \eqref{fv} are different, then the sets of eigenvalues for the Gelfand-Zetlin subalgebra are different, and hence the terms are orthogonal.
	
	So, let's start proving the distinctness of the $g=\mathfrak{gl}_3\oplus \mathfrak{gl}_3$-highest weights. Note that $(\omega-sr)_{max}=(\omega)_{max}-r_{max}$. The vector $r$ has the form \eqref{ra}:
	
	\begin{equation}
	e_{y,X}-e_{j,X}-e_{i,y,X}+e_{i,j,X},\,\,\, i\prec j\prec y,
	\end{equation}
	
where the order $\prec$ was defined in \eqref{chpor}.
 Introduce a partial order $\prec'$ defined as follows:
	
	$$
	i\prec' j\Leftrightarrow i\prec j,\,\,\, i,j\in \{-4,2,3\}\text{ или  }i,j\in\{-3,-2,4\}.
	$$
	
	Then the following statements hold about $r_{max}$.
 
{\bf 1)} If $ j\prec' y$ (and what about $i$ is irrelevant), then $r_{max}=0$.
 
{\bf 2)} If all three indices are incomparable, then $r_{max}=r'$ is another vector of type $r$.
 
{\bf 3)}If it is not true that $j\prec'y$, but it is true that $i\prec'j$, then $
	r_{max}$  has the following description. Let the result of maximizing the set $X$ be the set of indices $X'$. Also let if after that we maximize $i$ or $j$ separately, then we get the index $t_1$ (since $i,j$ lie in one of the sets $\{-4,2,3\}$ or $\{-3,-2,4\}$, then they are maximized separately into the same index), and if simultaneously - then the indices $t_1$, $t_2$. Finally, let $y$ be maximized in $y'$. Then
	
	\begin{equation}
	\label{rmax}
	r_{max}=e_{y',X'}-e_{t_1,X'}-e_{t_1,y',X'}+e_{t_1,t_2,X'}
	\end{equation}

It is directly verified that adding vectors $r_{max}\neq 0$ changes the $\mathfrak{gl}_3\oplus \mathfrak{gl}_3$ highest weight. We will carry out this proof for $r_{max}$ from case {\bf 3)}, for $r_{max}=r'$ from case {\bf 1)}, the considerations are similar. 
 
 So we prove that the highest  weights of the vectors $\omega_{max}$ and $\omega_{max}-r_{max}$, where $r_{max}$ has the form \eqref{rmax}, are different. For definiteness, we assume that $t_1,t_2\in \{-4,2,3\}$. Then $y'\notin \{-4,2,3\}$. Due to the existence of the index set $t_1,t_2,X'$, $X'$ itself contains at most one element from $\{-4,2, 3\}$.
 
 The highest weight with respect to $\mathfrak{gl}_3\oplus \mathfrak{gl}_3$ is the union of the highest weights of these two direct summands. If $X'$ does not contain elements from $\{-4,2,3\}$, then when $r_{max}$ is subtracted, the highest weight with respect to the first summand $\mathfrak{gl}_3$ is subtracted by $(-1,1,0)$, and the highest weight with respect to the second summand $\mathfrak{gl}_3$ remains unchanged. If $X'$ contains exactly one element from $\{-4,2,3\}$, then when $r_{max}$ is subtracted from the highest weight with respect to the first term $\mathfrak{gl}_3$, $(0,-1,1)$ is subtracted, and the highest weight with respect to the second term $\mathfrak{gl}_3$ remains unchanged.

The proposition is proved.
	
\end{proof}

\begin{remark}
	We have actually proven that when onr subtracts the vector $r_{max}$ from $\gamma_{max}$ with the highest $\mathfrak{gl}_3\oplus \mathfrak{gl}_3$-weight, the following happens. Either it does not change, or the vector $(-1,1,0)$ or $(0,-1,1)$ is subtracted from one of its halves.
\end{remark}




\subsection{Formula of type \eqref{fv} for $\mathfrak{g}_2$}

Introduce the maximization procedure with respect to the considered subalgebra $\mathfrak{sl}_3\subset \mathfrak{g}_2$. This is done exactly as in section \ref{mxm} for $\mathfrak{gl}_3\oplus \mathfrak{gl}_3$. These two procedures are actually the same. This directly follows from the formulas \eqref{sl3g2} for the positive root generators of the subalgebra $\mathfrak{sl}_3\subset \mathfrak{g}_2$ under consideration. Therefore, we will continue to use the notation $max$ for the maximization procedure over $\mathfrak{gl}_3\oplus \mathfrak{gl}_3$ or, equivalently, over $\mathfrak{sl}_3$.

There is an analogue of Proposition \ref{mxmf} (the proof is repeated literally).

Next, let $V^{\mathfrak{sl}_3}_{(\gamma-sr)_{max}}$ be the $\mathfrak{sl}_3$ representation in the functional realization generated by $\mathcal{F}^{\mathfrak{g}_2}_{\gamma_{max}}(a)$.
There is an analogue of formula \eqref{fvff} for the chain $\mathfrak{g}_2\supset \mathfrak{sl}_3$.

\begin{lemma}
	\begin{equation}
	\label{lm}
	\mathcal{F}^{\mathfrak{g}_2}_{\gamma}(A)\in \bigoplus_{s\in \mathbb{Z}^k_{\geq 0}} V^{\mathfrak{sl}_3}_{(\gamma-sr)_{max}}
	\end{equation}
\end{lemma}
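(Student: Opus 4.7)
The strategy is to reduce the $\mathfrak{g}_2$-statement to its $\mathfrak{gl}_8$-counterpart (Proposition \ref{fvgkz}) by separating the sum defining $\mathcal{F}^{\mathfrak{g}_2}_\gamma$ into contributions from the three families of lattice generators, and then to exploit the fact that the $\mathfrak{sl}_3$-maximization literally coincides with the $\mathfrak{gl}_3\oplus\mathfrak{gl}_3$-maximization, as observed in the paragraph preceding the lemma.

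First, I would group the triple sum \eqref{jf}-style so that the summation over the $\mathfrak{gl}_8$-generators $v_\alpha$ stays inside and the sums over $u_\beta$ and $d_\gamma$ are pulled out. This writes
\begin{equation*}
\mathcal{F}^{\mathfrak{g}_2}_\gamma(a) \,=\, \sum_{t^2 \in \mathbb{Z}^{k_2},\, t^3 \in \mathbb{Z}^{k_3}} \Big(\prod_{\kappa} (\pm_\kappa 1)^{t^2_\kappa}\Big)\, \mathcal{F}_{\gamma + t^2 u + t^3 d}(a),
\end{equation*}
where each $\mathcal{F}_{\gamma + t^2 u + t^3 d}(a)$ is an ordinary $\mathfrak{gl}_8$-$\Gamma$-series. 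Applying Proposition \ref{fvgkz} to each inner series places it inside $\bigoplus_s V^g_{(\gamma + t^2 u + t^3 d - sr)_{max}}$ with $g=\mathfrak{gl}_3\oplus\mathfrak{gl}_3$, so $\mathcal{F}^{\mathfrak{g}_2}_\gamma(a)$ lies in the total sum of these spaces.

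Second, since $\mathfrak{sl}_3$-maximization equals $g$-maximization, each $V^g_{(\gamma + t^2 u + t^3 d - sr)_{max}}$ contains the $\mathfrak{sl}_3$-irreducible $V^{\mathfrak{sl}_3}_{(\gamma + t^2 u + t^3 d - sr)_{max}}$ generated by the same highest vector $\mathcal{F}_{(\gamma + t^2 u + t^3 d - sr)_{max}}(a)$. The required inclusion will follow once I show that for any $t^2,t^3,s$ there exists $s'\in \mathbb{Z}^k_{\geq 0}$ with $(\gamma + t^2 u + t^3 d - sr)_{max} = (\gamma - s'r)_{max}$ as weight-lattice vectors, so that the representations on both sides coincide. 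By linearity of the maximization operation, this reduces to studying the individual contributions $(u_\beta)_{max}$ and $(d_\gamma)_{max}$ and comparing them with the $(r_\alpha)_{max}$-list produced in the remark after Proposition \ref{pdbr}. The expectation, based on \eqref{chpor} and the explicit shapes \eqref{ub}, \eqref{wg}, is that each $(u_\beta)_{max}$ and $(d_\gamma)_{max}$ either vanishes or is a $\mathbb{Z}$-combination of $(r_\alpha)_{max}$'s, modulo vectors that leave the $\mathfrak{sl}_3$-highest weight unchanged.

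The main obstacle is exactly this last step. The Jacobi-type generator $u_\beta = e_X - e_{\widehat{-X}}$ pairs an index set with its $SO_8$-dual, and these two sets are generically incomparable with respect to $\prec$, so $(u_\beta)_{max}$ has no a priori simple form; similarly, $d_\gamma$ involves the whole collection $\omega_X$ of octonion structure constants. Turning the intuition \emph{"these extra shifts only permute within an $\mathfrak{sl}_3$-isotypic component of $V^g$"} into a rigorous identity between maximized vectors requires a careful case-by-case analysis based on the subset structure and the position of indices with respect to the two $\mathfrak{gl}_3$-summands of the chain \eqref{cpk}. Once that bookkeeping is carried out, linearity of maximization assembles the pointwise identities into the claimed inclusion and the lemma follows.
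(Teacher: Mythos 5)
There is a genuine gap, and it sits exactly where you locate your ``main obstacle''; unfortunately that obstacle is the actual content of the lemma, and the identity you expect to close it is not the right one. The decomposition $\mathcal{F}^{\mathfrak{g}_2}_{\gamma}(a)=\sum_{t^2,t^3}\big(\prod_{\kappa}(\pm_{\kappa}1)^{t^2_{\kappa}}\big)\mathcal{F}_{\gamma+t^2u+t^3d}(a)$ and the application of Proposition \ref{fvgkz} to each term are fine, but the shifts $u_{\beta}$ of \eqref{ub} and $d_{\gamma}$ of \eqref{wg} are built from $\mathfrak{o}_8$- and $\mathfrak{g}_2$-invariant relations, so they are \emph{fixed} by the maximization: $w_{max}=w$ for every $w$ in their span $W$ (this is precisely what the paper proves and uses later, in Proposition \ref{pro5}). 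They neither vanish nor reduce to combinations of the $(r_{\alpha})_{max}$, so $(\gamma+w-sr)_{max}=\gamma_{max}+w-sr_{max}$ is \emph{not} equal to any $(\gamma-s'r)_{max}$ when $w\neq 0$, and the subrepresentations $V^g$ indexed by these vectors do not coincide; all that is true is that adding $w$ leaves the $\mathfrak{sl}_3$-highest weight unchanged. Equality of $\mathfrak{sl}_3$-highest weights does not identify subspaces: the spaces $V^g_{\gamma_{max}+w-sr_{max}}$ for different $w$ carry the same $\mathfrak{sl}_3$-highest weight, so the relevant isotypic component occurs with multiplicity, and membership of $\mathcal{F}^{\mathfrak{g}_2}_{\gamma}$ in the sum of these larger $\mathfrak{gl}_3\oplus\mathfrak{gl}_3$-modules (generated by the $\mathfrak{gl}_8$-series $\mathcal{F}_{\delta_{max}}$) does not imply membership in $\bigoplus_s V^{\mathfrak{sl}_3}_{(\gamma-sr)_{max}}$, which by definition consists of the $\mathfrak{sl}_3$-modules generated by the $\mathfrak{g}_2$-series $\mathcal{F}^{\mathfrak{g}_2}_{(\gamma-sr)_{max}}$. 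Note also that your phrase ``the $\mathfrak{sl}_3$-irreducible generated by the same highest vector $\mathcal{F}_{(\ldots)_{max}}(a)$'' quietly replaces the generator $\mathcal{F}^{\mathfrak{g}_2}$ by $\mathcal{F}$; the actual relation between the two families is an inclusion of the $\mathfrak{g}_2$-objects into sums of $\mathfrak{gl}_8$-objects (cf. \eqref{vvv}), which points in the wrong direction for your argument.

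The paper's proof proceeds differently and avoids this multiplicity problem: it first establishes the $\mathfrak{g}_2$-analogue \eqref{eqdfg2} of the action formula \eqref{efd} (via an analogue of the Basic Lemma of \cite{a1}), i.e.\ a description of how elements of $\mathfrak{g}_2$, hence of $\mathfrak{sl}_3$, act directly on the series $\mathcal{F}^{\mathfrak{g}_2}_{\gamma}$, and then repeats the derivation of \eqref{fvff} verbatim at the level of the $\mathcal{F}^{\mathfrak{g}_2}$-series, never decomposing them into $\mathfrak{gl}_8$-series. If you wish to keep your decomposition, you would still need such an action formula (or some substitute) to pin down which copy of each $\mathfrak{sl}_3$-isotypic component inside $\bigoplus_w V^g_{\gamma_{max}+w-sr_{max}}$ actually contains $\mathcal{F}^{\mathfrak{g}_2}_{\gamma}$; the ``bookkeeping'' you defer is not a finite check on maximized generators but the substance of the statement.
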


\begin{proof}

	First, we prove an analogue of formula \eqref{efd}. Specifically, if an element $X\in\mathfrak{g}_2$ is represented as $Z=\sum c_{i,j}E_{i,j}$, then
	
	\begin{equation}
	\label{eqdfg2}
	Z\mathcal{F}^{\mathfrak{g}_2}(a)=\sum c_{i,j}\sum_{X}\sum_{s\in \mathbb{Z}^k_{\geq 0}}c_{X,s}\mathcal{F}^{\mathfrak{g}_2}_{\gamma-e_{j,X}+e_{i,X}+sr}.
	\end{equation}
	
The proof of formula \eqref{efd} in \cite{a1} is based on the Basic Lemma (see Lemma 6.2 in \cite{a1}). An analogue of the Basic Lemma also holds in the case of $\mathfrak{g}_2$, and its proof in this case is identical to the proof of Lemma 6.2 in \cite{a1}. After that, the derivation of \eqref{eqdfg2} is identical to the derivation of \eqref{efd} in \cite{a1} (see the derivation of formula (6.7) in \cite{a1}). 
 
 Once we have formula \eqref{eqdfg2}, the proof of \eqref{lm} is identical to the derivation of \eqref{fvff}.
	
\end{proof}

\begin{prop}
	\label{pro5}
The representations on the right-hand side of formula \eqref{lm} are orthogonal with respect to the scalar product \eqref{skp} in the A-GKZ realization.
\end{prop}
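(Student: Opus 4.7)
The plan is to mimic the proof of Proposition~\ref{pdbr}, replacing the $\mathfrak{gl}_3\oplus\mathfrak{gl}_3$-highest weight by the $\mathfrak{sl}_3$-highest weight. Three ingredients are needed: (i) the generators of $Z(U(\mathfrak{sl}_3))$ act self-adjointly with respect to~\eqref{skp}; (ii) for any two non-coinciding summands on the right-hand side of~\eqref{lm}, the associated $\mathfrak{sl}_3$-highest weights are distinct; (iii) eigenspaces of self-adjoint operators with distinct eigenvalues are orthogonal.

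For (i), a direct calculation with $\langle A_X,A_Y\rangle$ using the rule~\eqref{edet2} and the definition~\eqref{skp} gives $E_{i,j}^*=E_{j,i}$, whence $F_{i,j}^*=F_{j,i}$. Because the diagram automorphism $\psi$ commutes with this conjugation, the averaged operators satisfy $D_{i,j}^*=D_{j,i}$. The Casimirs $C^{\mathfrak{sl}_3}_{2}$ and $C^{\mathfrak{sl}_3}_{3}$ from Section~\ref{sug} are cyclic sums in the $D$'s that are invariant under the transpose $D_{i,j}\leftrightarrow D_{j,i}$ (the cyclic structure absorbs the order reversal produced by taking adjoints), so they act self-adjointly on the A-GKZ model.

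For (ii), the $\mathfrak{sl}_3$-maximization coincides with the $\mathfrak{gl}_3\oplus\mathfrak{gl}_3$-maximization, as recorded immediately before the statement of Proposition~\ref{pro5}. Hence the remark following Proposition~\ref{pdbr} still applies: the shift $\gamma_{max}-(\gamma-sr)_{max}$ projects on each of the two $\mathfrak{gl}_3$-halves to a non-negative integer combination of the positive roots $(1,-1,0)$ and $(0,-1,1)$, and equals zero precisely when the two summands coincide. It remains to check that every nonzero such shift changes the $\mathfrak{sl}_3$-highest weight. I would do this by evaluating the Cartan elements $H_1=[D_{-3,-2},D_{-2,-3}]$ and $H_2=[D_{-4,2},D_{2,-4}]$ on the admissible shift patterns: a direct computation shows that each elementary shift $(1,-1,0)$ or $(0,-1,1)$ on either half pairs nontrivially with at least one of $H_1$, $H_2$, and sums of such elementary shifts cannot accidentally vanish because they lie in the dominant cone generated by the two simple positive roots of $\mathfrak{sl}_3$.

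The main obstacle is precisely step~(ii): ruling out cancellations between the two $\mathfrak{gl}_3$-halves when their contributions are projected to the two-dimensional Cartan of $\mathfrak{sl}_3\subset\mathfrak{g}_2$. Once this cone-positivity check is made precise, the Casimirs $C^{\mathfrak{sl}_3}_2$ and $C^{\mathfrak{sl}_3}_3$ take distinct scalar values on $V^{\mathfrak{sl}_3}_{(\gamma-sr)_{max}}$ with distinct $\mathfrak{sl}_3$-highest weights, and the orthogonality statement follows verbatim from~(i) and~(iii) as in the proof of Proposition~\ref{pdbr}.
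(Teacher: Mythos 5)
Your proposal follows essentially the same route as the paper: reduce orthogonality to distinctness of the $\mathfrak{sl}_3$-highest weights of the summands, using the coincidence of $\mathfrak{sl}_3$- and $\mathfrak{gl}_3\oplus\mathfrak{gl}_3$-maximization together with the weight-shift analysis from Proposition \ref{pdbr}, and then invoke self-adjointness of the $\mathfrak{sl}_3$ Casimir operators exactly as the paper does. The only ingredient the paper makes explicit that you pass over is the decomposition $\mathcal{F}^{\mathfrak{g}_2}_{\gamma}=\sum_{w\in W}\pm\,\mathcal{F}_{\gamma+w}$ with $w_{max}=w$ (formula \eqref{fgg2} and the inclusion \eqref{vvv}), which is what guarantees that the extra lattice generators $u_{\beta}$, $d_{\gamma}$ are $\mathfrak{sl}_3$-weight-neutral and hence that the $\mathfrak{gl}_8$-level weight bookkeeping transfers to the $\mathfrak{g}_2$-series; your cone-positivity computation with $H_1$, $H_2$ is precisely the step the paper dismisses as ``easily verified by explicit calculations.''
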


\begin{proof}Note that from the explicit formula for $\mathcal{F}_{\gamma}^{\mathfrak{g}_2}$, we obtain
	
	\begin{equation}
		\label{fgg2}
	\mathcal{F}_{\gamma}^{\mathfrak{g}_2}=\sum_{w\in W} \pm \mathcal{F}_{\gamma+w},
	\end{equation}
	
where $W$ is the linear span of the second and third types of generators of the lattice $B_{GC}^{\mathfrak{g}_2}$, that is, in fact, $W=B_{GC}^{\mathfrak{g}_2}/B_{GC}^{\mathfrak{gl}_8}$. The second and third types of generators are constructed using the second and third types of equations in the A-GKZ system for the algebra $\mathfrak{g}_2$. These equations are based on relations that are invariant under the action of $\mathfrak{g}_2$. Indeed, the second-type relations are invariant even under the action of the algebra $\mathfrak{o}_8$, and the third-type relations are directly constructed from the invariants of the algebra $\mathfrak{g}_2$. It follows that for $w\in W$, we have $w_{max}=w$.

It follows that $\mathcal{F}_{\gamma_{max}+w}^{}=\mathcal{F}_{(\gamma+w)_{max}}^{}$. Now, using \eqref{fgg2}, the inclusion $\mathfrak{sl}_3\subset g$, and the definition of the representations $V_{\gamma_{max}}^g$ and $V_{\gamma_{max}}^{\mathfrak{g}_2}$, we have

	\begin{equation}
	\label{vvv}
	V_{\gamma_{max}}^{\mathfrak{g}_2}\subset \bigoplus_{w\in W}
	V_{\gamma_{max}+w}^{g}.
	\end{equation}
	
Let's check that the representations $V_{(\gamma-sr)_{max}}^{\mathfrak{g}_2}=V_{\gamma_{max}-sr_{max}}^{\mathfrak{g}_2}$ and $V_{(\gamma_{}-s'r)_{max}}^{\mathfrak{g}_2}=V_{\gamma_{max}-s'r_{max}}^{\mathfrak{g}_2}$ have different $\mathfrak{sl}_3$-highest weights when $s\neq s'$.
 Indeed, the first representation is contained in \begin{equation}\label{ur1}\bigoplus_{w\in W}
 V_{\gamma_{max}-sr_{max}+w}^{g},\end{equation} and the second is contained in \begin{equation}\label{ur2}\bigoplus_{w\in W}
 V_{\gamma_{max}-s'r_{max}+w}^{g}.\end{equation}
	
Consider the $\mathfrak{gl}_3\oplus \mathfrak{gl}_3$-highest weights of the individual terms in these sums. In the proof of Proposition \ref{pdbr}, it was shown that when subtracting vectors of the form $r_{max}\neq 0$, the highest weight with respect to $\mathfrak{gl}_3\oplus \mathfrak{gl}_3$ changes in such a way that the highest weight changes only with respect to one of the terms $\mathfrak{gl}_3$, while it remains unchanged with respect to the other term. At the same time, the corresponding $ \mathfrak{gl}_3$-highest weight changes in the following way: the vector $(-1,1,0)$ or $(0,-1,1)$ is subtracted from it. It can be easily verified by explicit calculations that the $\mathfrak{sl}_3$-highest weight must also change. Moreover, when subtracting $sr_{max}$ and $s'r_{max}$, if the condition $sr_{max} \neq s'r_{max}$ is satisfied, the resulting $\mathfrak{sl}_3$-highest weights are different. This means that $V_{\gamma_{max}-sr_{max}+w}^{g}$ and $V_{\gamma_{max}-s'r_{max}+w}^{g}$ have {\it different} $\mathfrak{sl}_3$-highest weights.
	
Next, we note that adding the vector $w$ does not change the $\mathfrak{sl}_3$-highest weights. This means that all terms in each of the sums \eqref{ur1} and \eqref{ur2} have the same $\mathfrak{sl}_3$-highest weight. So, the $\mathfrak{sl}_3$-higher weights $V_{\gamma_{max}-sr_{max}}^{\mathfrak{g}_2}$ and $V_{\gamma_{max}-s'r_{max}}^{\mathfrak{g}_2}$ are {\it different}.

But then the different terms in \eqref{lm} are orthogonal as eigenspaces corresponding to different eigenvalues for the self-adjoint Casimir operator.

\end{proof}

The same arguments as in the proof of \eqref{fv} give the equality

\begin{equation}
	\label{ffff}
F^{\mathfrak{g}_2}_{\gamma}(A)\in\bigoplus_{s\in \mathbb{Z}_{\geq 0}^k} V^{\mathfrak{sl}_3}_{(\gamma-sr)_{max}+w},
\end{equation}

\endproof

\subsection{Relation of the basis $F_{\gamma}^{\mathfrak{g}_2}(A)$ to the Gelfand-Tsetlin type basis for $\mathfrak{g}_2$}

Proposition \ref{pro5} says that the representations on the right in formula \eqref{ffff} are orthogonal with respect to the scalar product \eqref{skp} in the A-GKZ realization

Consider the order 

\begin{equation}
\label{por}
\gamma\prec \delta \Leftrightarrow \gamma+sr=\delta \,\,\, modB_{GC}^{\mathfrak{g}_2}.
\end{equation} From the formula \eqref{ffff} and the fact that the terms on the right-hand side of this formula are orthogonal, we obtain the following statement.

\begin{theorem}
 The orthogonalization with respect to the order \eqref{por} of the basis $F_{\gamma}^{\mathfrak{g}_2}(A)$ is the Gelfand-Tsetlin basis for the chain $\mathfrak{g}_2\supset \mathfrak{sl}_3$.
\end{theorem}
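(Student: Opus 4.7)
The Gelfand-Tsetlin basis for the chain $\mathfrak{g}_2 \supset \mathfrak{sl}_3$ is characterized as the basis of simultaneous eigenvectors of the subalgebra $GT \subset U(\mathfrak{g}_2)$ generated by $Z(U(\mathfrak{g}_2))$ and $Z(U(\mathfrak{sl}_3))$. Since Theorem~\ref{t2} shows that each $F_{\gamma}^{\mathfrak{g}_2}(A)$ already lies in a single irreducible $\mathfrak{g}_2$-summand (determined by the homogeneous degrees in the variables $A_X$), and since orthogonalization preserves these degrees, every orthogonalized vector automatically sits in a single $\mathfrak{g}_2$-irreducible and is thereby an eigenvector of $Z(U(\mathfrak{g}_2))$. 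The task therefore reduces to showing that each orthogonalized $\tilde F_{\gamma}^{\mathfrak{g}_2}(A)$ lies in a single $\mathfrak{sl}_3$-irreducible subrepresentation; by the self-adjointness of the relevant Casimirs (Section~\ref{sug}), this makes it an eigenvector of $Z(U(\mathfrak{sl}_3))$ as well.

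The plan is to prove by induction along the order \eqref{por} that $\tilde F_{\gamma}^{\mathfrak{g}_2} \in V^{\mathfrak{sl}_3}_{\gamma_{max}+w}$, i.e.\ that orthogonalization isolates the ``$s=0$ summand'' of the decomposition \eqref{ffff}. The starting point is \eqref{ffff}, which expresses $F_{\gamma}^{\mathfrak{g}_2} = \sum_{s \in \mathbb{Z}_{\geq 0}^k} x_s$ with $x_s \in V^{\mathfrak{sl}_3}_{(\gamma - sr)_{max}+w}$; Proposition~\ref{pro5} ensures that these $\mathfrak{sl}_3$-subrepresentations are pairwise orthogonal and have pairwise distinct $\mathfrak{sl}_3$-highest weights. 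The base case is immediate: if $\gamma$ is minimal with respect to \eqref{por}, then no $\gamma - sr$ with $s \neq 0$ is a Gelfand-Tsetlin diagram, the decomposition \eqref{ffff} collapses to the single term $x_0$, and $\tilde F_{\gamma}^{\mathfrak{g}_2} = F_{\gamma}^{\mathfrak{g}_2} = x_0$.

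For the inductive step I would note that the Gram-Schmidt procedure subtracts from $F_{\gamma}^{\mathfrak{g}_2}$ its projections onto the previously constructed $\tilde F_{\gamma - sr}^{\mathfrak{g}_2}$ for $s \neq 0$. By the inductive hypothesis, each such vector lies in $V^{\mathfrak{sl}_3}_{(\gamma - sr)_{max}+w}$, and the orthogonality of distinct summands in \eqref{ffff} guarantees that the projection onto $\tilde F_{\gamma - sr}^{\mathfrak{g}_2}$ can only affect the $x_s$ component of $F_{\gamma}^{\mathfrak{g}_2}$. After removing all such contributions, what remains is $x_0 \in V^{\mathfrak{sl}_3}_{\gamma_{max}+w}$, a single $\mathfrak{sl}_3$-irreducible, as required.

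The main obstacle I expect is making rigorous the claim that, for each $s$, the Gram-Schmidt step exactly cancels the full component $x_s$ rather than only a part of it; this amounts to showing that $x_s$ and $\tilde F_{\gamma - sr}^{\mathfrak{g}_2}$ are proportional inside the irreducible $V^{\mathfrak{sl}_3}_{(\gamma - sr)_{max}+w}$. My plan for this is to identify both vectors through their distinguished monomials in the $A_X$ (using the explicit $\Gamma$-series formula \eqref{ff} for $F_{\gamma}^{\mathfrak{g}_2}$ and an analogous expansion of $x_s$), and then appeal to the uniqueness, within a single $\mathfrak{sl}_3$-irreducible, of a vector with a prescribed leading term. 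Well-foundedness of \eqref{por}, which follows from the finiteness of the set of Gelfand-Tsetlin diagrams contributing to a fixed irreducible $\mathfrak{g}_2$-representation, then closes the induction.
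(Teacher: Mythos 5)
Your proposal follows essentially the same route as the paper: the published proof consists precisely of combining the decomposition \eqref{ffff} with the orthogonality of its summands (Proposition \ref{pro5}) and the order \eqref{por}, which is exactly the skeleton of your argument. The inductive Gram--Schmidt bookkeeping, and the cancellation issue you flag as the main obstacle, are details the paper does not spell out at all, so your attempt is, if anything, more explicit than the paper's own two-sentence derivation.
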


\end{document}